\documentclass[11pt]{amsart}
\usepackage[utf8]{inputenc}
\usepackage[T1]{fontenc}
\usepackage{lmodern}
\usepackage{amsmath,amssymb,amsthm,mathtools}
\usepackage[margin=1in]{geometry}
\usepackage[expansion=false]{microtype}
\usepackage{enumitem}
\usepackage{graphicx}
\usepackage{xcolor}
\usepackage[colorlinks=true,allcolors=black]{hyperref}

\newcommand{\T}{\mathbb{T}}
\newcommand{\R}{\mathbb{R}}

\newcommand{\Z}{\mathbb{Z}}
\newcommand{\N}{\mathbb{N}}
\newcommand{\defeq}{\coloneqq}
\DeclareMathOperator{\supp}{supp}
\DeclareMathOperator{\sgn}{sgn}
\DeclareMathOperator{\id}{id}
\DeclareMathOperator{\dist}{dist}

\theoremstyle{plain}
\newtheorem{theorem}{Theorem}
\newtheorem{lemma}{Lemma}[section]
\newtheorem{proposition}[lemma]{Proposition}
\theoremstyle{definition}
\newtheorem{definition}[lemma]{Definition}
\theoremstyle{remark}
\newtheorem{remark}{Remark}
\newtheorem{corollary}{Corollary}

\numberwithin{equation}{section}
\setlist[enumerate]{leftmargin=2em,label=\textup{(\roman*)},itemsep=3pt,topsep=5pt}
\allowdisplaybreaks[1]
\title[On $C^2$ circle diffeomorphisms with rotation numbers of constant type]
{On $C^2$ circle diffeomorphisms with rotation numbers of constant type}
\author{Junhao Li}
\address{School of Mathematics and Statistics, Beijing Institute of Technology, Beijing 100081, China}
\email{junhaoli@bit.edu.cn}
\author{Lin Wang}
\address{School of Mathematics and Statistics, Beijing Institute of Technology, Beijing 100081, China}
\email{lwang@bit.edu.cn}
\subjclass[2020]{Primary 37E10; Secondary 37C15, 42A45, 46E35}
\keywords{Circle diffeomorphisms, rotation numbers of constant type, Besov spaces, weak Dini continuity}

\begin{document}

\begin{abstract}
For every irrational rotation number $\alpha$ of constant type and every $q>1$, we construct a $C^2$ circle diffeomorphism $F$ arbitrarily close to the rigid rotation $R_\alpha$ in the combined $C^2$ and $B^2_{2,q}$ topology whose normalized conjugacy is not Lipschitz. We also prove that $F''\in B^0_{2,1}$ implies global $C^1$ conjugacy. Thus the thresholds in the Besov summability and weak Dini scales are $q=1$ and $\gamma=1/2$, respectively. These results complement Herman's Baire category theorem on the residual existence of $C^2$ diffeomorphisms without $C^1$ conjugacy by providing quantitative counterexamples and sufficient conditions for differentiable conjugacy.
\end{abstract}
\maketitle

\setcounter{tocdepth}{2}
\begingroup
\fontsize{9}{11}\selectfont
\setlength{\linespacing}{11pt}
\color{black}
\hypersetup{linkcolor=black,linktoc=all}
\tableofcontents
\endgroup
\hypersetup{linkcolor=black}
\clearpage

\section{Introduction}\label{sec:introduction}

Let $\T\defeq\R/\Z$ be equipped with normalized Lebesgue measure. For an integer $r\ge1$, let $\operatorname{Diff}_+^r(\T)$ denote the group of orientation-preserving $C^r$ circle diffeomorphisms, and let
\[
\mathcal D^r(\T)\defeq\{F\in\operatorname{Diff}_+^r(\R):F(x+1)=F(x)+1\}
\]
denote the space of their orientation-preserving lifts to the real line. Write $\id(x)=x$. For $F \in \mathcal{D}^r(\T)$, denote its inverse by $F^{-1}$ and its $n$th iterate by $F^n$, with $F^0 = \id$. The rotation number of $F$ is
\[
\rho(F)\defeq\lim_{n\to\infty}\frac{F^n(x)-x}{n}\in\R,
\]
where the limit is independent of $x$. Write $R_\alpha(x)\defeq x+\alpha$ for the rigid rotation, and denote the class of lifts with fixed rotation number $\alpha$ by
\[
\mathcal F_\alpha^r\defeq\{F\in\mathcal D^r(\T):\rho(F)=\alpha\}.
\]
We also use $\mathcal D^0(\T)$ for the space of increasing homeomorphisms $F:\R\to\R$ satisfying $F(x+1)=F(x)+1$. Superscripts $\infty$ and $\omega$ denote smooth and real-analytic regularity, respectively, and noninteger regularity exponents are interpreted in the H\"older sense. The definition of $\mathcal F_\alpha^r$ extends to these classes by the same rotation-number condition.

The induced rotation on $\T$ is also denoted by $R_\alpha$, and we write $\|x\|\defeq\dist(x,\Z)$.

\begin{definition}\label{def:constant-type}
If there exists a constant $c>0$ such that an irrational number $\alpha$ satisfies
\begin{equation}\label{eq:certificate}
\|k\alpha\|\ge c/|k|\qquad(k\in\Z\setminus\{0\}),
\end{equation}
then $\alpha$ is said to be of \emph{constant type}.
\end{definition}
Fix $\alpha$ and a constant $c$ satisfying~\eqref{eq:certificate}. We call $c$ a \emph{Markov bound} for $\alpha$. Hurwitz's theorem gives $c\le1/\sqrt5<1/2$; see \cite{Sch80}.

\subsection{Background and motivation}\label{subsec:history}

The conjugacy theory of circle homeomorphisms originates with Poincar\'e and Denjoy. For every irrational rotation number $\alpha$, Denjoy's theorem gives a topological conjugacy between each $F\in\mathcal F_\alpha^2$ and $R_\alpha$~\cite{Den32}. Denjoy also constructed $C^1$ circle diffeomorphisms with irrational rotation number that are not topologically conjugate to a rotation~\cite[Proposition 12.2.1]{KH95}. The differentiability of the conjugacy depends on the regularity of the map and the arithmetic properties of the rotation number.

For rotation numbers of constant type, Katznelson and Ornstein~\cite{KO89} proved that the conjugacy associated with every $F\in\mathcal F_\alpha^2$ is absolutely continuous. Herman asked whether every such map admits a $C^1$ conjugacy to the rotation~\cite[p.~161, Problem 4.4]{Her79}. Using an idea of Meyer~\cite{Mey83}, Herman applied the Baire category theorem to obtain a dense residual set of maps in $\mathcal F_\alpha^2$ admitting no $C^1$ conjugacy to $R_\alpha$; these notes were published in~\cite{Her18}.

Sufficient conditions for differentiable conjugacy have also been formulated in terms of H\"older, Besov, and Dini regularity. Herman~\cite[Proposition VI.5.1]{Her86} established local $C^{1+\varepsilon}$ conjugacy under a smallness condition in $B^{2+\varepsilon}_{p,\infty}$, for $p>1$ and $0<\varepsilon<1$, with the smallness threshold depending on the Markov bound and the regularity parameters. Khanin and Teplinsky~\cite{KT09} proved global $C^{1+\varepsilon}$ conjugacy for $C^{2+\varepsilon}$ maps, where $0<\varepsilon<1$. In this range, $C^{2+\varepsilon}=B^{2+\varepsilon}_{\infty,\infty}$ with equivalent norms. Tong, Xiao, and Li~\cite{TXL} proved global $C^1$ conjugacy when $F''$ satisfies the classical Dini condition and obtained estimates for the modulus of continuity of the derivative of the conjugacy.

We study sufficient conditions and counterexamples in terms of the modulus of continuity of $F''$ and the summability of its dyadic Fourier blocks.

\smallskip
\noindent\textbf{Question.} {\it Which conditions in these two scales ensure global $C^1$ conjugacy for $C^2$ circle diffeomorphisms with rotation numbers of constant type?}
\smallskip

Our results give a global sufficient condition and identify the thresholds in these two scales through counterexamples. We first introduce the function spaces and metrics used in their statements.

\subsection{Function spaces and metrics}\label{subsec:spaces-metrics}

Throughout this paper, we set $\N\defeq\{1,2,\ldots\}$ and $\N_0\defeq\N\cup\{0\}$. All function norms are understood to be defined on $\T$. For an integer $r\ge0$, write $D \defeq d/dx$ and define the $C^r$ norm by $\|u\|_{C^r} \defeq \max_{0\le j\le r}\|D^j u\|_\infty$; for a noninteger exponent $r+\sigma$, where $0<\sigma<1$, the notation $C^{r+\sigma}$ denotes the classical H\"older space. For iterates, we write $DF^n \defeq D(F^n)$. We abbreviate the Lebesgue norm as $\|w\|_p \defeq \|w\|_{L^p(\T)}$ and equip the periodic Sobolev space $W^{1,p}(\T)$ with the norm $\|w\|_{W^{1,p}} \defeq \|w\|_p + \|Dw\|_p$.

Unless explicitly stated to be absolute, $C$ denotes a generic positive constant depending on the Markov bound and the choice of norms, and may vary from one occurrence to the next. We write $X=O(Y)$ if $|X|\le CY$, and $X\asymp Y$ if both $X=O(Y)$ and $Y=O(X)$ hold. The notation $X\hookrightarrow Y$ denotes a continuous embedding.

Let $\mathcal{D}'(\T)$ denote the space of periodic distributions, that is, the dual of the space $C^\infty(\T)$ of smooth test functions, with duality pairing denoted by $\langle w,\phi\rangle$. For $w\in L^1(\T)$, we use the convention $\langle w,\phi\rangle \defeq \int_\T w\phi$. The weak derivative of a distribution is defined by $\langle Dw,\phi\rangle \defeq -\langle w,\phi'\rangle$. For $w\in\mathcal{D}'(\T)$, its Fourier coefficients and sharp dyadic frequency projections are given by
\[
\widehat w(k)\defeq \langle w, e^{-2\pi ikx}\rangle,\qquad \Pi_jw\defeq \sum_{2^j\le |k|<2^{j+1}}\widehat w(k)e^{2\pi ikx}\quad (j\ge0).
\]

\begin{definition}[Besov spaces and frequency-domain functionals]\label{def:besov}
Let $s\in\R$ and $p,q\in[1,\infty]$. Choose an even cutoff function $\chi\in C_c^\infty(\R)$ such that $0\le\chi\le1$, equal to 1 on $[-1,1]$ and to 0 outside $[-2,2]$. For $w\in\mathcal{D}'(\T)$, define the smooth dyadic projections by
\begin{equation}\label{eq:smooth-blocks}
\widehat{\mathsf P_0w}(k)\defeq \chi(k)\widehat w(k),\qquad
\widehat{\mathsf P_jw}(k)\defeq [\chi(2^{-j}k)-\chi(2^{1-j}k)]\widehat w(k)\quad(j\ge1).
\end{equation}
The sequence $(\mathsf P_j)_{j\ge0}$ forms a resolution of the identity in the sense of distributions, namely $w=\sum_{j\ge0}\mathsf P_jw$. The periodic Besov space $B^s_{p,q}(\T)$ is equipped with the norm
\begin{equation}\label{eq:general-besov-norm}
\|w\|_{B^s_{p,q}(\T)}\defeq \big\|\big(2^{sj}\|\mathsf P_jw\|_{L^p(\T)}\big)_{j\ge0}\big\|_{\ell^q}.
\end{equation}
To quantify critical regularity, define
\[
\mathcal B_q(w)\defeq |\widehat w(0)| + \big\|\big(\|\Pi_jw\|_2\big)_{j\ge0}\big\|_{\ell^q}.
\]
If $\mathcal B_q(w)<\infty$, we say that $w$ satisfies the frequency $L^2$-weak Dini condition with exponent $q$.
\end{definition}

\begin{definition}[Weak Dini conditions]\label{def:weak-dini}
For a continuous function $w$, define its modulus of continuity and weak Dini functional by
\[
\omega_w(t)\defeq \sup_{\substack{x\in\T\\ |u|\le t}}|w(x+u)-w(x)|,\qquad
\mathcal D_\gamma(w)\defeq \int_0^{e^{-1}} \frac{\omega_w(t)}{t(\ln(1/t))^\gamma}\,dt
\quad(0\le\gamma\le1).
\]
If $\mathcal D_\gamma(w)<\infty$, we say that $w$ satisfies the $\gamma$-weak Dini condition; the case $\gamma=0$ is the classical Dini condition.
\end{definition}

The functional $\mathcal B_q$ is uniformly equivalent to the $B^0_{2,q}(\T)$ norm; see Proposition~\ref{appB:block-calculus} for a proof. All Besov spaces in this paper are defined using the smooth resolution of the identity $(\mathsf P_j)_{j\ge0}$, so as to include the endpoints $p\in\{1,\infty\}$. The sharp dyadic projections $\Pi_j$ are used specifically to define $\mathcal B_q$ through $L^2$ orthogonality and to derive frequency-domain energy estimates. For functions of zero mean, $\mathcal B_q$ is precisely the $\ell^q$ norm of the sequence of $L^2$ norms of the dyadic blocks. By the inclusions in~\eqref{appB:q-monotonicity}, for $1\le q_1\le q_2$ we have the continuous embeddings
\[
B^0_{2,1}(\T)\subset B^0_{2,q_1}(\T)\subset B^0_{2,q_2}(\T).
\]

Membership of lifts of circle homeomorphisms in Sobolev and Besov spaces is defined through their periodic displacements. Specifically, for any lift $h$, its derivative $h'$ is a well-defined periodic function, and the assertion $h\in W^{1,p}(\T)$ means $h-\id\in W^{1,p}(\T)$. Similarly, the differentiation property of Besov spaces~\eqref{appB:lifting} gives $F''\in B^0_{2,q}(\T)\iff F-R_\alpha\in B^2_{2,q}(\T)$. On the space of lifts $\mathcal{D}^2(\T)$, define
\begin{equation}\label{eq:d2-definition}
d_2(F,G)\defeq \max_{0\le j\le2}\|D^j(F-G)\|_\infty,
\qquad
\mathfrak d_q(F,G)\defeq d_2(F,G)+\mathcal B_q(F''-G'').
\end{equation}
Here $d_2$ is a metric on $\mathcal D^2(\T)$, whereas $\mathfrak d_q$ is an extended metric. Its restriction to the subclass of maps satisfying $F''\in B^0_{2,q}(\T)$ is a finite-valued metric.

\subsection{Main results}\label{subsec:main-results}

With the function spaces and metrics defined above, our main results are as follows.

\begin{theorem}[Counterexamples up to the Besov endpoint]\label{thm:main}
Let $\alpha$ be any irrational number of constant type. For every $q_*>1$ and $\varepsilon>0$, there exists $F\in\mathcal F_\alpha^2$ such that
\begin{gather*}
\mathfrak d_{q_*}(F,R_\alpha)<\varepsilon,\qquad
F''\in\bigcap_{q>1}B^0_{2,q}(\T),\qquad F''\notin B^0_{2,1}(\T).
\end{gather*}
Let $h$ be the corresponding normalized increasing conjugacy, that is,
\[
 F\circ h=h\circ R_\alpha,\qquad h(0)=0,\qquad h(x+1)=h(x)+1.
\]
Then $h'>0$ almost everywhere, and
\begin{align*}
 h'\in\bigcap_{1<p<\infty}B^0_{p,2}(\T),\quad
 h-\id,\ h^{-1}-\id\in\bigcap_{1<p<\infty}W^{1,p}(\T),\quad
 h'\notin L^\infty(\T).
\end{align*}
In particular, $F$ and $R_\alpha$ are not $C^1$ conjugate. Moreover, $F$ can be chosen so that $\mathcal D_\gamma(F'')<\infty$ for every $1/2<\gamma\le1$.
\end{theorem}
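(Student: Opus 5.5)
The plan is to build $F$ by prescribing the conjugacy rather than the map. We will choose $h$ first, close to $\id$, with $h'$ unbounded but in every $L^p$, and then set $F\defeq h\circ R_\alpha\circ h^{-1}$. The difficulty is that $F$ must be $C^2$ even though $h$ is not $C^1$. Following the Meyer/Herman idea, we pass to the cohomological level and write
\[
\log F'\circ h - \log h' \circ R_\alpha + \log h' = 0 .
\]
So we seek $\varphi\defeq\log h'$ (zero-mean up to normalization, with $\int e^{\varphi}=1$) such that $\psi\defeq \varphi\circ R_\alpha-\varphi$ is smooth enough. We then recover $\log F' = \psi\circ h^{-1}$. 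The main regularity problem is that composing with $h^{-1}$, which is merely $W^{1,p}$, must not destroy the $C^1\cap B^1_{2,q}$ regularity of $\psi$ needed for $F''$.

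Because composition is delicate, I would instead construct $F$ as an infinite product (a limit) of maps. For the constant type $\alpha$ with convergents $p_n/q_n$, pick a lacunary subsequence $n_k$ and amplitudes $a_k$. Set
\[
\varphi=\sum_k a_k\, \phi_k, \qquad \phi_k(x)=\cos(2\pi q_{n_k}x),
\]
possibly with a slowly varying envelope, and
\[
\psi_k=\phi_k\circ R_\alpha-\phi_k,
\]
whose size is $\asymp a_k\|q_{n_k}\alpha\|\asymp a_k/q_{n_k}$. Thus $D^2$ of the corresponding displacement in $F$ has frequency-$q_{n_k}$ block of $L^2$-size $\asymp a_k$. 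This gives two conditions:
\begin{itemize}
\item $F''\in B^0_{2,q}$ for all $q>1$ but not $q=1$ requires $\sum a_k^q<\infty$ for $q>1$ and $\sum a_k=\infty$, for instance $a_k=\delta/k$. The factor $\delta$, and starting the sum at large $k$, make $\mathfrak d_{q_*}(F,R_\alpha)<\varepsilon$.
\item Unboundedness of $h'$ requires $\varphi$ unbounded. Since the phases align at $x=0$, $\sum a_k=\infty$ forces $\varphi(0)=\infty$.
\end{itemize}
Square-summability of the $a_k$ together with lacunarity gives $\varphi$ subgaussian (Zygmund/Salem–Zygmund). Hence $e^{\varphi}\in L^p$ for every $p<\infty$, giving $h'\in L^p$, $h^{-1}-\id\in W^{1,p}$ via $e^{-\varphi}$, and $h'>0$ a.e. The claim $h'\in B^0_{p,2}$ should follow from Littlewood–Paley, using that $\varphi$ has lacunary $\ell^2$ coefficients and that $e^{\varphi}$ inherits a square function in $L^p$. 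Continuity of $F''$ needs the partial sums of the lacunary series $\sum a_k\psi_k'$ to converge uniformly. With $a_k=\delta/k$, uniform convergence of a cosine series with nonabsolutely summable coefficients is not automatic. I would therefore use alternating signs or randomized phases $\pm$ in $\psi_k$ while keeping $\varphi(0)$ divergent. This is possible because $\psi_k$ and $\phi_k$ differ by the small rotation defect. I will check this step carefully.

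The weak Dini claim is a computation. A lacunary block of amplitude $a_k$ at frequency $q_{n_k}\approx\lambda^k$ contributes roughly $\min(a_k, a_k q_{n_k}t)$ to $\omega_{F''}(t)$, plus tails. With $a_k\sim1/k$, summing over $k\le \log(1/t)$ would give $\omega(t)\sim\log\log(1/t)$, which is bad. So the sign or phase randomization must bound the supremum of the partial sums by $(\sum a_k^2)^{1/2}$ tails, i.e.\ $\omega(t)\lesssim(\log 1/t)^{-1/2}$. Then $\int\omega(t)\,dt/(t(\ln 1/t)^\gamma)<\infty$ exactly when $\gamma>1/2$. This suggests choosing signs deterministically via Rudin–Shapiro type polynomials in the block structure so that all partial sums are uniformly controlled.

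The main obstacle is transferring regularity through $h^{-1}$, i.e.\ showing that $\psi\circ h^{-1}$ is $C^1$ with the stated Besov norms. I would handle it by a KAM-free direct approach: define $F$ as the limit of $F_K=h_K\circ R_\alpha\circ h_K^{-1}$, where $h_K$ is the smooth conjugacy built from the first $K$ terms. I would then estimate $F_{K+1}-F_K$ in $C^2\cap B^2_{2,q}$ using that $h_{K+1}=h_K\circ g_K$, with $g_K$ nearly a rotation-commuting perturbation at the much higher frequency $q_{n_{K+1}}$. Lacunarity makes the low-frequency distortion from $h_K$ act almost as a constant rescaling on that block. Uniqueness of the normalized conjugacy (Denjoy) then identifies $h=\lim h_K$, and $h'\notin L^\infty$ ensures no $C^1$ conjugacy.
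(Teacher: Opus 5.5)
\textbf{There is a genuine gap, at the step you flagged, and it cannot be repaired by choosing signs.}

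\emph{Why the lacunary design fails.} The frequencies $\{\pm q_{n_k}\}$ form a Hadamard lacunary set, hence a Sidon set. Any bounded function with spectrum there therefore has absolutely summable Fourier coefficients, whatever the signs or phases. For constant type, $q_{n_k}\|q_{n_k}\alpha\|\asymp1$, so the coefficients of $\psi'=D\Delta_\alpha\varphi$ at $\pm q_{n_k}$ have modulus $\asymp a_k$. Hence $\sum_k a_k\psi_k'$ is bounded, let alone uniformly convergent, only if $\sum_k a_k<\infty$. That is exactly the condition you must violate to get $\varphi(0)=\infty$ and $F''\notin B^0_{2,1}$.

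Put differently, $|m_{q_n}|\asymp1$, so the small-divisor inverse $A_\alpha^{-1}$ is bounded $L^\infty\to L^\infty$ on lacunary spectra. No lacunary $\varphi$ can have $\|\varphi\|_\infty\gg\|D\Delta_\alpha\varphi\|_\infty$. Alternating signs, random phases or Rudin--Shapiro signs do not change this, and the same obstruction rules out your weak Dini heuristic $\omega_{F''}(t)\lesssim(\ln 1/t)^{-1/2}$.

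\emph{The iterative variant has the same problem.} Your last paragraph does have the paper's architecture $h_{K+1}=h_K\circ g_K$. But a single-frequency seed $\widetilde g=gR_\alpha g^{-1}$ with $\ln g'=a\cos(2\pi q_nx)$ has $d_2(\widetilde g,R_\alpha)\asymp a$, while its iterates have derivative distortion at most $e^{2a}$. This is a linear trade-off between distortion and $C^2$ cost. The $C^2$ increments $F_{K+1}-F_K$ are therefore at least comparable to $a_{K+1}$ and are not summable, so the Cauchy argument has nothing to run on.

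\emph{The missing idea.} What you need is that $A_\alpha^{-1}$ is unbounded on $C^0$, and this is visible only on non-lacunary spectra. The paper uses the Fej\'er-regularized kernel $P_N=\sum_{0<|k|\le N}\sigma_N(k)m_ke_k$. Here $D\Delta_\alpha P_N=\mathsf F_N-1$ has $L^1$ norm at most $2$. On the other hand, $L_N=\|P_N\|_1\ge\Gamma_c\sqrt{\ln N}$; this follows from $\|P_N\|_2^2\gtrsim\ln N$, the $L^4$ bound $\|P_N\|_4^2\lesssim\ln N$ via Lemma~\ref{lem:finite-band}, and H\"older.

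The potential $U_N(x)=L_N^{-1}\int_\T P_N(x+y)\sgn P_N(y)\,dy$ then satisfies $U_N(0)=1$ but $\|D\Delta_\alpha U_N\|_\infty\le 2/L_N$. The seed $g_{N,T}$ built from $e^{TU_N}$ has an iterate with derivative $>e^{15T/16}$, while $d_2(g_{N,T},R_\alpha)=O_T((\ln N)^{-1/2})$. So large distortion comes at arbitrarily small $C^2$ cost.

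This decoupling is what lets the induction close. The analytic conjugacy $H_{n-1}$ (from Yoccoz's theorem) is fixed first. Then $N_n$ is taken large enough to beat $C_{\mathrm{conj}}(H_{n-1})$. The same $(\ln N)^{-1/2}$ rate gives the $\mathcal B_q$ smallness for $q>1$ and the $\mathcal D_\gamma$ smallness for $\gamma>1/2$.

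Two further differences from your plan. First, $F''\notin B^0_{2,1}$ is forced by a separate insertion $\lambda\mathcal I^2(A_\alpha U_N)$ at each stage. Second, the regularity of $h$ ($h'>0$ a.e., $W^{1,p}$, $B^0_{p,2}$) is not built by hand: it follows from the Katznelson--Ornstein estimates, which hold for every $F\in\mathcal F_\alpha^2$.
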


\begin{corollary}[Density and category of the counterexamples]
\label{cor:dense-meagre}
Fix a constant-type irrational number $\alpha$, and give
$\mathcal F_\alpha^2$ the relative $C^2$ topology. For
$F\in\mathcal F_\alpha^2$, let $h_F$ be its normalized increasing
conjugacy, so that
\[
 F\circ h_F=h_F\circ R_\alpha,\qquad
 h_F(0)=0,\qquad h_F(x+1)=h_F(x)+1.
\]
Let $\mathcal E_\alpha$ be the set of all such $F$ satisfying
\[
 F''\in\left(\bigcap_{q>1}B^0_{2,q}(\T)\right)
       \setminus B^0_{2,1}(\T),\qquad
 \mathcal D_\gamma(F'')<\infty\quad
       \text{for every }\tfrac12<\gamma\le1,\qquad
 h_F'\notin L^\infty(\T).
\]
Then $\mathcal E_\alpha$ is dense and meagre in
$\mathcal F_\alpha^2$.
\end{corollary}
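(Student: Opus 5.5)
The proof has two parts, density and meagreness, and each uses the results available: Theorem~\ref{thm:main} for density, and either Herman's residuality theorem or a direct Baire argument for meagreness.

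\textbf{Density.} Fix $G\in\mathcal F_\alpha^2$ and $\varepsilon>0$. Herman's density of smooth conjugates applies: since $\alpha$ is of constant type, hence Diophantine, Herman/Yoccoz theory gives that $C^\infty$ diffeomorphisms with rotation number $\alpha$ are $C^\infty$ conjugate to $R_\alpha$ and are $C^2$-dense in $\mathcal F_\alpha^2$. So we first pick $G_1=\phi\circ R_\alpha\circ\phi^{-1}$ with $\phi$ a $C^\infty$ diffeomorphism and $d_2(G,G_1)<\varepsilon/2$.

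Next we apply Theorem~\ref{thm:main} with a tiny $\varepsilon'$ and some fixed $q_*>1$, obtaining $F_0\in\mathcal F_\alpha^2$ with $\mathfrak d_{q_*}(F_0,R_\alpha)<\varepsilon'$. We then set $F=\phi\circ F_0\circ\phi^{-1}$. Since conjugation by a fixed smooth $\phi$ is continuous in $d_2$, choosing $\varepsilon'$ small gives $d_2(F,G_1)<\varepsilon/2$.

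It remains to check that $F\in\mathcal E_\alpha$.
\begin{enumerate}
\item The conjugacy is $h_F=\phi\circ h_{F_0}\circ\phi^{-1}$ up to a normalizing translation. Hence $h_F'\notin L^\infty$, because $\phi'$ is bounded above and below.
\item The second derivative is
\[
F''=\phi''(F_0\circ\phi^{-1})\,(\cdots)+\phi'(F_0\circ\phi^{-1})\,F_0''(\phi^{-1})\,((\phi^{-1})')^2+\cdots,
\]
that is, smooth terms plus $a\cdot(F_0''\circ\psi)$ with $a,\psi$ smooth and $\psi$ a diffeomorphism.
\item For the weak Dini condition, $\omega_{F_0''\circ\psi}(t)\le\omega_{F_0''}(Ct)$, so $\mathcal D_\gamma$ finiteness is preserved.
\item For the Besov conditions, we need that $B^0_{2,q}$ is invariant under multiplication by smooth functions and composition with smooth diffeomorphisms, and that $B^0_{2,1}$ is too. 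Applying the same invariance with $\phi^{-1}$ shows that $F''\notin B^0_{2,1}$.
\end{enumerate}

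\textbf{Meagreness.} Since $\mathcal E_\alpha\subset\{F:h_F'\notin L^\infty\}$, it suffices to show that $\mathcal N\defeq\{F\in\mathcal F_\alpha^2: h_F\text{ not }C^1\}$ is meagre. This is not Herman's theorem, which says the complement of $\mathcal F_\alpha^{2}$-maps with $C^1$ conjugacy is residual; that gives the opposite inclusion. So we instead exhibit $\mathcal E_\alpha$ inside a countable union of closed nowhere dense sets. The natural choice is
\[
K_M\defeq\{F:\mathcal D_1(F'')\le M\}.
\]
\begin{enumerate}
\item $K_M$ is closed in $C^2$, by lower semicontinuity of $\omega$ under uniform convergence and Fatou.
\item $K_M$ has empty interior, because $C^2$-small perturbations by functions with large $\mathcal D_1$ exist. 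For example, add $\delta\,\eta$ where $\eta''$ is a small-amplitude function with a bad modulus, $\omega_{\eta''}(t)\sim 1/\ln\ln(1/t)$, whose $\mathcal D_1$ diverges. We then restore the rotation number $\alpha$ by composing with a small rotation, which is possible by continuity and monotonicity of $\rho$ in the family $R_t\circ F$.
\end{enumerate}
Since $\mathcal E_\alpha\subset\bigcup_M K_M$, it is meagre.

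\textbf{Main obstacle.} The delicate step is the invariance of $B^0_{2,q}$ and of the non-membership in $B^0_{2,1}$ under composition with $C^\infty$ diffeomorphisms. This is a standard but nontrivial property, since $s=0$ and composition operators are only bounded for $s>0$ in general, though a smooth diffeomorphism is fine. I would cite the diffeomorphism invariance of Besov spaces (Triebel), with the appendix's block calculus for multiplication. The empty-interior step also needs care to keep $\rho=\alpha$.
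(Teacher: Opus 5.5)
Your proposal is correct and takes essentially the paper's route. For density, you approximate by a map smoothly conjugate to $R_\alpha$ (the paper uses analytic approximations and Yoccoz's analytic linearization) and transport a counterexample from Theorem~\ref{thm:main} using invariance of $\mathcal E_\alpha$ under smooth conjugation. For meagreness, you use the closed nowhere dense sets $\{\mathcal D_1(F'')\le M\}$; the paper perturbs by $-\delta(2\pi N)^{-2}\sin(2\pi Nx)$ instead of a fixed $\eta$ with $\mathcal D_1(\eta'')=\infty$, and proves the Besov invariance directly from the block decay $\|\Pi_jT\Pi_k\|_{L^2\to L^2}\le C2^{-|j-k|}$ rather than citing Triebel.

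There are three slips, none of which affects the conclusion.
\begin{enumerate}
\item The normalized conjugacy is $h_F=\phi\circ h_{F_0}\circ R_\theta$ for a suitable $\theta$. Your $\phi\circ h_{F_0}\circ\phi^{-1}$ conjugates $F$ to $\phi R_\alpha\phi^{-1}$, not to $R_\alpha$.
\item The coefficient $a$ and the remaining terms of $F''$ contain $F_0$ and $F_0'$, so they are only $C^2$ and $C^1$, not smooth. You therefore need multiplier bounds on $B^0_{2,q}$ for $C^1$ coefficients and the inclusion $C^1\subset B^0_{2,1}$, rather than facts about smooth multipliers.
\item The tentative reduction to $\mathcal N$ should be deleted outright. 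By Herman's theorem $\mathcal N$ is residual, so it cannot be meagre in the Baire space $\mathcal F_\alpha^2$.
\end{enumerate}
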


\begin{theorem}[Global linearization at the Besov endpoint]\label{thm:rigidity}
Let $\alpha$ be an irrational number of constant type, and let $F\in\mathcal F_\alpha^2$. If
\[
 F''\in B^0_{2,1}(\T),
\]
then there exists an orientation-preserving diffeomorphism $h\in\mathcal D^1(\T)$ such that $F\circ h=h\circ R_\alpha$. Equivalently, the condition $F-R_\alpha\in B^2_{2,1}(\T)$ implies $C^1$ linearization.
\end{theorem}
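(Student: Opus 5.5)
Our plan is to follow the standard route to $C^1$ linearization: show that the derivatives of the iterates, $DF^n$, are equicontinuous, or equivalently that the cohomological equation for $\log Dh$ has a continuous solution. We work with $\phi\defeq\log DF$. This is a $C^1$ function, and the hypothesis gives $\phi'=F''/DF\in B^0_{2,1}(\T)$, because multiplication by the $C^1$ function $1/DF$ preserves $B^0_{2,1}$. By Katznelson--Ornstein~\cite{KO89}, the conjugacy $h$ exists, is absolutely continuous, and has $Dh\in L^p$ for every $p<\infty$. Set $\psi\defeq\log Dh$. Formally it satisfies
\[
\psi\circ R_\alpha-\psi=\phi\circ h.
\]
It therefore suffices to show that $\psi$ has a continuous version, and then to upgrade $h$ to a $C^1$ diffeomorphism with $Dh>0$.

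The key mechanism is the Denjoy--Koksma estimate along the denominators $q_n$ of $\alpha$. Since $\alpha$ has constant type, $q_{n+1}\le Cq_n$. The Birkhoff sums $S_{q_n}\phi(x)=\log DF^{q_n}(x)$ must be controlled uniformly, and the pieces of their variation must be summable in $n$. We decompose $\phi'=\sum_j\Pi_j\phi'$. For the block at frequency $2^j\asymp q_m$, we estimate its contribution to the oscillation of $\log DF^{q_n}$ in two regimes.

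\textbf{Low frequencies, $2^j\ll q_n$.} Here we use the cancellation in the Birkhoff sum of a smooth function over the nearly equidistributed orbit of length $q_n$. In the linearized picture, the relevant quantity is the Fourier multiplier $\sum_{i<q_n}e^{2\pi i k i\alpha}$, which is of size $\|k\alpha\|^{-1}\le C|k|/c$. The resulting sum $\sum_{j}\|\Pi_j\phi'\|_2$ is exactly the $\mathcal B_1$ norm.

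\textbf{High frequencies, $2^j\gtrsim q_n$.} Here we use Cauchy--Schwarz together with the $L^2$ bound on the block, transported through $h$. This uses the Katznelson--Ornstein bound $Dh,Dh^{-1}\in L^p$.

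The expected outcome is
\[
\sup_x|\log DF^{q_n}(x)|\to0,
\qquad
\sum_n\operatorname{osc}\log DF^{q_n}\le C\,\mathcal B_1(F'')<\infty.
\]
Via Herman's criterion (Denjoy--Koksma combined with the renormalization decomposition of $F^N$ into blocks $F^{q_n}$ with at most $a_{n+1}\le C$ copies each), this yields $\sup_N\|\log DF^N\|_\infty<\infty$ and equicontinuity of $\log DF^N$. Arzelà--Ascoli then gives continuity of $Dh$ and $Dh>0$, since $h$ is the limit of the averages $\frac1N\sum_{i<N}F^{i}\circ R_\alpha^{-i}$ in the usual construction.

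The main obstacle is making the high/low splitting rigorous in the nonlinear setting. The orbit of $F$ is only conjugate to the rotation through the merely absolutely continuous map $h$, so Fourier cancellation for $\phi$ along $F$-orbits is not literally available. We will first try working with $\phi\circ h$. Its $B^0_{2,1}$-type control would follow from a composition estimate using $Dh\in L^p$ for all $p$, possibly with a slight loss that is absorbed because $\ell^1$ summability is stable under the $C^{0}$-smallness coming from the Khanin--Teplinsky type cross-ratio distortion bounds. If that fails, we will run a bootstrap. A priori $Dh\in B^0_{p,2}$ (as in Theorem~\ref{thm:main}), and inserting this into the cohomological equation improves $\psi$ from $B^0_{2,2}$ to $B^0_{2,1}\cap$ ``sum of block sup-norms'' finite. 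Bernstein's inequality $\|\Pi_j w\|_\infty\le C2^{j/2}\|\Pi_jw\|_2$, applied with the derivative gain $2^{-j}$ from the equation, yields an absolutely convergent uniform series for $\psi$, hence continuity.
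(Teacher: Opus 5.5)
\textbf{Verdict: there is a genuine gap.} The nonlinear step that carries the theorem is stated only as an ``expected outcome'', and neither fallback you describe can supply it.

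Your linear heuristic is sound. For Birkhoff sums of $\phi$ over $R_\alpha$, you can split each block according to whether $q_n\lesssim 2^j$ or $q_n\gtrsim 2^j$ and use the separation of the points $k\alpha$, $k\in I_j$ (the estimate behind $\sum_{k\in I_j}|m_k|^2\le C_c$). This gives $\sum_n\|S_{q_n}\phi\|_\infty\le C\mathcal B_1(\phi')$, with geometric factors $(q_n/2^j)^{1/2}$ and $2^j/q_{n+1}$, and it mirrors the paper's split at $q_n\approx N$. The problem is carrying this through the nonlinearity.

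\textbf{Composition estimate.} This route needs $(\phi\circ h)'=(\phi'\circ h)\,h'$ to inherit $B^0_{2,1}$ control. But composition with a homeomorphism whose derivative is only in $L^p$ is not even bounded on $W^{1,2}$: one has $\|(w\circ h)'\|_2^2=\int_\T|w'|^2\,h'\circ h^{-1}$, which requires $h'\in L^\infty$, the very conclusion you want. So the quasi-diagonal block decay used in Section~\ref{sec:density-category} is unavailable. Since Theorem~\ref{thm:main} shows $q=1$ is sharp, no ``slight loss'' can be absorbed. The Khanin--Teplinsky distortion bounds need $C^{2+\varepsilon}$; at $C^2$ one only has $\widetilde K_n^1\to0$, with no rate.

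\textbf{Bootstrap.} Here the bookkeeping is reversed. The equation for $\psi$ has small divisors $(e^{2\pi ik\alpha}-1)^{-1}$ of size up to $|k|/(4c)$, so it loses a derivative rather than gaining $2^{-j}$. Bernstein's inequality then costs a further $2^{j/2}$ per block. Moreover, the derivative of the right-hand side contains the factor $h'\in B^0_{p,2}$, which carries only $\ell^2$ summability, so the bootstrap is circular exactly at the endpoint. Even in the linear case, the sharp sup-norm bound is Lemma~\ref{lem:besov-multiplier}, which applies Cauchy--Schwarz to the coefficients, not Bernstein.

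\textbf{What the paper does instead.} The missing idea is to bring the Fourier information into the nonlinear dynamics without ever composing with $h$. The paper does this in two steps.
\begin{enumerate}
\item Lemma~\ref{lem:Fourier-orbit-comparison} compares the blocks of $g=F''/F'$ with the conditional expectations $E_n$ onto the \emph{dynamical} partitions $P_n(F)$, rather than with Birkhoff sums over $R_\alpha$. The nonlinearity then enters only through the atom lengths $|I|=|h(J)|$, and $h'\in L^2$ alone controls these in measure via \eqref{eq:short-atom-measure}. Passing from $L^2$ to $L^{3/2}$ turns these measure bounds into the factors $(q_n/N)^{1/12}$ and $(N/q_n)^{1/12}$, giving $\sum_n\|(E_n-E_{n-1})g\|_{3/2}\le C_H\|g\|_{B^0_{2,1}}$.
\item This summability is fed into the Katznelson--Ornstein recursion of Proposition~\ref{appC:KO-recursion} for $\widetilde K_n^1$. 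Its self-referential term carries a small factor $C\varepsilon$, which comes from the qualitative $C^2$ fact $\widetilde K_n^1\to0$. After summing over $n$ this term is absorbed, yielding $\sum_nK_n^0<\infty$.
\end{enumerate}
That absorption is what handles the nonlinear feedback your proposal leaves open. Once $\sum_nK_n^0<\infty$ is in hand, your Herman-type endgame, or the paper's dyadic difference quotients, finishes the proof.
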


\begin{remark}[Sharpness of the summability exponent]\label{rem:sharpness-q}
In the class $\mathcal F_\alpha^2$, the summability exponent $q=1$ in the implication
\[
F'' \in B^0_{2,q}(\T) \implies F \text{ and } R_\alpha \text{ are } C^1 \text{ conjugate},
\]
is optimal: for $q=1$, Theorem~\ref{thm:rigidity} applies to every map in this class, whereas for every $q>1$, Theorem~\ref{thm:main} provides counterexamples in every $\mathfrak d_q$ neighborhood of $R_\alpha$.
\end{remark}

\begin{remark}[Regularity of the conjugacy]\label{rem:regularity-h}
The assertions concerning Sobolev and Besov regularity of the conjugacy in Theorem~\ref{thm:main} hold for every map in $\mathcal F_\alpha^2$. Proposition~\ref{prop:conjugacy-regularity} derives these conclusions from the interval distortion estimates of Katznelson and Ornstein \cite{KO89}. The key point of the counterexample construction is to force $h'\notin L^\infty(\T)$, thereby destroying Lipschitz regularity. Herman~\cite[\S2.2]{Her98} also discussed regularity questions in Sobolev topologies.
\end{remark}

\begin{remark}[Sharpness in the weak Dini scale]\label{rem:sharpness-dini}
By Lemma~\ref{lem:half-dini} and Theorem~\ref{thm:rigidity}, for every $F\in\mathcal F_\alpha^2$ we have the chain of implications
\begin{equation}\label{eq:implication-chain}
\mathcal D_{1/2}(F'')<\infty \implies F''\in B^0_{2,1}(\T) \implies F \text{ and } R_\alpha \text{ are } C^1 \text{ conjugate}.
\end{equation}
The first implication is strict even within a class with fixed rotation number; see Remark~\ref{rem:half-dini-strict}. Since $\mathcal D_{1/2}(F'')\le \mathcal D_\gamma(F'')$ for $0\le\gamma\le 1/2$, the weak Dini conditions in this range are globally sufficient, extending the classical Dini result of \cite[Corollary 4.1]{TXL}. Conversely, for every fixed $1/2<\gamma_*\le 1$, Theorem~\ref{thm:main} provides maps satisfying $d_2(F, R_\alpha)+\mathcal D_{\gamma_*}(F'')<\varepsilon$ that are not $C^1$ conjugate to the rotation. Thus $\gamma=1/2$ is the  sharp threshold.
\end{remark}

\begin{remark}[Necessity of the constant type assumption]\label{rem:necessity-constant-type}
The constant type assumption is indispensable for absolute continuity of the conjugacy for general $C^2$ maps. For every irrational number $\alpha$ that is not of constant type, Hawkins and Schmidt \cite{HS82} constructed a $C^2$ circle diffeomorphism whose induced transformation is of Krieger type $\mathrm{III}_1$. Such maps admit no absolutely continuous conjugacy to $R_\alpha$, and hence no $C^1$ conjugacy.
\end{remark}

\subsection{Outline of the proofs and organization of the paper}\label{subsec:strategy}

The proof of Theorem~\ref{thm:main} uses a Fej\'er-regularized small-divisor kernel whose $L^1$ norm diverges at least as fast as $\sqrt{\ln N}$. Taking an antiderivative of the sign of the kernel produces a normalized potential with supremum 1 and a tightly controlled cohomological difference. Exponentiating this potential yields an analytic conjugacy whose derivative undergoes strong local oscillations, while the conjugated circle map remains sufficiently close to the rigid rotation in the $C^2$ topology. The amplitude determines the scale of derivative growth, while the Fourier cutoff order is used to control the $C^2$, Besov, and weak Dini norms.

Each stage of the inductive construction combines such an analytic perturbation with the insertion of a new frequency block. Throughout the iteration, three nondegeneracy bounds are maintained uniformly: a positive lower bound for the derivative of a finite iterate, lower bounds for selected secant slopes of the normalized conjugacy, and a lower bound for the local $B^0_{2,1}$ modulus of the second derivative. Summable error bounds ensure convergence to a $C^2$ limit diffeomorphism that inherits these inequalities. After each insertion, Yoccoz's analytic linearization theorem \cite{Yoc84} is used to update the background conjugacy for the next stage; see Figure~\ref{fig:recursion} for the order of parameter selection. The Sobolev and Besov estimates for the limiting conjugacy rely directly on the square-summability and interval ratio estimates in Propositions~\ref{appC:KO-square-summability} and \ref{appC:KO-interval-ratios}; their derivation is given in Section~\ref{subsec:conjugacy-regularity}.

For the density and category assertions in Corollary~\ref{cor:dense-meagre}, we first establish that the critical regularity conditions defining $\mathcal E_\alpha$ are preserved under smooth conjugation by exploiting the quasi-diagonal decay of sharp dyadic projections under changes of coordinates. Approximating any $C^2$ map by real-analytic lifts, linearizing them via Yoccoz's theorem, and applying Theorem~\ref{thm:main} yields the density of $\mathcal E_\alpha$. Conversely, by introducing high-frequency sinusoidal perturbations to blow up the weak Dini integrals locally, we place $\mathcal E_\alpha$ inside a countable union of closed nowhere dense sets, establishing its meagreness in $\mathcal F_\alpha^2$.

In the proof of Theorem~\ref{thm:rigidity}, Proposition~\ref{appC:KO-interval-ratios} establishes absolute continuity of the normalized conjugacy and $h'\in L^2(\T)$, which yield measure estimates for long intervals in the dynamical partitions. The argument is divided at the characteristic block frequency: below this frequency, integral oscillations of periodic primitives control the block averages; above it, the fundamental theorem of calculus controls the averaging error. Passing from $L^2$ to $L^{3/2}$ yields decay by a positive power of the measure of the exceptional set, making these estimates absolutely summable over the partition scales. The $B^0_{2,1}$ assumption on the second derivative ensures absolute summability of the averaging increments.
Summing the recursion in Proposition~\ref{appC:KO-recursion} and applying the comparison estimate in Proposition~\ref{appC:KO-smallness} gives absolute summability of the return distortions. Together with the interval ratio estimate in Proposition~\ref{appC:KO-interval-ratios}, this ensures that the dyadic difference quotients converge uniformly to an everywhere positive continuous limit $h'$, yielding $h\in\mathcal D^1(\T)$ by a direct argument.

The remainder of the paper is organized as follows. Sections~2--4 establish the kernel estimates, construct the potentials, and prove the analytic perturbation bounds. Section~\ref{sec:construction} completes the inductive construction and proves Theorem~\ref{thm:main}. Section~\ref{sec:density-category} proves Corollary~\ref{cor:dense-meagre}. Section~\ref{sec:rigidity} proves Theorem~\ref{thm:rigidity}. Appendix~\ref{app:square} contains the finite-band square estimate. Appendix~\ref{app:function-spaces} lists the function-space properties used for $h'$ and periodic displacements. Appendix~\ref{app:KO} collects the dynamical results of Katznelson and Ornstein used in Section~\ref{subsec:conjugacy-regularity} and Section~\ref{sec:rigidity}.

\subsection*{Acknowledgments}
This work was partially supported by the National Natural Science Foundation of China (Grant Nos.~12671217 and 12122109). The authors sincerely thank Dr.~Zhicheng Tong for his valuable suggestions, in particular his suggestion to study the modulus of continuity of $F''$ in Theorem~\ref{thm:main}.

\section{Fourier kernels and quantitative norm estimates}\label{sec:fourier}

We first estimate the regularized kernel of the cohomological equation. Bounds on its dyadic frequency blocks yield the growth estimate for the $L^1$ norm required in the construction. For basic properties of Fourier series and Fej\'er kernels, see \cite{SS03}. Write
\[
e_k(x)\defeq e^{2\pi ikx},\qquad
\Delta_\alpha u\defeq u\circ R_\alpha-u,\qquad
A_\alpha\defeq D\Delta_\alpha.
\]
Since $A_\alpha e_k=2\pi ik(e^{2\pi ik\alpha}-1)e_k$, define
\begin{equation}\label{eq:multiplier}
m_0\defeq0,\qquad
m_k\defeq\frac{1}{2\pi ik(e^{2\pi ik\alpha}-1)}\quad(k\ne0).
\end{equation}
Using $|e^{2\pi it}-1|=2\sin(\pi\|t\|)$, we have
\[
4\|k\alpha\|\le|e^{2\pi ik\alpha}-1|\le2\pi\|k\alpha\|,
\]
and hence \eqref{eq:certificate} gives, for every $k\ne0$,
\begin{equation}\label{eq:multiplier-bounds}
\frac{1}{4\pi^2(|k|\|k\alpha\|)}
\le |m_k|\le\frac{1}{8\pi(|k|\|k\alpha\|)}
\le\beta_c,\qquad
\beta_c\defeq\frac{1}{8\pi c}.
\end{equation}

\begin{remark}\label{rem:multiplier}
For the cohomological equation $\Delta_\alpha u=\varphi$, where $u$ and $\varphi$ have zero mean, the Fourier convention in the introduction gives
\[
\widehat u(k)
=\frac{\widehat\varphi(k)}{e^{2\pi ik\alpha}-1}
=m_k\widehat{D\varphi}(k)\qquad(k\ne0).
\]
Since the sequence $(m_k)$ is bounded, it defines a periodic distribution $\mu_\alpha$ by $\widehat{\mu_\alpha}(k)\defeq m_k$: explicitly, $\langle\mu_\alpha,\phi\rangle=\sum_k m_k\widehat\phi(-k)$, and the sum converges absolutely for every smooth periodic function $\phi$. The zero-mean solution is represented in the sense of distributions by $u=\mu_\alpha*D\varphi$. The notation $\mu_\alpha\sim\sum_{k\ne0}m_ke_k$ denotes a formal Fourier series and does not assert pointwise convergence. Here convolution means pointwise multiplication of the Fourier coefficients. The normalization is specified by $m_0=0$. For the difference operator with the opposite sign, $u-u\circ R_\alpha$, the multiplier is $-m_k$; see \cite{Mey83,Her18}.
\end{remark}

For $N\in\N$, define the Fej\'er weights and the regularized kernel by
\begin{align}
\sigma_N(k)&\defeq\max\left\{0,1-\frac{|k|}{N+1}\right\}
\qquad(k\in\Z),\label{eq:weights}\\
P_N(x)&\defeq\sum_{0<|k|\le N}\sigma_N(k)m_ke_k(x),
\qquad L_N\defeq\|P_N\|_1.\label{eq:kernel}
\end{align}
Since $m_{-k}=\overline{m_k}$, the polynomial $P_N$ is real-valued and has zero mean. Its coefficient at $k=1$ is nonzero for every $N\ge1$, so $L_N>0$.

\subsection{Dyadic coefficient estimates}\label{subsec:dyadic}
For $s\in\N_0$, write
\[
I_s\defeq\{k\in\Z:2^s\le|k|<2^{s+1}\},\qquad K_s\defeq2^s.
\]
For fixed $N$, write
\[
a_k\defeq\sigma_N(k)m_k,\qquad
n_s(\tau)\defeq\#\{k\in I_s:|a_k|>\tau\}\quad(\tau>0).
\]
If $k,l\in I_s$ are distinct, then $|k-l|<4K_s$, and \eqref{eq:certificate} gives
\[
\|(k-l)\alpha\|>\frac{c}{4K_s}.
\]
If $|a_k|>\tau$, then \eqref{eq:multiplier-bounds} gives $\|k\alpha\|<(8\pi K_s\tau)^{-1}$. For $\tau\ge(4\pi K_s)^{-1}$, the points $k\alpha\in\T$ lie on an arc of length $(4\pi K_s\tau)^{-1}\le1$. The separation estimate yields
\[
n_s(\tau)\le1+\frac{1}{\pi c\tau}.
\]
Since $|a_k|\le\beta_c$, $n_s(\tau)$ vanishes for $\tau\ge\beta_c$. For $0<\tau\le\beta_c$, we clearly have $1\le(8\pi c\tau)^{-1}$; for $0<\tau<(4\pi K_s)^{-1}$, the relations $\#I_s=2K_s$ and $c\le1/2$ give
\[
n_s(\tau)\le \#I_s=2K_s < (2\pi\tau)^{-1}\le(4\pi c\tau)^{-1}.
\]
Combining the two estimates gives, for every $\tau>0$, the uniform upper bound
\begin{equation}\label{eq:distribution}
n_s(\tau)\le\frac{5}{4\pi c\tau}\qquad(\tau>0).
\end{equation}
Define the constant
\[
S_c\defeq\frac{5}{2\pi^{4/3}c^{4/3}}.
\]
By \eqref{eq:distribution}, at the exponent $p=4/3$ we obtain
\begin{equation}\label{eq:block-sum}
\sum_{k\in I_s}|a_k|^{4/3}
=\frac43\int_0^{\beta_c}\tau^{1/3}n_s(\tau)\,d\tau
\le\frac43\int_0^{\beta_c}\frac{5\tau^{-2/3}}{4\pi c}\,d\tau
=S_c.
\end{equation}
Write
\[
W_s(x)\defeq\sum_{k\in I_s}a_ke_k(x),\qquad
\mathfrak n_N\defeq1+\lfloor\log_2 N\rfloor.
\]
Then $P_N=\sum_{s=0}^{\mathfrak n_N-1}W_s$.

\subsection{Upper bounds for the \texorpdfstring{$L^4$}{L4} norm}\label{subsec:L4-bound}

For a fixed frequency block $I_s$, let $b_k \defeq a_k\mathbf 1_{I_s}(k)$ be the truncated sequence. We use a consequence of the Cauchy--Schwarz inequality. For the compactly supported sequence $b$, set $B \defeq \sum_{k\in\Z} |b_k|^{4/3}$. For every $n\in\Z$, we have
\[
|(b*b)_n|^2 \le \bigg(\sum_{k\in\Z}|b_k|^{4/3}|b_{n-k}|^{4/3}\bigg)\bigg(\sum_{k\in\Z}|b_k|^{2/3}|b_{n-k}|^{2/3}\bigg)
\le B \sum_{k\in\Z}|b_k|^{4/3}|b_{n-k}|^{4/3},
\]
so that
\[
\|b*b\|_{\ell^2}^2 = \sum_{n\in\Z}|(b*b)_n|^2 \le B\sum_{k\in\Z}|b_k|^{4/3}\sum_{n\in\Z}|b_{n-k}|^{4/3} = B^3.
\]
Together with Plancherel's identity, this gives
\begin{equation}\label{eq:block-L4}
\|W_s\|_4^4 = \|W_s^2\|_2^2 = \|b * b\|_{\ell^2}^2 \le B^3 = \bigg(\sum_{k\in I_s} |a_k|^{4/3}\bigg)^3 \le S_c^3.
\end{equation}
To sum these dyadic frequency blocks, we use the following finite-band estimate; an elementary proof is given in Appendix~\ref{app:square}.

\begin{lemma}\label{lem:finite-band}
Let $(W_s)_{s\ge0}$ be a finite family of trigonometric polynomials, extended by zero, such that $\operatorname{supp}\widehat W_s\subset I_s$. Then
\begin{equation}\label{eq:finite-band}
\bigg\|\sum_{s\ge0} W_s\bigg\|_4^2 \le 16 \sum_{s\ge0}\|W_s\|_4^2.
\end{equation}
\end{lemma}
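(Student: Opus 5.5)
The plan is to reduce to lacunary one-sided families, where frequency counting makes the quartic expansion of $\|f\|_4^4=\|f^2\|_2^2$ nearly diagonal, and then reassemble the pieces by the triangle inequality. The constant $16$ will be split as $4\times4$: a factor $4$ from splitting into four sparse subfamilies, and a factor $4$ from the $L^4$ estimate inside each subfamily.

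\emph{Step 1 (splitting into four classes).} Write $W_s=W_s^++W_s^-$, where $W_s^\pm$ carries the frequencies $\pm k$ with $2^s\le k<2^{s+1}$. By the Riesz--Thorin or Marcinkiewicz theory this projection is bounded on $L^4$, but only with a constant, so I would rather avoid it. Instead, I would group the indices $s$ by parity, $s\equiv r\pmod 2$, and write $f=f_0+f_1$ with $f_r\defeq\sum_{s\equiv r}W_s$. The elementary inequality $\|f_0+f_1\|_4^2\le 2(\|f_0\|_4^2+\|f_1\|_4^2)$ then reduces the lemma to showing
\[
\|f_r\|_4^2\le 8\sum_{s\equiv r}\|W_s\|_4^2.
\]
This is the same statement for a family with gap at least $2$ between consecutive active indices. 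If this bound turns out to be too costly, I would split further by residue modulo $3$, accepting the larger gap in exchange for a weaker splitting loss.

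\emph{Step 2 (frequency bookkeeping for a sparse family).} For active indices with gaps at least $2$, expand
\[
f_r^2=\sum_t W_tG_t,\qquad G_t\defeq W_t+2S_{t-1},\qquad S_{t-1}\defeq\sum_{s<t,\ s\equiv r}W_s .
\]
Every frequency of $W_tG_t$ has absolute value in the range $(2^t-2^{t-1},\,2^{t+2})$. I would check that, for active indices differing by at least $2$, these ranges overlap for at most two consecutive values of $t$. The aim is to get almost-orthogonality: since each frequency $n$ meets at most two of the products $W_tG_t$, Cauchy--Schwarz gives
\[
\|f_r\|_4^4\le 2\sum_t\|W_tG_t\|_2^2 .
\]
It then remains to bound the pieces: $\|W_t^2\|_2=\|W_t\|_4^2$, and $\|W_tS_{t-1}\|_2^2=\int|W_t|^2|S_{t-1}|^2$.

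\emph{Step 3 (the cross terms; main obstacle).} The difficulty is controlling $\int|W_t|^2|S_{t-1}|^2$ without a maximal function or Riesz projection. My first attempt would be to expand $|S_{t-1}|^2=\sum_{s,s'<t}W_s\overline{W_{s'}}$ and observe the following. The function $|W_t|^2$ has frequencies in $(-2^t,2^t)$, while $W_s\overline{W_{s'}}$ with $s'<s\le t-2$ has frequencies of size at least $2^s-2^{s'+1}\ge0$. The aim is to use this to show that the off-diagonal terms $s\ne s'$ either vanish or are dominated by diagonal ones. This would leave
\[
\int|W_t|^2|S_{t-1}|^2\lesssim\sum_{s<t}\int|W_t|^2|W_s|^2\le\|W_t\|_4^2\sum_{s<t}\|W_s\|_4^2 .
\]
Summing over $t$ then gives
\[
\|f_r\|_4^4\le C\Big(\sum_s\|W_s\|_4^2\Big)^2 .
\]
Taking square roots yields the per-class bound, and I would track the constants so that $C\le 64$, which gives the required factor $8$.

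If the off-diagonal terms do not cancel cleanly, the fallback is to use the one-dimensional identity $\|b*b\|_{\ell^2}$ from \eqref{eq:block-L4} on the coefficient side. There I would count representations $n=k_1+k_2$ with $k_i$ in sparse dyadic blocks, which is again bounded by $2$, and conclude from that counting bound.
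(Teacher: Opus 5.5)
Your parity split and the expansion $f_r^2=\sum_tW_t(W_t+2S_{t-1})$ are fine. However, both frequency claims that follow are false, and the idea that actually closes the estimate is missing.

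\textbf{Step~2.} The block $I_t$ contains frequencies of both signs, so $W_t^2$ contains the sums $k_1+k_2$ with $k_1\in[2^t,2^{t+1})$ and $k_2\in(-2^{t+1},-2^t]$. These sums fill $(-2^t,2^t)$; for real $W_t$ one has $\widehat{W_t^2}(0)=\|W_t\|_2^2>0$. Hence a low frequency $n$ meets $W_tG_t$ for every active $t$ with $2^t>|n|$, not for at most two of them, and the bound $\|f_r\|_4^4\le2\sum_t\|W_tG_t\|_2^2$ is unjustified. This part can be repaired the way the paper treats its resonant term $\mathcal H$. Bound $\|\sum_tW_t^2\|_2\le\sum_t\|W_t\|_4^2$ by the triangle inequality. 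Use almost orthogonality only for the cross terms $W_tS_{t-1}$, whose spectra do lie in $2^{t-1}<|k|<5\cdot2^{t-1}$; this gives $\|\sum_tW_tS_{t-1}\|_2^2\le2\sum_t\|W_tS_{t-1}\|_2^2$ when the active indices have gaps at least $2$.

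\textbf{Step~3 is the real gap.} The intermediate inequality $\int|W_t|^2|S_{t-1}|^2\lesssim\sum_{s<t}\int|W_t|^2|W_s|^2$ is false, so neither cancellation nor domination of the off-diagonal terms can be proved. The reason is that the spectrum of $|W_t|^2$ generically fills $(-2^t,2^t)$, which contains the spectrum of every $W_s\overline{W_{s'}}$ with $s,s'<t$, so there is no orthogonality to exploit.

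Concretely, take $W_s=2\cos(2\pi2^sx)$ for $s=0,2,\dots,2m-2$, and $W_{2m}=\sum_{2^{2m}\le k<2^{2m+1}}e_k$. Then $|W_{2m}|^2=\sin^2(\pi2^{2m}x)/\sin^2(\pi x)$ carries at least a fraction $\pi^{-2}$ of its mass on $|x|\le2^{-2m-3}$. On that set $S_{2m-1}\ge1.9m$, while $\sum_{s<2m}|W_s|^2\le4m$ everywhere. So the ratio of the two sides is at least of order $m$.

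The missing idea is to apply H\"older directly, $\|W_tS_{t-1}\|_2\le\|W_t\|_4\|S_{t-1}\|_4$, and to control $\|S_{t-1}\|_4$ by induction on the number of nonzero blocks. Suppose the bound holds with constant $K$ for sparse families with fewer blocks. Combined with the two repaired estimates above, this gives $\|f_r\|_4^2\le(1+2\sqrt K)\sum_{s\equiv r}\|W_s\|_4^2$. Since $1+2\sqrt8\le8$, your target per-class constant $8$ closes.

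This self-improving induction is exactly the paper's proof. The paper skips the parity split, puts all pairs with $|s-t|\le1$ into $\mathcal H$, allows three-fold overlap of the bands, and closes with $3+4\sqrt6\le16$.

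Your coefficient-side fallback does not work either. Dyadic blocks are intervals, so $n=k_1+k_2$ with $k_1,k_2\in I_t$ has on the order of $2^t$ representations. Moreover, a coefficient bound would not produce $\|W_s\|_4$ on the right-hand side.
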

Combining this lemma with \eqref{eq:block-L4}, we obtain
\begin{equation}\label{eq:L4}
\|P_N\|_4^2 \le 16S_c^{3/2}\mathfrak n_N,\qquad
\|P_N\|_4^4 \le \frac{4000}{\pi^4 c^4}\mathfrak n_N^2.
\end{equation}

\subsection{A quantitative lower bound in \texorpdfstring{$L^1$}{L1}}\label{subsec:L1}

\begin{lemma}\label{lem:LN}
Define the constant
\begin{equation}\label{eq:Gamma}
\Gamma_c \defeq \frac{c^2\ln 2}{1024\cdot2^{5/2}5^{3/2}\pi^4[\ln(c^{-1})]^{3/2}}.
\end{equation}
For every integer $N$ satisfying $N\ge4c^{-2}$, we have
\begin{equation}\label{eq:LN-growth}
L_N \ge \Gamma_c\sqrt{\ln N}.
\end{equation}
\end{lemma}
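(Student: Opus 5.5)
The plan is the standard H\"older interpolation between $L^1$, $L^2$ and $L^4$:
\[
\|P_N\|_2^2\le \|P_N\|_1^{2/3}\|P_N\|_4^{4/3},
\qquad\text{hence}\qquad
L_N\ge \frac{\|P_N\|_2^3}{\|P_N\|_4^2}.
\]
This inequality follows from H\"older with $|P|^2=|P|^{2/3}|P|^{4/3}$ and exponents $3/2$ and $3$. By \eqref{eq:L4}, the denominator is at most $16S_c^{3/2}\mathfrak n_N\le C\ln N$. It therefore suffices to show $\|P_N\|_2^2\ge c'\ln N$. Then $L_N\ge c'^{3/2}(\ln N)^{3/2}/(C\ln N)=\Gamma\sqrt{\ln N}$, and tracking constants should reproduce $\Gamma_c$. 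The factor $[\ln(c^{-1})]^{3/2}$ in $\Gamma_c$ suggests that the $L^2$ lower bound will lose a factor $\ln(c^{-1})$ per unit of $\ln N$.

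The main step is the $L^2$ lower bound. By Plancherel, $\|P_N\|_2^2=\sum_{0<|k|\le N}\sigma_N(k)^2|m_k|^2$. I restrict to $|k|\le N/2$, where $\sigma_N(k)\ge1/2$, and use the lower bound in \eqref{eq:multiplier-bounds}, $|m_k|\ge (4\pi^2|k|\|k\alpha\|)^{-1}$. This reduces the problem to a lower bound for $\sum_{k\le N/2}(k\|k\alpha\|)^{-2}$.

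The key arithmetic input is that for constant-type $\alpha$ there are many $k$ with $k\|k\alpha\|=O(1)$, roughly one per geometric scale of ratio depending on $c$. Concretely, I use continued-fraction denominators $q_n$, which satisfy $q_n\|q_n\alpha\|<1$. Their consecutive ratios are bounded by $q_{n+1}/q_n\le a_{n+1}+1\le C/c$, since $\|q_n\alpha\|\ge c/q_n$ together with $\|q_n\alpha\|<1/q_{n+1}$ forces $q_{n+1}<q_n/c$. Each denominator $q_n\le N/2$ contributes at least $1/(16\pi^4)\cdot(1/4)$. The number of such $n$ is at least $\ln(N/2)/\ln(1/c)$, up to an additive constant, because $q_n\le c^{-n}$.

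To stay self-contained without continued fractions, I would instead use Dirichlet's theorem. For each $Q$ there is $1\le k\le Q$ with $\|k\alpha\|<1/Q$. The Markov bound then gives $k>cQ$, so $k\|k\alpha\|<k/Q\le1$ and $k\in(cQ,Q]$. Taking $Q=c^{-j}$ for $j=1,\dots,J$ with $c^{-J}\le N/2$ produces $k$'s lying in the intervals $(c^{1-j},c^{-j}]$. Alternate choices of $j$ make these intervals disjoint, so the $k$'s are distinct, and $J\approx\ln(N/2)/\ln(1/c)$. The hypothesis $N\ge4c^{-2}$ guarantees $J\ge1$ and lets me absorb additive constants, for example $\ln(N/2)\ge\tfrac12\ln N$ in that range.

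The expected obstacle is only bookkeeping. I need to match the constant $\Gamma_c$ exactly, including the $2^{5/2}5^{3/2}$ coming from $S_c^{3/2}$ and $\mathfrak n_N\le 2\log_2N$, and the $\ln2$ from converting $\log_2$. I also have to check that the small-$N$ regime is covered by $N\ge4c^{-2}$. The structural argument (H\"older interpolation, the $L^4$ bound \eqref{eq:L4}, and an $L^2$ lower bound from good rational approximations) should be robust.
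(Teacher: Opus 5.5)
Your proposal is correct and is essentially the paper's proof: Hölder's inequality $L_N\ge\|P_N\|_2^3/\|P_N\|_4^2$, the $L^4$ bound \eqref{eq:L4} with $\mathfrak n_N\le 2\ln N/\ln 2$, and an $L^2$ lower bound from the convergent denominators $q_j\le N/2$, using $q_j<c^{-j}$. To obtain exactly $\Gamma_c$ in the bookkeeping, you need three adjustments. First, count both frequencies $\pm q_j$, which gives $\|P_N\|_2^2\ge\mathcal N_\alpha(N)/(32\pi^4)$; counting only $q_j$ costs a factor $2^{3/2}$. Second, the floor in the count costs $-1$, so the consequence of $N\ge4c^{-2}$ you actually use is $\ln(cN/2)\ge\tfrac12\ln N$, not merely $\ln(N/2)\ge\tfrac12\ln N$. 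Third, your Dirichlet variant with alternate $j$ would keep only about half the denominators and so yield a smaller constant.
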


\begin{proof}
Let $p_j/q_j$ be the continued-fraction convergents of $\alpha$, with $q_0\defeq1$. Note that $q_j\|q_j\alpha\|<1$. If $q_j\le N/2$, then the Fej\'er weight satisfies $\sigma_N(q_j)>1/2$, and hence
\[
|\widehat P_N(q_j)| = \sigma_N(q_j)|m_{q_j}| > \frac{1}{8\pi^2}.
\]
Define
\[
\mathcal{N}_\alpha(N) \defeq \#\{j\ge1 : q_j\le N/2\}.
\]
The opposite frequencies $q_j$ and $-q_j$ are distinct and contribute equally to the $L^2$ norm, so
\begin{equation}\label{eq:L2-lower}
\|P_N\|_2^2 \ge \frac{\mathcal{N}_\alpha(N)}{32\pi^4}.
\end{equation}
By H\"older's inequality $\|P_N\|_2^3 \le \|P_N\|_1\|P_N\|_4^2$, together with \eqref{eq:L4} and \eqref{eq:L2-lower}, we obtain
\begin{equation}\label{eq:LN-count}
L_N \ge \frac{\|P_N\|_2^3}{\|P_N\|_4^2} \ge \frac{c^2}{1024\cdot5^{3/2}\pi^4} \frac{\mathcal{N}_\alpha(N)^{3/2}}{1+\lfloor\log_2 N\rfloor}.
\end{equation}
Combining the constant-type condition \eqref{eq:certificate} with $\|q_j\alpha\| < q_{j+1}^{-1}$ gives, for every $j\ge1$,
\[
c \le q_j\|q_j\alpha\| < \frac{q_j}{q_{j+1}},\qquad q_{j+1} < c^{-1}q_j,\qquad q_j < c^{-j}.
\]
Set
\[
s_c(N) \defeq \max\left\{0, \left\lfloor\frac{\ln(N/2)}{\ln(c^{-1})}\right\rfloor\right\}.
\]
Then $\mathcal{N}_\alpha(N) \ge s_c(N)$. If $N\ge4c^{-2}$, then clearly $N\ge16$ and $cN/2\ge\sqrt N$, so
\[
s_c(N) \ge \frac{\ln(N/2)}{\ln(c^{-1})} - 1 = \frac{\ln(cN/2)}{\ln(c^{-1})} \ge \frac{\ln N}{2\ln(c^{-1})}.
\]
Substituting the inequality $1+\lfloor\log_2 N\rfloor \le 2\ln N/\ln 2$ into \eqref{eq:LN-count} proves \eqref{eq:LN-growth}.
\end{proof}

\begin{remark}\label{rem:partial-quotients}
If the continued-fraction partial quotients of $\alpha$ satisfy $\mathfrak a_j\le A$ for every $j\ge1$, then one may take the Markov bound $c=(A+2)^{-1}$. By the best-approximation property, for every $q_j\le q < q_{j+1}$ we have
\[
\|q\alpha\|\ge\|q_j\alpha\|>\frac{1}{q_{j+1}+q_j}\ge\frac{1}{(A+2)q}.
\]
Since $q_{j+1}\le(A+1)q_j$, it follows that
\[
\mathcal{N}_\alpha(N)\ge\max\left\{0,\left\lfloor\frac{\ln(N/2)}{\ln(A+1)}\right\rfloor\right\}.
\]
Thus, once an a priori upper bound for the partial quotients is given, all constants in the above estimates are explicit.
\end{remark}

\subsection{The endpoint multiplier and comparison with weak Dini conditions}

\begin{lemma}\label{lem:besov-multiplier}
There exists a constant $C_c<\infty$ such that
\[
\sum_{k\in I_s}|m_k|^2\le C_c\quad(s\ge0),\qquad
\|A_\alpha^{-1}w\|_\infty\le C_c\mathcal B_1(w),
\]
for every zero-mean function $w\in B^0_{2,1}(\T)$, where the inverse is normalized to have zero mean. Moreover, for all $N$ in the range of Lemma~\ref{lem:LN},
\[
\Gamma_c\sqrt{\ln N}\le L_N\le C_c\sqrt{\ln(eN)}.
\]
\end{lemma}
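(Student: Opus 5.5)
The plan is to prove the three assertions in order, each resting on the block-counting bound \eqref{eq:distribution}.

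\textbf{Block bound for $m_k$.} Since $|m_k|\le\beta_c$ for all $k\ne0$, the distribution function of $(|m_k|)_{k\in I_s}$ vanishes for $\tau\ge\beta_c$. For $0<\tau<\beta_c$ the counting argument leading to \eqref{eq:distribution} applies verbatim with $a_k$ replaced by $m_k$, since it used only \eqref{eq:multiplier-bounds} and the separation of the points $k\alpha$. Hence
\[
\#\{k\in I_s:|m_k|>\tau\}\le \frac{5}{4\pi c\tau}.
\]
The layer-cake formula then gives
\[
\sum_{k\in I_s}|m_k|^2=2\int_0^{\beta_c}\tau\,\#\{|m_k|>\tau\}\,d\tau\le \frac{5\beta_c}{2\pi c},
\]
which is uniform in $s$. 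We may take $C_c$ to be at least this quantity.

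\textbf{Sup bound for $A_\alpha^{-1}$.} Let $w$ have zero mean. Then $A_\alpha^{-1}w$ has Fourier coefficients $m_k\widehat w(k)$. By Cauchy--Schwarz on each block $I_s$ and Plancherel for $\Pi_jw$,
\[
\sum_{k\in I_s}|m_k\widehat w(k)|\le \Big(\sum_{k\in I_s}|m_k|^2\Big)^{1/2}\|\Pi_sw\|_2\le C_c^{1/2}\|\Pi_sw\|_2.
\]
Summing over $s$ shows that the Fourier series of $A_\alpha^{-1}w$ converges absolutely. Its sum is therefore a continuous function with
\[
\|A_\alpha^{-1}w\|_\infty\le C_c^{1/2}\mathcal B_1(w).
\]
After enlarging $C_c$ (replace $C_c$ by $\max\{C_c,C_c^{1/2}\}$), this is the stated bound. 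The distributional inverse of Remark~\ref{rem:multiplier} coincides with this continuous function, because their Fourier coefficients agree.

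\textbf{Upper bound for $L_N$.} The lower bound is Lemma~\ref{lem:LN}, so only the upper bound needs proof. The crude bound $L_N\le\|P_N\|_2$ is enough. Since $0\le\sigma_N\le1$,
\[
\|P_N\|_2^2=\sum_{s=0}^{\mathfrak n_N-1}\sum_{k\in I_s}\sigma_N(k)^2|m_k|^2\le C_c\,\mathfrak n_N.
\]
Next, $\mathfrak n_N=1+\lfloor\log_2N\rfloor\le 1+\ln N/\ln2\le (\ln2)^{-1}\ln(eN)$. Hence
\[
L_N\le \big(C_c(\ln 2)^{-1}\big)^{1/2}\sqrt{\ln(eN)},
\]
and a final enlargement of $C_c$ absorbs the constant. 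One could instead use the $L^4$ bound \eqref{eq:L4}, but $L^2$ already gives the correct order.

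\textbf{Main obstacle.} The only real point is the first step: the $\ell^2$ bound on each block must be uniform in $s$. It depends on the distribution estimate having the weak-$\ell^1$ form $\tau^{-1}$, so that the exponent $2>1$ makes the layer-cake integral converge near $\tau=0$. The remaining two steps are routine applications of Cauchy--Schwarz and Plancherel.
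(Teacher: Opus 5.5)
Your proposal is correct and follows essentially the same route as the paper. You transfer the counting bound \eqref{eq:distribution} to $m_k$ and apply the layer-cake formula, use blockwise Cauchy--Schwarz to get convergence of $\sum_s\sum_{k\in I_s}m_k\widehat w(k)e_k$ (you get absolute convergence, the paper uniform convergence of the blocks), and bound $L_N\le\|P_N\|_2$ via Parseval; your explicit tracking of $C_c^{1/2}$ versus $C_c$ just makes precise a constant the paper absorbs silently.
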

\begin{proof}
The argument used to prove \eqref{eq:distribution} also applies without the Fej\'er weights. Since $|m_k|\le\beta_c$,
\[
\sum_{k\in I_s}|m_k|^2
= 2\int_0^{\beta_c} t\,\#\{k\in I_s : |m_k|>t\}\,dt
\le \frac{5\beta_c}{2\pi c}.
\]
On each dyadic block, by the Cauchy--Schwarz inequality,
\[
\left\|\sum_{k\in I_s} m_k\widehat w(k)e_k\right\|_\infty
\le \left(\sum_{k\in I_s}|m_k|^2\right)^{1/2}\left(\sum_{k\in I_s}|\widehat w(k)|^2\right)^{1/2}
\le C_c\|\Pi_sw\|_2.
\]
Since $\sum_{s\ge0}\|\Pi_sw\|_2 < \infty$, the series $\sum_{s\ge0}\sum_{k\in I_s} m_k\widehat w(k)e_k$ converges uniformly to a continuous function of zero mean. Applying $A_\alpha$ in the sense of distributions recovers $w$, establishing the bound for the inverse operator.

Finally, H\"older's inequality $L_N \le \|P_N\|_2$, Parseval's identity, and the bound $0\le\sigma_N(k)\le1$ give
\[
L_N \le \|P_N\|_2 \le C_c\sqrt{1+\lfloor\log_2 N\rfloor} \le C_c\sqrt{\ln(eN)}.
\]
The lower bound is Lemma~\ref{lem:LN}.
\end{proof}

\begin{lemma}\label{lem:half-dini}
Every continuous periodic function $w$ satisfies
\[
\mathcal B_1(w) \le C\bigl(\|w\|_\infty + \mathcal D_{1/2}(w)\bigr).
\]
In particular, for every $0 \le \gamma \le 1/2$, the condition $\mathcal D_\gamma(w) < \infty$ implies $w \in B^0_{2,1}(\T)$.
\end{lemma}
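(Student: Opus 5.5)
The plan is to bound each dyadic block $\|\Pi_jw\|_2$ by the $L^2$ modulus of continuity at scale $2^{-j}$, and then convert the resulting sum over $j$ into the integral $\mathcal D_{1/2}(w)$ using Cauchy--Schwarz with a logarithmic weight. The zero-frequency term is trivial: $|\widehat w(0)|\le\|w\|_\infty$. For finitely many small $j$, say $j\le j_0$ with $2^{-j_0}\asymp e^{-1}$, we simply use $\|\Pi_jw\|_2\le\|w\|_2\le\|w\|_\infty$.

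\textbf{Block bound.} For $j\ge0$ and $2^j\le|k|<2^{j+1}$, choose $u=2^{-j-2}$, so that $|e^{2\pi iku}-1|\ge c_0>0$ uniformly in $k$; indeed $ku\in[1/4,1/2)$ in absolute value. By Plancherel,
\[
\|\Pi_jw\|_2\le c_0^{-1}\|\Pi_j(w(\cdot+u)-w)\|_2\le c_0^{-1}\|w(\cdot+u)-w\|_2\le c_0^{-1}\omega_w(2^{-j-2}).
\]

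\textbf{Key step.} The naive bound $\sum_j\omega_w(2^{-j})\asymp\int\omega_w(t)\,dt/t$ only gives the classical Dini condition. To gain the factor $(\ln(1/t))^{-1/2}$, I would use orthogonality on a dyadic range of blocks simultaneously. For $t\in[2^{-m-1},2^{-m}]$,
\[
\sum_{j\ge m}\|\Pi_jw\|_2^2\le C\|w(\cdot+t)-w\|_2^2\le C\omega_w(2^{-m})^2,
\]
since every frequency $|k|\ge2^m$ in a band of the form $|kt|\in[1/4,1/2]$ contributes; more carefully, one averages over $u\in[t/2,t]$ to obtain $\sum_{|k|\ge 1/t}|\widehat w(k)|^2\le C\omega_w(t)^2$. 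Set $E_m=\sum_{j\ge m}\|\Pi_jw\|_2^2\le C\omega_w(2^{-m})^2$. By Cauchy--Schwarz with the weight $j^{1/2}$,
\[
\sum_j\|\Pi_jw\|_2\le\Big(\sum_j j^{-1/2}\Big)^{\!\!}\ \text{(diverges)},
\]
so instead I would use the Abel-type estimate $\sum_j a_j\le C\sum_m E_m^{1/2}m^{-1/2}$ for $a_j=\|\Pi_jw\|_2$, $E_m=\sum_{j\ge m}a_j^2$. This inequality is the main obstacle. To prove it, group the indices $j$ into ranges $[2^r,2^{r+1})$. On each range, Cauchy--Schwarz gives $\sum a_j\le 2^{r/2}E_{2^r}^{1/2}$. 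Since $E_m$ is decreasing, $2^{r/2}E_{2^r}^{1/2}\le C\sum_{2^{r-1}\le m<2^r}m^{-1/2}E_m^{1/2}$.

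\textbf{Conclusion.} Combining, $\sum_j\|\Pi_jw\|_2\le C\|w\|_\infty+C\sum_m m^{-1/2}\omega_w(2^{-m})$. Since $\omega_w$ is monotone, this sum is comparable to $\int_0^{e^{-1}}\omega_w(t)\,t^{-1}(\ln(1/t))^{-1/2}\,dt=\mathcal D_{1/2}(w)$, the comparison costing at most a boundary term bounded by $\|w\|_\infty$. The final claim then follows from $\mathcal D_{1/2}\le C\mathcal D_\gamma$ for $\gamma\le1/2$, because $\ln(1/t)\ge1$ on $(0,e^{-1})$, together with the equivalence of $\mathcal B_1$ with the $B^0_{2,1}$ norm (Proposition~\ref{appB:block-calculus}).
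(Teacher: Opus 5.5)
Your proposal is correct and follows essentially the paper's route: the tail bound $\sum_{|k|\ge 2^m}|\widehat w(k)|^2\le C\omega_w(2^{-m})^2$ obtained by averaging translates, grouping the block indices into ranges $[2^r,2^{r+1})$ with Cauchy--Schwarz, and comparing the result with $\mathcal D_{1/2}(w)$ using monotonicity of $\omega_w$. Your intermediate sum $\sum_m m^{-1/2}\omega_w(2^{-m})$ is only a condensed form of the paper's direct comparison of $\sum_r 2^{r/2}\omega_w(2^{-2^r})$ with $\int_{b_{r-1}}^{b_r}\omega_w(e^{-s})s^{-1/2}\,ds$; in the write-up, keep only the averaged tail bound (a single translate $t$ can annihilate frequencies with $kt\in\Z$) and delete the abandoned $j^{1/2}$-weight line.
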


\begin{proof}
Write $a_j \defeq \|\Pi_j w\|_2$ and $E_m^2 \defeq \sum_{|k|\ge2^m}|\widehat w(k)|^2$. Taking $h_m \defeq 2^{-m}$, by Plancherel's identity, we have
\[
\frac{1}{2h_m}\int_{-h_m}^{h_m}\|w(\cdot+h)-w\|_2^2\,dh
= \sum_{k\in\Z}|\widehat w(k)|^2\left(2 - \frac{\sin(2\pi kh_m)}{\pi kh_m}\right),
\]
where the term in parentheses is zero at $k=0$ and is bounded below by $2-1/\pi$ when $|k|\ge2^m$. Since the left-hand side is at most $\omega_w(2^{-m})^2$, we obtain $E_m\le(2-1/\pi)^{-1/2}\omega_w(2^{-m})$. Grouping the block indices according to $2^r\le j<2^{r+1}$ and applying the Cauchy--Schwarz inequality gives
\[
\sum_{j\ge1}a_j \le \sum_{r\ge0}2^{r/2}E_{2^r} \le C\sum_{r\ge0}2^{r/2}\omega_w(2^{-2^r}).
\]
Set $b_r \defeq (\ln 2)2^r$ and $\widetilde\omega(s) \defeq \omega_w(e^{-s})$. Since $\widetilde\omega$ is nonincreasing, for every $r\ge2$ we have
\[
\int_{b_{r-1}}^{b_r}\widetilde\omega(s)s^{-1/2}\,ds
\ge 2\bigl(\sqrt{b_r}-\sqrt{b_{r-1}}\bigr)\widetilde\omega(b_r)
= 2\sqrt{\ln 2}(1-2^{-1/2})2^{r/2}\omega_w(2^{-2^r}).
\]
The intervals $[b_{r-1},b_r]$ are pairwise disjoint and contained in $[\ln 4,\infty)\subset[1,\infty)$. Setting $t=e^{-s}$ gives
\[
\sum_{r\ge2}\int_{b_{r-1}}^{b_r}\widetilde\omega(s)s^{-1/2}\,ds
\le \int_{\ln 4}^\infty\widetilde\omega(s)s^{-1/2}\,ds
= \int_0^{1/4}\frac{\omega_w(t)}{t\sqrt{\ln(1/t)}}\,dt
\le \mathcal D_{1/2}(w).
\]
The low-frequency terms corresponding to $r\in\{0,1\}$ and the initial dyadic block term $a_0 = \|\Pi_0 w\|_2$ are all bounded by $C\|w\|_\infty$.

Since $\ln(1/t)\ge1$ for $0<t\le e^{-1}$, we have $\mathcal D_{1/2}(w) \le \mathcal D_\gamma(w)$ for every $0\le\gamma\le 1/2$. Hence $\mathcal D_\gamma(w)<\infty$ implies $\mathcal B_1(w)<\infty$ and $w\in B^0_{2,1}(\T)$.
\end{proof}

\begin{remark}
\label{rem:half-dini-strict}
The converse of Lemma~\ref{lem:half-dini} is false. Consider
\[
w_*(x)\defeq\sum_{j\ge2}j^{-3/2}\cos(2\pi2^jx),\qquad
U_*(x)\defeq-\frac1{4\pi^2}\sum_{j\ge2}j^{-3/2}2^{-2j}\cos(2\pi2^jx).
\]
Clearly, $w_*\in C^0(\T)$ and
\[
\mathcal B_1(w_*)=2^{-1/2}\sum_{j\ge2}j^{-3/2}<\infty.
\]
For every $m\ge2$, the terms of $w_*(0)-w_*(h)$ are nonnegative, and, when $j\ge m$, each cosine term has zero mean on $[0,2^{-m}]$. Thus
\[
\omega_{w_*}(2^{-m})
\ge2^m\int_0^{2^{-m}}\bigl(w_*(0)-w_*(h)\bigr)\,dh
\ge\sum_{j\ge m}j^{-3/2}\ge\frac2{\sqrt m}.
\]
On the pairwise disjoint intervals $[(m-1)\ln2,m\ln2]$, we have $\omega_{w_*}(e^{-s})\ge\omega_{w_*}(2^{-m})$ and $s^{-1/2}\ge(m\ln2)^{-1/2}$. Integrating over each interval for $m\ge3$ gives
\[
\mathcal D_{1/2}(w_*)
=\int_1^\infty\omega_{w_*}(e^{-s})s^{-1/2}\,ds
\ge2\sqrt{\ln2}\sum_{m\ge3}\frac1m=\infty.
\]

This counterexample can be realized as the second derivative of a circle diffeomorphism. Clearly, $U_*\in C^2(\T)$ and $U_*''=w_*$. Since $\|U_*'\|_\infty\le(2\pi)^{-1}\sum_{j\ge2}2^{-j}=1/(4\pi)<1$, the mapping $F_\tau(x)\defeq x+\tau+U_*(x)$ belongs to $\mathcal D^2(\T)$ for every $\tau\in\R$. Define
\[
\tau_\alpha\defeq\inf\{\tau\in\R:\rho(F_\tau)\ge\alpha\}.
\]
The monotonicity and continuity of the rotation number with respect to the parameter, together with the estimate $|\rho(F_\tau)-\tau|\le\|U_*\|_\infty$, give $\rho(F_{\tau_\alpha})=\alpha$. Consequently,
\[
F_{\tau_\alpha}''=w_*\in B^0_{2,1}(\T),\qquad
\mathcal D_{1/2}(F_{\tau_\alpha}'')=\infty.
\]
Thus Theorem~\ref{thm:rigidity} applies to this map, whereas no rigidity theorem based on the half-Dini condition is applicable.
\end{remark}

\section{Normalized potentials and their differences under rotation}\label{sec:potentials}

For $N\in\N$, define
\[
\Theta_N(y)\defeq\sgn P_N(y),\qquad
V_N(x)\defeq\int_\T P_N(x+y)\Theta_N(y)\,dy,
\]
with the convention $\sgn(0)=0$. Since $|\Theta_N|\le1$ and $P_N\Theta_N=|P_N|$, we have
\[
V_N(0)=\int_\T|P_N(y)|\,dy=L_N,
\qquad |V_N(x)|\le L_N.
\]
Since $P_N$ has zero mean, Fubini's theorem gives $\int_\T V_N=0$. The finite Fourier expansion
\[
V_N(x)=\sum_{0<|k|\le N}a_k
\left(\int_\T e_k(y)\Theta_N(y)\,dy\right)e_k(x)
\]
shows that, although $\Theta_N$ is only bounded and measurable, $V_N$ is a real trigonometric polynomial of degree at most $N$. Define
\begin{equation}\label{eq:UN}
U_N\defeq\frac{V_N}{L_N}.
\end{equation}
Then
\begin{equation}\label{eq:UN-normalization}
U_N(0)=\|U_N\|_\infty=1,
\qquad \int_\T U_N=0.
\end{equation}
The above expansion gives $\widehat V_N(k)=a_k\widehat\Theta_N(-k)$ for $0<|k|\le N$. Parseval's identity and \eqref{eq:multiplier-bounds} imply
\begin{equation}\label{eq:potential-L2}
 \|V_N\|_2^2
 \le\beta_c^2\sum_{k\in\Z}|\widehat\Theta_N(k)|^2
 =\beta_c^2\|\Theta_N\|_2^2\le\beta_c^2,
 \qquad \|U_N\|_2\le\frac{\beta_c}{L_N}.
\end{equation}

To estimate the difference under rotation, write
\[
\mathcal Q_N\defeq\Delta_\alpha P_N,
\qquad
\mathsf F_N(x)\defeq\sum_{|k|\le N}\sigma_N(k)e_k(x).
\]
The multiplier identity gives
\[
D\mathcal Q_N=\mathsf F_N-1,
\qquad
\mathsf F_N(x)=\frac1{N+1}\left|\sum_{r=0}^Ne_r(x)\right|^2.
\]
Thus $\mathsf F_N$ is the nonnegative Fej\'er kernel and satisfies $\int_\T\mathsf F_N=1$. Pairing the positive and negative Fourier frequencies of $\mathcal Q_N$ gives $\mathcal Q_N(0)=0$, and hence
\[
\mathcal Q_N(x)=\int_0^x\mathsf F_N(t)\,dt-x.
\]
For $0\le x\le1$, the integral lies in $[0,1]$, so $\|\mathcal Q_N\|_\infty\le1$. Consequently,
\[
\Delta_\alpha V_N(x)
=\int_\T\mathcal Q_N(x+y)\Theta_N(y)\,dy,
\qquad \|\Delta_\alpha V_N\|_\infty\le1.
\]
Differentiating the finite Fourier sum under the integral sign gives
\[
D\Delta_\alpha V_N(x)
=\int_\T\mathsf F_N(x+y)\Theta_N(y)\,dy
-\int_\T\Theta_N(y)\,dy.
\]
Each term on the right has absolute value at most 1. Dividing by $L_N$, we obtain
\begin{equation}\label{eq:difference-estimates}
\|\Delta_\alpha U_N\|_\infty\le\frac1{L_N},
\qquad
\|D\Delta_\alpha U_N\|_\infty\le\frac2{L_N}.
\end{equation}

\section{Analytic perturbations with two independent parameters}\label{sec:seeds}

In this section, we construct analytic conjugacies using two independent parameters: the amplitude $T$ and the Fourier degree $N$. Fix $T>0$ and define
\begin{align}
\psi_{N,T}&\defeq TU_N,
& \mathcal Z_{N,T}&\defeq\int_\T e^{\psi_{N,T}(x)}\,dx,\label{eq:potential-NT}\\
h_{N,T}(x)&\defeq \mathcal Z_{N,T}^{-1}\int_0^x e^{\psi_{N,T}(s)}\,ds,
&g_{N,T}&\defeq h_{N,T}R_\alpha h_{N,T}^{-1}.\label{eq:seed-NT}
\end{align}
Since $\psi_{N,T}$ is periodic and its exponential is strictly positive, we have $h'_{N,T} > 0$ and $h_{N,T}(x+1) = h_{N,T}(x)+1$. By the inverse function theorem, both $h_{N,T}$ and $g_{N,T}$ are real-analytic lifts of orientation-preserving homeomorphisms. The invariance of the rotation number under conjugacy gives $\rho(g_{N,T})=\alpha$; see Proposition~\ref{lem:rotation-basic}. In what follows, we fix $T$ and let $N\to\infty$; the constants may depend on $\alpha,c,T$, but are independent of $N$.

Since $\int_\T U_N=0$ and $|U_N|\le1$, the elementary inequality $1+s\le e^s\le1+s+\tfrac12e^T s^2$ for $|s|\le T$ gives
\begin{equation}\label{eq:seed-normalizer}
1\le\mathcal Z_{N,T}
\le1+\frac{\beta_c^2T^2e^T}{2L_N^2}.
\end{equation}
By Lemma~\ref{lem:LN}, we have $\mathcal Z_{N,T}\le2$ whenever $\ln N\ge\beta_c^2T^2e^T/(2\Gamma_c^2)$. Hence
\begin{equation}\label{eq:seed-slope}
h_{N,T}'(0)=\frac{e^T}{\mathcal Z_{N,T}}\ge\frac{e^T}{2}.
\end{equation}

\subsection{Estimates through the third derivative}\label{subsec:seed-distance}
Temporarily suppress the subscripts $N,T$, and write $a\defeq D\Delta_\alpha\psi$ and $\beta\defeq\Delta_\alpha\psi$. Since $h'=e^\psi/\mathcal Z$, differentiating $g\circ h=h\circ R_\alpha$ gives $(g'\circ h)h'=h'\circ R_\alpha$. Successive differentiation, together with the expression for $h'$, yields
\begin{align}
g'\circ h&=e^\beta,\label{eq:seed-first}\\
g''\circ h&=\mathcal Ze^{\beta-\psi}a,\label{eq:seed-second}\\
g'''\circ h&=\mathcal Z^2e^{\beta-2\psi}
\bigl(a'+(\beta'-\psi')a\bigr).
\label{eq:seed-third}
\end{align}
By \eqref{eq:difference-estimates},
\[
\|\beta\|_\infty\le\frac{T}{L_N},\qquad
\|a\|_\infty\le\frac{2T}{L_N},\qquad
e^{-T}\le \mathcal Z\le e^T.
\]
Since $\rho(g)=\alpha$, the periodic displacement $d(x)\defeq g(x)-x-\alpha$ must vanish at some point, so $\|d\|_\infty\le\|d'\|_\infty=\|g'-1\|_\infty$. Thus
\begin{equation}\label{eq:seed-distance}
d_2(g_{N,T},R_\alpha)
\le\max\left\{e^{T/L_N}-1,
\frac{2T}{L_N}e^{2T+T/L_N}\right\}.
\end{equation}
Write $J_N\defeq\ln N$. For $N\ge4c^{-2}$, Lemma~\ref{lem:LN} gives $L_N\ge\Gamma_c\sqrt{J_N}$. In particular, if $L_N\ge\max(1,T)$, then
\begin{equation}\label{eq:seed-C2-rate}
d_2(g_{N,T},R_\alpha)\le A_TJ_N^{-1/2},\qquad
A_T\defeq\Gamma_c^{-1}\max\{eT,2Te^{2T+1}\}.
\end{equation}
The trigonometric polynomials $a,\beta,\psi$ all have degree at most $N$. Bernstein's inequality gives
\[
\|a'\|_\infty\le\frac{4\pi NT}{L_N},\qquad
\|\beta'-\psi'\|_\infty
\le2\pi N\left(T+\frac{T}{L_N}\right).
\]
Under the same assumption $L_N\ge\max(1,T)$, we have $\mathcal Z^2e^{\beta-2\psi}\le e^{4T+1}$ and
\[
\|a'+(\beta'-\psi')a\|_\infty
\le4\pi N\left(\frac{T}{L_N}
       +\frac{T^2}{L_N}+\frac{T^2}{L_N^2}\right)
\le4\pi N(T+2T^2).
\]
Substitution into \eqref{eq:seed-third} yields
\begin{equation}\label{eq:seed-C3-rate}
\|g_{N,T}'''\|_\infty\le \mathfrak{D}_TN,
\qquad \mathfrak{D}_T\defeq4\pi e^{4T+1}(T+2T^2).
\end{equation}
For fixed $T$, the $C^2$ error tends to zero, whereas the third derivative grows at most linearly in $N$.

\subsection{Finite iterates with large derivatives}\label{subsec:iterate-selection}
In this subsection, we construct a positive integer $\ell_N$ depending only on $N$ and $\alpha$ to establish the derivative growth bound~\eqref{eq:seed-growth}. Since $U_N$ has zero mean and degree at most $N$,
\[
\sum_{r=0}^NU_N\left(\frac r{N+1}\right)=0,
\qquad U_N(0)=1.
\]
Choose rational approximations $z_{N,r}$ satisfying
\[
\left|z_{N,r}-U_N\left(\frac r{N+1}\right)\right|
\le\frac1{8N}\qquad(1\le r\le N)
\]
and let $\iota_N$ be the smallest index at which $z_{N,r}$ attains its minimum. Let $r_*$ be an index attaining $\min_{1\le r\le N}U_N(r/(N+1))$. Since this minimum is at most $-1/N$, we obtain
\[
U_N\left(\frac{\iota_N}{N+1}\right)
\le z_{N,\iota_N}+\frac1{8N}
\le z_{N,r_*}+\frac1{8N}
\le-\frac1N+\frac2{8N}=-\frac3{4N}.
\]
Thus the point $y_N\defeq\iota_N/(N+1)$ satisfies $U_N(y_N)\le -3/(4N)<0$.

By Bernstein's inequality and \eqref{eq:UN-normalization}, $\|U_N'\|_\infty\le \Lambda_N\defeq2\pi N$. Define
\begin{equation}\label{eq:MQ}
\mathfrak M_N\defeq\max\{N+1,24\Lambda_N\},\qquad
j_N\defeq2\bigl(\lfloor\log_2(\mathfrak M_N+1)\rfloor+1\bigr)+2.
\end{equation}
Let $p_j/q_j$ be the convergents of $\xi\defeq\alpha-\lfloor\alpha\rfloor$. The inequality $q_{j+2}\ge2q_j$ gives $q_{2k}\ge2^k$, so
\[
q_{j_N}\ge2^{j_N/2}
=2^{\lfloor\log_2(\mathfrak M_N+1)\rfloor+2}
>2(\mathfrak M_N+1)>\mathfrak M_N.
\]
Write $j=j_N$ and set
\[
\mathfrak b_N\defeq\left\lfloor-q_jy_N+\tfrac12\right\rfloor,
\qquad \nu_N\defeq \mathfrak b_N\pmod{q_j},\quad 0\le\nu_N<q_j.
\]
Since $\|y_N\|\ge1/(N+1)$ and $q_j>N+1$, we have $\nu_N\ne0$. As $\gcd(p_j,q_j)=1$, define $\ell_N\in\{1,\ldots,q_j-1\}$ to be the unique integer solution of the congruence
\begin{equation}\label{eq:witness-rule}
\ell_Np_j\equiv\nu_N\pmod{q_j}
\end{equation}
and note that
\[
\|\ell_N\alpha+y_N\|
<\frac1{q_{j+1}}+\frac1{2q_j}
\le\frac3{2q_j}<\frac1{16\Lambda_N}.
\]
Together with $U_N(0)=1$ and $\|U_N'\|_\infty\le\Lambda_N$, this gives $U_N(y_N+\ell_N\alpha)>15/16$. Since $U_N(y_N)<0$, we have
\begin{equation}\label{eq:seed-growth}
Dg_{N,T}^{\ell_N}(h_{N,T}(y_N))
=\exp\!\left(T[U_N(y_N+\ell_N\alpha)-U_N(y_N)]\right)
>e^{15T/16}.
\end{equation}
This establishes a lower bound for derivative growth depending only on the parameter $T$, while $g_{N,T}\to R_\alpha$ in $C^2(\T)$ as $N\to\infty$.

\section{A convergent construction with Besov control}\label{sec:construction}

Each step of the inductive construction consists of two successive perturbations. The first controls derivatives and slopes, producing quantitative growth of the derivative $\Vert{}DF_n^{\ell_n}\Vert{}_\infty$ of a finite iterate and the secant slope $\mathfrak S_n(F_n)/t_n$; the second introduces a polynomial correction that establishes a positive lower bound for the truncated sum $\mathfrak E_{M_n}(F_n)$ of the sharp dyadic frequency blocks of the second derivative. The perturbation sequence is absolutely summable in the $C^2$ topology and the target Besov topology, and these three lower bounds are preserved uniformly in open neighborhoods throughout the subsequent iterations. Finally, the real-analytic linearization theorem is used to update the conjugacy $H_n$ required for the next stage.

\subsection{Stability under iteration and conjugation}
\label{subsec:stability}

The rotation number has the following properties; see \cite{Her79}.
\begin{proposition}\label{lem:rotation-basic}
For orientation-preserving lifts $F \in \mathcal D^0(\T)$, the rotation number is monotone with respect to the pointwise partial order, continuous in the uniform topology, and invariant under conjugation by orientation-preserving homeomorphisms.
\end{proposition}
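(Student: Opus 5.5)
The plan is to prove the three properties of the rotation number separately, working on lifts $F\in\mathcal D^0(\T)$. The basic tool is the displacement $\phi_F(x)\defeq F(x)-x$. It is $1$-periodic because $F(x+1)=F(x)+1$. The iterates $F^n$ again commute with unit translation, and we have the cocycle identity
\[
F^n(x)-x=\sum_{i=0}^{n-1}\phi_F(F^i(x)).
\]
First I will record the standard a priori estimate. Since $F^n-\id$ is periodic and $F^n$ is increasing, for any $x,y$ we have
\[
\big|(F^n(x)-x)-(F^n(y)-y)\big|<1 .
\]
This follows by reducing $y$ to $[x,x+1)$ and using monotonicity together with $F^n(x+1)=F^n(x)+1$. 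Combined with subadditivity of $a_n\defeq\sup_x(F^n(x)-x)$ (Fekete's lemma), this gives existence of the limit and its independence of $x$. I will take these as the definition's justification, which the paper already assumes.

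\textbf{Monotonicity.} Suppose $F\le G$ pointwise. By induction, $F^n\le G^n$: if $F^{n}(x)\le G^{n}(x)$, then
\[
F^{n+1}(x)=F(F^n(x))\le F(G^n(x))\le G(G^n(x)),
\]
using that $F$ is increasing. Dividing $F^n(x)-x\le G^n(x)-x$ by $n$ and letting $n\to\infty$ gives $\rho(F)\le\rho(G)$.

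\textbf{Continuity.} This is the step needing a quantitative bound. The a priori estimate gives, for every $n$ and $x$,
\[
|F^n(x)-x-n\rho(F)|<1 .
\]
Indeed, if $F^n(x)-x\ge n\rho+1$ for some $x$, then $F^n(y)-y>n\rho$ for all $y$. Iterating $k$ times gives $F^{kn}(y)-y>kn\rho+\delta k$ with uniform $\delta>0$, by compactness and periodicity, which contradicts the definition of $\rho$. The reverse inequality is symmetric.

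Fix $\varepsilon>0$ and choose $n$ with $2/n<\varepsilon$. The map $F\mapsto F^n$ is continuous in the uniform topology on $\mathcal D^0(\T)$. To see this, induct using
\[
|G^{n}(x)-F^{n}(x)|\le |G(G^{n-1}x)-F(G^{n-1}x)|+|F(G^{n-1}x)-F(F^{n-1}x)| ,
\]
where the last term is small by uniform continuity of $F$, which is itself a consequence of periodicity of $F-\id$. Hence, if $\|G-F\|_\infty$ is small, then $\|G^n-F^n\|_\infty<1$. Therefore
\[
n|\rho(G)-\rho(F)|<|G^n(0)-F^n(0)|+2<3,
\]
so $|\rho(G)-\rho(F)|<3/n$, which can be made $<\varepsilon$. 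I expect this continuity step to be the main, though mild, obstacle, because it requires the uniform bound $|F^n(x)-x-n\rho|<1$ rather than mere convergence.

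\textbf{Conjugacy invariance.} Let $H\in\mathcal D^0(\T)$ and $G=H F H^{-1}$. Then
\[
G^n(x)=H(F^n(H^{-1}x)).
\]
Since $H-\id$ is periodic and hence bounded by some $M$, and likewise $H^{-1}-\id$, we get
\[
|G^n(x)-x|\le |F^n(H^{-1}x)-H^{-1}x|+2M .
\]
The same bound holds with the roles reversed. Dividing by $n$ gives $\rho(G)=\rho(F)$.

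For a general orientation-preserving circle homeomorphism $h$, I will note that any lift is of this form. Alternatively, two lifts of $hfh^{-1}$ differ by an integer, and one chooses the lift $HFH^{-1}$ so that the rotation numbers agree exactly, as needed for $\rho(g_{N,T})=\alpha$.
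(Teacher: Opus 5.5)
The paper gives no proof of this proposition; it only cites Herman~\cite{Her79}, and your argument is the standard one. The monotonicity and continuity parts are correct. The a priori bound $|F^n(x)-x-n\rho(F)|<1$, the induction $F^n\le G^n$, and continuity of $F\mapsto F^n$ for fixed $n$ are all fine. There is one cosmetic slip: choose $n$ with $3/n<\varepsilon$ rather than $2/n<\varepsilon$, since your final bound is $|\rho(G)-\rho(F)|<3/n$.

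The conjugacy-invariance step does not prove what you claim as written. You use the inequality $|G^n(x)-x|\le|F^n(H^{-1}x)-H^{-1}x|+2M$ together with its reverse. These only give $|\rho(G)|\le|\rho(F)|$ and $|\rho(F)|\le|\rho(G)|$, that is, $|\rho(G)|=|\rho(F)|$. The sign is lost because you take absolute values of the two displacements separately. You need the signed comparison. With $y\defeq H^{-1}(x)$,
\[
G^n(x)-x=\bigl(H(F^n(y))-F^n(y)\bigr)+\bigl(F^n(y)-y\bigr)+\bigl(y-H(y)\bigr),
\]
so $\bigl|(G^n(x)-x)-(F^n(y)-y)\bigr|\le 2\|H-\id\|_\infty$. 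Dividing by $n$ and using that the limit defining $\rho$ does not depend on the base point gives $\rho(G)=\rho(F)$ directly. With this version you need neither the reversed roles nor the bound on $H^{-1}-\id$.
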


\begin{lemma}\label{lem:iteration}
Let $u,v\in\mathcal D^2(\T)$ satisfy $\|u'\|_\infty,\|v'\|_\infty\le2$ and $\|u''\|_\infty,\|v''\|_\infty\le1$. For every integer $\ell\ge1$,
\begin{equation}\label{eq:iteration-stability}
\|Du^\ell-Dv^\ell\|_\infty\le C_\ell^{\mathrm{it}}d_2(u,v),
\qquad C_\ell^{\mathrm{it}}\defeq2^{\ell-1}(2^\ell-1).
\end{equation}
Moreover,
\begin{equation}\label{eq:iterate-value-stability}
\|u^k-v^k\|_\infty\le(2^k-1)d_2(u,v),\qquad k\ge1.
\end{equation}
\end{lemma}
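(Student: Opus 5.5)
We argue by induction on $\ell$, first proving the value estimate \eqref{eq:iterate-value-stability} and then the derivative estimate \eqref{eq:iteration-stability}. Throughout, periodicity of the displacements lets us evaluate sup norms over $\R$ or over $\T$ interchangeably.

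\emph{Value estimate.} Write
\[
u^k - v^k = \bigl(u\circ u^{k-1} - u\circ v^{k-1}\bigr) + \bigl(u\circ v^{k-1} - v\circ v^{k-1}\bigr).
\]
The first term is bounded by $\|u'\|_\infty\|u^{k-1}-v^{k-1}\|_\infty \le 2\|u^{k-1}-v^{k-1}\|_\infty$ by the mean value theorem. The second is bounded by $\|u-v\|_\infty\le d_2(u,v)$. Setting $e_k\defeq\|u^k-v^k\|_\infty$, this gives $e_k\le 2e_{k-1}+d_2$ with $e_1\le d_2$, and hence $e_k\le(2^k-1)d_2$.

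\emph{Derivative estimate.} By the chain rule, $Du^\ell=\prod_{i=0}^{\ell-1}u'\circ u^i$, and similarly for $v$. We telescope the product difference as
\[
Du^\ell - Dv^\ell = \sum_{i=0}^{\ell-1}\Bigl(\prod_{m<i} u'\circ u^m\Bigr)\bigl(u'\circ u^i - v'\circ v^i\bigr)\Bigl(\prod_{m>i} v'\circ v^m\Bigr).
\]
Each of the two products is bounded by $2^{\ell-1}$ in total, using $\|u'\|_\infty,\|v'\|_\infty\le2$. For the middle factor we split
\[
|u'\circ u^i - v'\circ v^i| \le |u'\circ u^i - u'\circ v^i| + |u'\circ v^i - v'\circ v^i| \le \|u''\|_\infty\,(2^i-1)d_2 + d_2 \le 2^i d_2.
\]
Here we used $\|u''\|_\infty\le1$, the value estimate, and $\|u'-v'\|_\infty\le d_2$; the bound also holds for $i=0$. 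Summing over $i$ gives
\[
\|Du^\ell-Dv^\ell\|_\infty \le 2^{\ell-1}\sum_{i=0}^{\ell-1}2^i d_2 = 2^{\ell-1}(2^\ell-1)\,d_2 = C^{\mathrm{it}}_\ell\, d_2,
\]
which is exactly the constant in \eqref{eq:iteration-stability}.

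The only point requiring care is the bookkeeping of the powers of $2$. If the factors are distributed as above, namely $i$ factors from $u$, $\ell-1-i$ factors from $v$, and one middle factor, the constant comes out exactly right with no further work.
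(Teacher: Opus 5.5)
Your proof is correct and follows the paper's argument: the recursion $e_k\le 2e_{k-1}+d_2(u,v)$ controls the orbit separation, and the telescoped chain-rule product has its middle factor bounded by $2^i d_2(u,v)$ using $\|u''\|_\infty\le1$ and $\|u'-v'\|_\infty\le d_2(u,v)$. You also write out the induction behind the value estimate, which the paper leaves implicit.
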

\begin{proof}
Write $\delta=d_2(u,v)$, and set $x_k=u^k(x)$ and $\widetilde x_k=v^k(x)$.
Induction on $k$ gives
\[
|x_k-\widetilde x_k|\le(2^k-1)\delta.
\]
Since $\|u''\|_\infty\le1$, we have
\[
|u'(x_k)-v'(\widetilde x_k)|
\le |u'(x_k)-u'(\widetilde x_k)| + |u'(\widetilde x_k)-v'(\widetilde x_k)|
\le |x_k-\widetilde x_k| + \delta
\le 2^k\delta.
\]
The chain rule then yields
\[
|Du^\ell(x)-Dv^\ell(x)|
\le\sum_{k=0}^{\ell-1}2^{\ell-1}|u'(x_k)-v'(\widetilde x_k)|
\le 2^{\ell-1}\sum_{k=0}^{\ell-1}2^k\delta
=C_\ell^{\mathrm{it}}\delta.\qedhere
\]
\end{proof}

For $H\in\mathcal D^3(\T)$, write
\[
A_j(H)\defeq\|H^{(j)}\|_\infty,
\qquad
\widetilde A_j(H)\defeq\|(H^{-1})^{(j)}\|_\infty,
\qquad 1\le j\le3,
\]
and abbreviate these quantities by $A_j$ and $\widetilde A_j$. Define
\begin{equation}\label{eq:conjugacy-constant}
C_{\mathrm{conj}}(H)\defeq\max\left\{
A_1,\ \widetilde A_1(2A_2+A_1),\
\widetilde A_1^2(4A_3+5A_2+A_1)
+\widetilde A_2(2A_2+A_1)\right\}.
\end{equation}
For any $u,v\in\mathcal D^2(\T)$ satisfying the assumptions of Lemma~\ref{lem:iteration}, we have
\begin{equation}\label{eq:conjugation-stability}
d_2(HuH^{-1},HvH^{-1})\le C_{\mathrm{conj}}(H)d_2(u,v).
\end{equation}
Write $Q=H^{-1}$ and $B_u=H\circ u$. By the chain rule,
\[
D(B_u\circ Q)=(B_u'\circ Q)Q',\qquad
D^2(B_u\circ Q)=(B_u''\circ Q)(Q')^2+(B_u'\circ Q)Q'',
\]
where $B_u'=H'(u)u'$ and $B_u''=H''(u)(u')^2+H'(u)u''$. Setting $\delta=d_2(u,v)$, we have
\begin{align*}
\|B_u-B_v\|_\infty&\le A_1\delta,\\
\|B_u'-B_v'\|_\infty&\le(2A_2+A_1)\delta,\\
\|B_u''-B_v''\|_\infty&\le(4A_3+5A_2+A_1)\delta.
\end{align*}
Substituting these bounds, together with $\|Q'\|_\infty=\widetilde A_1$ and $\|Q''\|_\infty=\widetilde A_2$, into the differentiation formulas for compositions proves~\eqref{eq:conjugation-stability}.

\begin{lemma}\label{lem:outer-Besov}
Fix $H\in\mathcal D^3(\T)$ and $T>0$, and set
\[
v_N\defeq Hg_{N,T}H^{-1}-HR_\alpha H^{-1}.
\]
There exist an explicitly bounded constant $C_{H,T}\ge1$ and a threshold $J_0(T)$ such that, whenever $N\ge4c^{-2}$ and $J_N\ge J_0(T)$,
\begin{align}
\|v_N\|_{C^2}&\le C_{H,T}J_N^{-1/2},&
\|v_N'''\|_\infty&\le C_{H,T}N,
\label{eq:outer-C2-C3}\\
\mathcal B_q(v_N'')&\le3C_{H,T}
\bigl(J_N^{1/q-1}+N^{-1}\bigr),
&&1\le q\le2,\label{eq:outer-Besov}\\
\mathcal D_\gamma(v_N'')&\le
\frac{3C_{H,T}}{1-\gamma}J_N^{1/2-\gamma},
&&\tfrac12<\gamma<1.\label{eq:outer-Dini}
\end{align}
\end{lemma}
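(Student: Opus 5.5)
The plan is to prove the $C^2$/$C^3$ bounds first, then deduce the Besov and Dini bounds from them by interpolation between the small $C^2$ norm and the large $C^3$ norm.

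\emph{$C^2$ and $C^3$ bounds.} The $C^2$ estimate in \eqref{eq:outer-C2-C3} follows from the conjugation stability \eqref{eq:conjugation-stability} applied with $u=g_{N,T}$ and $v=R_\alpha$. The hypotheses of Lemma~\ref{lem:iteration} hold once $J_N\ge J_0(T)$, since then \eqref{eq:seed-C2-rate} gives $d_2(g_{N,T},R_\alpha)\le A_TJ_N^{-1/2}\le1$. This yields $\|v_N\|_{C^2}\le C_{\mathrm{conj}}(H)A_TJ_N^{-1/2}$. For the third derivative, I differentiate $H\circ g\circ H^{-1}$ three times with the chain rule (Fa\`a di Bruno). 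Every term involving $g'''$ is bounded by $A_1\widetilde A_1^3\mathfrak D_TN$ through \eqref{eq:seed-C3-rate}. The remaining terms involve only $g,g',g''$ and derivatives of $H,H^{-1}$ up to order three. They are $O(1)$, and the rotation contributes its own bounded third derivative. Hence $\|v_N'''\|_\infty\le C_{H,T}N$ for $N\ge1$. The constant $C_{H,T}$ is taken as the maximum of these explicit expressions, which is where ``explicitly bounded'' comes from.

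\emph{Besov bound.} Write $w=v_N''$, which is periodic since $v_N$ is a periodic displacement difference. Its mean is zero, because $\int_\T w=v_N'(1)-v_N'(0)=0$, so $\mathcal B_q(w)=\|(\|\Pi_jw\|_2)_j\|_{\ell^q}$. I bound each block in two ways: $\|\Pi_jw\|_2\le\|w\|_2\le C_{H,T}J_N^{-1/2}$, and, by Plancherel, $\|\Pi_jw\|_2\le(2\pi2^j)^{-1}\|Dw\|_2\le 2^{-j}C_{H,T}N$. I split at $j_0=\lfloor\log_2N\rfloor$. The high blocks $j>j_0$ form a geometric sum bounded by $C_{H,T}N\cdot2^{-j_0}\lesssim C_{H,T}$, and this is not small. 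So the split must be refined: I split instead at $2^{j_1}\approx N$ times a large factor. Alternatively, I note that $v_N$ is \emph{not} band-limited, because $H$ is only $C^3$, so the tail must come from the $C^3$ bound. The clean way is as follows. For $j\le\log_2 N$ there are about $\log_2N\asymp J_N$ blocks, each of size $\le C_{H,T}J_N^{-1/2}$, giving $\ell^q$ norm $\le C_{H,T}J_N^{1/q-1/2}$. This is too weak by itself, so I instead use the $L^2$ orthogonality $\sum_j\|\Pi_jw\|_2^2\le\|w\|_2^2\le C_{H,T}^2J_N^{-1}$. Combining this with H\"older over the $\asymp J_N$ low blocks gives $\ell^q$ norm $\le (CJ_N)^{1/q-1/2}\,C_{H,T}J_N^{-1/2}= C\,C_{H,T}J_N^{1/q-1}$ for $1\le q\le2$. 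For the blocks with $2^j\ge N^2$, the Plancherel bound gives a geometric tail $\le C_{H,T}N\cdot N^{-2}=C_{H,T}N^{-1}$. The intermediate range $N<2^j<N^2$ adds only $\asymp J_N$ more blocks, which H\"older with orthogonality absorbs into the same $J_N^{1/q-1}$ bound. Tracking the constants to obtain the factor $3$ is routine after absorbing absolute constants into $C_{H,T}$.

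\emph{Dini bound, the main obstacle.} Here the modulus $\omega_w(t)$ of $w=v_N''$ satisfies $\omega_w(t)\le\min\{2\|w\|_\infty,\,t\|w'\|_\infty\}\le C_{H,T}\min\{J_N^{-1/2},tN\}$. I split $\int_0^{e^{-1}}$ at $t=N^{-1}$. On $[0,1/N]$ the contribution is $\le C_{H,T}N\int_0^{1/N}(\ln(1/t))^{-\gamma}dt\le C_{H,T}$ times a negligible quantity, since that integral is $\le N^{-1}J_N^{-\gamma}$. On $[1/N,e^{-1}]$, substituting $s=\ln(1/t)$ gives $C_{H,T}J_N^{-1/2}\int_1^{J_N}s^{-\gamma}ds\le C_{H,T}J_N^{-1/2}\,J_N^{1-\gamma}/(1-\gamma)$, which is the claimed rate. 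The delicate point is keeping the $\|w\|_\infty$ bound uniform, i.e.\ using the $C^2$ rather than an $L^2$ estimate, and checking that the low-$t$ piece is dominated by the main term. That holds because $J_N^{-\gamma}N^{-1}\cdot N$ is at most $J_N^{1/2-\gamma}/(1-\gamma)$ once $J_N\ge1$. Collecting the pieces gives the factor $3C_{H,T}/(1-\gamma)$.
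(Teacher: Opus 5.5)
Your proposal is correct and follows essentially the same route as the paper. It uses the conjugation stability \eqref{eq:conjugation-stability} and the chain rule for the $C^2$/$C^3$ bounds, then H\"older with $L^2$ orthogonality over the $O(J_N)$ blocks with $2^s\lesssim N^2$ plus a Bernstein tail beyond them, and finally splits the Dini integral at $t=N^{-1}$. Two small points need fixing: $\omega_w(t)\le 2\|w\|_\infty$ gives $2C_{H,T}J_N^{-1/2}$ rather than $C_{H,T}J_N^{-1/2}$, and it is this factor $2$ together with the $J_N^{-\gamma}$ tail that yields $3/(1-\gamma)$; also $J_0(T)$ must be chosen so that $L_N\ge\max\{1,T\}$ and $J_N\ge1$, as needed for \eqref{eq:seed-C2-rate}, \eqref{eq:seed-C3-rate}, and the block count.
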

\begin{proof}
Define
\[
J_0(T)\defeq\max\{1,\Gamma_c^{-2},(T/\Gamma_c)^2,4A_T^2\}.
\]
When $N\ge4c^{-2}$ and $J_N\ge\max\{\Gamma_c^{-2},(T/\Gamma_c)^2\}$, Lemma~\ref{lem:LN} gives $L_N\ge\Gamma_c\sqrt{J_N}\ge\max\{1,T\}$. By \eqref{eq:seed-C2-rate} and $J_N\ge4A_T^2$, we obtain
\[
d_2(g_{N,T},R_\alpha)\le A_TJ_N^{-1/2}\le\frac12.
\]
In particular, $\|g'_{N,T}\|_\infty\le 3/2$ and $\|g''_{N,T}\|_\infty\le 1/2$, so the assumptions of Lemma~\ref{lem:iteration} hold. Equation~\eqref{eq:conjugation-stability} gives
\[
\|v_N\|_{C^2}\le C_{\mathrm{conj}}(H)d_2(g_{N,T},R_\alpha)\le C_{\mathrm{conj}}(H)A_TJ_N^{-1/2}.
\]
Applying the chain rule to both terms in $v_N=H\circ g_{N,T}\circ H^{-1}-H\circ R_\alpha\circ H^{-1}$, and using $\|g'_{N,T}\|_\infty\le2$, $\|g''_{N,T}\|_\infty\le1$, $\|g'''_{N,T}\|_\infty\le \mathfrak{D}_TN$ from~\eqref{eq:seed-C3-rate}, and $N\ge1$, gives $\|v_N'''\|_\infty\le\widetilde C_{H,T}N$, where
\[
\widetilde C_{H,T}\defeq \widetilde A_1^3(9A_3+6A_2+A_1\mathfrak{D}_T)+3\widetilde A_1\widetilde A_2(5A_2+A_1)+3A_1\widetilde A_3.
\]
Taking
\[
C_{H,T}\defeq\max\{1,C_{\mathrm{conj}}(H)A_T,\widetilde C_{H,T}\},
\]
we obtain $\|v_N'''\|_\infty\le C_{H,T}N$.

Write $w\defeq v_N''$. By \eqref{eq:outer-C2-C3}, $\|w\|_2\le\|v_N\|_{C^2}\le C_{H,T}J_N^{-1/2}$ and $\|w'\|_2\le\|v_N'''\|_\infty\le C_{H,T}N$. Set $M\defeq\lceil 2\log_2N\rceil$. Then $2^{-M}\le N^{-2}$; since $J_N\ge1$, we also have $M+1\le(2/\ln 2+2)J_N<5J_N$. For $1\le q\le2$, the discrete H\"older inequality and the orthogonality of the dyadic frequency blocks give
\[
\left(\sum_{s=0}^M\|\Pi_sw\|_2^q\right)^{1/q}
\le(M+1)^{1/q-1/2}\left(\sum_{s=0}^M\|\Pi_sw\|_2^2\right)^{1/2}
\le\sqrt5C_{H,T}J_N^{1/q-1}.
\]
For $s>M$, the Bernstein estimate on the frequency band $2^s\le|k|<2^{s+1}$ gives $\|\Pi_s w\|_2 \le (2\pi 2^s)^{-1}\|w'\|_2$. Since $q\ge 1$,
\[
\left(\sum_{s=M+1}^\infty\|\Pi_sw\|_2^q\right)^{1/q}
\le \sum_{s=M+1}^\infty\|\Pi_sw\|_2
\le\frac{C_{H,T}}{2\pi}\sum_{s=M+1}^\infty 2^{-s}
\le\frac{C_{H,T}}{2\pi}N^{-1}.
\]
Combining this with the low-frequency estimate proves~\eqref{eq:outer-Besov}.

Finally, we have $\omega_w(t)\le\min\{2\|w\|_\infty,\|w'\|_\infty t\}\le C_{H,T}\min\{2J_N^{-1/2},Nt\}$. Setting $s=\ln(1/t)$ and splitting the integral at $s=\ln N=J_N$, we obtain
\[
\mathcal D_\gamma(w)
=\int_1^\infty\omega_w(e^{-s})s^{-\gamma}\,ds
\le C_{H,T}\left(\frac2{\sqrt{J_N}}\int_1^{J_N}s^{-\gamma}\,ds+\int_{J_N}^\infty e^{J_N-s}s^{-\gamma}\,ds\right).
\]
For $1/2<\gamma<1$, the first integral term is strictly less than $\frac{2}{1-\gamma}J_N^{1/2-\gamma}$. For the second, the inequality $s^{-\gamma}\le J_N^{-\gamma}$ gives
\[
\int_{J_N}^\infty e^{J_N-s}s^{-\gamma}\,ds
\le J_N^{-\gamma}\int_{J_N}^\infty e^{J_N-s}\,ds
= J_N^{-\gamma}
\le \frac{1}{1-\gamma}J_N^{1/2-\gamma}.
\]
Combining the two terms gives~\eqref{eq:outer-Dini}. For $\gamma=1$, the same argument yields
\[
\mathcal D_1(w)\le\frac{3C_{H,T}(1+\ln J_N)}{\sqrt{J_N}}.\qedhere
\]
\end{proof}

\subsection{Regularity of the normalized conjugacy}
\label{subsec:conjugacy-regularity}

We use the convention that the forward conjugacy satisfies
\begin{equation}\label{eq:forward-conjugacy}
 F\circ h=h\circ R_\alpha,\qquad h(0)=0.
\end{equation}
The displacements $h-\id$ and $h^{-1}-\id$ are periodic functions. The conjugacy denoted by $h$ in \cite{KO89} corresponds to the inverse map $h^{-1}$ used here.

For the continued fraction denominators $q_n$ introduced in Lemma~\ref{lem:LN}, we follow the notation of Katznelson and Ornstein and define
\begin{equation}\label{eq:KO-return-distortion}
 K_n^0\defeq\|\ln DF^{q_n}\|_\infty, \qquad n\ge1.
\end{equation}
The results from \cite{KO89D,KO89} needed below are collected in Appendix~\ref{app:KO}.

\begin{lemma}\label{lem:KO-dyadic}
Let $F\in\mathcal F_\alpha^2$, where $\alpha$ is an irrational number of constant type, and let $h$ satisfy~\eqref{eq:forward-conjugacy}. Then both $h$ and $h^{-1}$ are absolutely continuous. Moreover, there exists a nonnegative sequence $(\beta_n)_{n\ge1}\in\ell^2$ such that, for every $x\in\R$,
\begin{equation}\label{eq:KO-dyadic-ratio}
 \left|\frac{h(x+2^{-n})-h(x)}{h(x)-h(x-2^{-n})}-1\right|
 \le \beta_n.
\end{equation}
\end{lemma}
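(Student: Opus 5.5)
We will derive Lemma~\ref{lem:KO-dyadic} from the Katznelson--Ornstein results collected in Appendix~\ref{app:KO}, translated through the change of convention: the map called $h$ in \cite{KO89} is our $h^{-1}$. Absolute continuity of both $h$ and $h^{-1}$ is the main theorem of \cite{KO89} for constant type rotation numbers, and in Appendix~\ref{app:KO} it is packaged in Proposition~\ref{appC:KO-interval-ratios}. We will simply quote it, after noting that the statement there is symmetric under passing to the inverse. The substance of the lemma is the $\ell^2$ control \eqref{eq:KO-dyadic-ratio}.

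The plan for \eqref{eq:KO-dyadic-ratio} proceeds in three steps.
\begin{enumerate}
\item Fix $n$ and the adjacent intervals $I^+=[x,x+2^{-n}]$ and $I^-=[x-2^{-n},x]$. The ratio in question is $|h(I^+)|/|h(I^-)|$, the ratio of the $\mu$-measures of two adjacent equal-length intervals, where $\mu$ is the invariant measure of $F$ pulled back to the rotation side.
\item Choose the partition level $m=m(n)$ with $q_m\asymp 2^n$. This is possible because $q_{m+1}<c^{-1}q_m$, so consecutive denominators grow by a bounded factor.
\item The rotation points $x+k\alpha$ with $0\le k<q_{m+1}$ cut $I^\pm$ into boundedly many pieces, with a bound depending only on $c$, from the dynamical partition of level $m$. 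Each piece of $I^+$ can be matched with a piece of $I^-$ that is a translate by some $j\alpha$ with $|j|\le Cq_m$, up to boundedly many boundary pieces.
\end{enumerate}

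For two matched intervals $J$ and $J+j\alpha$, the ratio $|h(J+j\alpha)|/|h(J)|$ is the average of $DF^{j}$ over $h(J)$. Since $|j|\lesssim q_m$, we decompose $j$ via Ostrowski's expansion into a bounded number of returns $q_m,q_{m-1},\dots$, with bounded coefficients. Denjoy--Koksma-type bounds then control $\ln DF^{j}$ on the relevant short intervals by the return distortions $K_k^0$ and the local distortion sums. This is where Propositions~\ref{appC:KO-recursion} and~\ref{appC:KO-square-summability} enter: they assert that a sequence $\epsilon_m$ dominating $|\ln DF^{q_m}|$ on the level-$m$ partition intervals (equivalently, $|\ln DF^{q_m}-\text{const}|$ after normalization) is square summable. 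We will set $\beta_n\defeq C\sum_{|k-m(n)|\le C'}\epsilon_k$ plus the interval-ratio error from Proposition~\ref{appC:KO-interval-ratios}. Because each $k$ is used by only boundedly many $n$, we get $(\beta_n)\in\ell^2$.

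The main obstacle is the matching step. We must show that the pieces of $I^+$ and $I^-$ pair up with translations whose distortion is governed by the level-$m$ quantities alone, and that the unmatched boundary pieces carry relative $h$-measure that is also $\ell^2$-small. A priori bounds only give comparability of adjacent partition intervals, not ratios tending to $1$. To handle this we will first compare $I^+$ and $I^-$ with the full level-$(m+C)$ partition. The unmatched boundary pieces have Lebesgue length a factor $\lesssim q_m/q_{m+C}$ of $|I^\pm|$, which is exponentially small in $C$. By bounded geometry (Proposition~\ref{appC:KO-interval-ratios}) the same holds for their $h$-measure. We then choose $C$ depending on $n$ slowly enough that $\sum_{|k-m|\le C}\epsilon_k$ remains square summable. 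If this becomes delicate, the fallback is to take $\beta_n$ as given directly in the form stated in Proposition~\ref{appC:KO-interval-ratios} and only verify the dyadic reindexing.
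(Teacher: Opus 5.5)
Your fallback is essentially the paper's proof, but your primary route has a genuine gap. The step you call ``only verify the dyadic reindexing'' is in fact the entire argument.

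\textbf{Where the matching argument fails.} You control the unmatched boundary pieces only by bounded geometry. They therefore contribute an error of order $\theta^{C}$ to the ratio, with no small distortion factor in front. For that term to be square summable in $n$ you need $C=C(n)\to\infty$. But then $\sum_{|k-m(n)|\le C(n)}\epsilon_k$ is no longer a bounded-multiplicity reindexing of an $\ell^2$ sequence:
\begin{enumerate}
\item each $\epsilon_k$ is reused by about $C(k)$ indices $n$;
\item Cauchy--Schwarz then only gives $\sum_n\beta_n^2\lesssim\sum_k C(k)^2\epsilon_k^2$;
\item for every unbounded $C(\cdot)$ there are sparse square-summable sequences whose window sums are not in $\ell^2$.
\end{enumerate}
All you know about $K_k^0$ is $\sum_k(K_k^0)^2<\infty$, so no choice of $C(n)$ closes the argument. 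In addition, once the window grows, the Ostrowski decomposition no longer uses a bounded number of returns.

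The missing idea is that the boundary pieces must themselves be matched at the following levels, so that level $k$ enters with a geometric weight $\theta^{k-\ell(n)}$ rather than a flat one. That exponentially weighted tail is exactly what \eqref{eq:appC-KO-interval-ratio} already provides, so re-deriving it is unnecessary. Two smaller points: square summability of $K_k^0$ is Proposition~\ref{appC:KO-square-summability} by itself, and Proposition~\ref{appC:KO-recursion} plays no role here. Also, once you add ``the interval-ratio error from Proposition~\ref{appC:KO-interval-ratios}'' to your $\beta_n$, the matching becomes redundant, and the $\ell^2$ property of that error term is precisely what remains to be shown.

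\textbf{What the proof needs.} For $n\ge n_*$ set $\beta_n\defeq C\sum_{m\ge\ell(n)}\theta^{m-\ell(n)}K_m^0$, so that \eqref{eq:KO-dyadic-ratio} is just \eqref{eq:appC-KO-interval-ratio}. Cauchy--Schwarz with the geometric weights gives
\[
 \beta_n^2\le\frac{C^2}{1-\theta}\sum_{m\ge\ell(n)}\theta^{m-\ell(n)}(K_m^0)^2 .
\]
Each value of $\ell(n)$ is attained by at most $L_A$ indices $n\ge n_*$. Interchanging the sums therefore yields $\sum_{n\ge n_*}\beta_n^2\le C^2L_A(1-\theta)^{-2}\sum_m(K_m^0)^2$, which is finite by Proposition~\ref{appC:KO-square-summability}.

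For the finitely many $n<n_*$, take $\beta_n$ to be the supremum over $x\in[0,1]$ of the left side of \eqref{eq:KO-dyadic-ratio}. This is finite because that expression is $1$-periodic in $x$ and $h$ is continuous and strictly increasing.

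Absolute continuity of both $h$ and $h^{-1}$ is stated directly in Proposition~\ref{appC:KO-interval-ratios}, so no symmetry argument is needed.
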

\begin{proof}
The absolute continuity of $h$ and $h^{-1}$ and the derivative regularity $h',(h^{-1})'\in L^2(\T)$ follow from Proposition~\ref{appC:KO-interval-ratios}.
Proposition~\ref{appC:KO-square-summability} gives $\sum_{m\ge1}(K_m^0)^2<\infty$. Let $n_*,C,\theta$ and the index sequence $\ell(n)$ be as in Proposition~\ref{appC:KO-interval-ratios}. For $n\ge n_*$, define
\[
 \beta_n\defeq C\sum_{m\ge\ell(n)}\theta^{m-\ell(n)}K_m^0.
\]
Then~\eqref{eq:KO-dyadic-ratio} follows directly from Proposition~\ref{appC:KO-interval-ratios}. For $1\le n<n_*$, let $\beta_n$ be the supremum over $x\in[0,1]$ of the left-hand side of~\eqref{eq:KO-dyadic-ratio}. Each $\beta_n$ is finite by the strict monotonicity and continuity of $h$.

For $n\ge n_*$, by the Cauchy--Schwarz inequality, we have
\[
 \beta_n^2\le C^2\left(\sum_{m\ge\ell(n)}\theta^{m-\ell(n)}\right)\left(\sum_{m\ge\ell(n)}\theta^{m-\ell(n)}(K_m^0)^2\right)
 =\frac{C^2}{1-\theta}\sum_{m\ge\ell(n)}\theta^{m-\ell(n)}(K_m^0)^2.
\]
By Proposition~\ref{appC:KO-interval-ratios}, for every $k\in\N$ there are at most $L_A$ indices $n\ge n_*$ such that $\ell(n)=k$. Hence
\[
 \sum_{n\ge n_*}\beta_n^2
 \le\frac{C^2}{1-\theta}\sum_{m\ge1}(K_m^0)^2
       \sum_{\substack{n\ge n_*\\\ell(n)\le m}}\theta^{m-\ell(n)}
 \le\frac{C^2L_A}{(1-\theta)^2}\sum_{m\ge1}(K_m^0)^2<\infty.
\]
Together with $\sum_{1\le n<n_*}\beta_n^2 < \infty$, this yields $(\beta_n)_{n\ge1}\in\ell^2$.\qedhere
\end{proof}

\begin{proposition}\label{prop:conjugacy-regularity}
Under the assumptions of Lemma~\ref{lem:KO-dyadic}, for every real number $r$ and every $1<p<\infty$, we have
\begin{align}
 &h'>0\quad\text{almost everywhere},\qquad
   \int_\T(h')^r\,dx<\infty,\label{eq:all-derivative-moments}\\
 &h'\in B^0_{p,2}(\T),\qquad
   h-\id,\ h^{-1}-\id\in W^{1,p}(\T).
   \label{eq:conjugacy-positive-regularity}
\end{align}
Here $h'\in B^0_{p,2}(\T)$ is equivalent to $h-\id\in B^1_{p,2}(\T)$.
\end{proposition}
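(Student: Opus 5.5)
The plan is to derive everything from the dyadic ratio bound \eqref{eq:KO-dyadic-ratio} of Lemma~\ref{lem:KO-dyadic}, through a martingale description of $h'$.

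\textbf{Martingale setup.} Let $\mathcal G_n$ be the $\sigma$-algebra generated by the dyadic intervals $I$ of length $2^{-n}$, and put $X_n \defeq \mathbb E[h'\mid\mathcal G_n]$. On each such $I$, $X_n$ equals $2^n|h(I)|$. Since $h$ is absolutely continuous, $X_n\to h'$ almost everywhere and in $L^1$.

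Let $I=I_0\cup I_1$ be the two children of $I$. Apply \eqref{eq:KO-dyadic-ratio} at the midpoint of $I$ with step $2^{-(n+1)}$. The ratio $|h(I_1)|/|h(I_0)|$ is then within $\beta_{n+1}$ of $1$. Consequently
\[
X_{n+1}=X_n(1+\epsilon_{n+1}),\qquad |\epsilon_{n+1}|\le C\beta_{n+1},
\]
where $\epsilon_{n+1}$ has opposite signs on the two children. Hence $\mathbb E[\epsilon_{n+1}\mid\mathcal G_n]=0$.

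For $n\ge n_0$ with $\beta_n\le 1/2$, set $Y_n\defeq\ln X_n-\ln X_{n_0}$. Then
\[
Y_n=\sum_{k=n_0+1}^n \ln(1+\epsilon_k)=\sum_k\epsilon_k+O\Big(\sum_k\beta_k^2\Big).
\]
The sum $M_n\defeq\sum\epsilon_k$ is a martingale whose increments are bounded by $C\beta_k$, and $(\beta_k)\in\ell^2$. The first $n_0$ levels contribute factors bounded above and below, because $h$ is a homeomorphism; so $\ln X_{n_0}$ is bounded.

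\textbf{Moments and positivity.} Azuma--Hoeffding gives $\mathbb E e^{\lambda M_n}\le \exp(\lambda^2 C\sum\beta_k^2/2)$ uniformly in $n$, for every real $\lambda$. Therefore $\sup_n\mathbb E X_n^r<\infty$ for all $r\in\R$. The martingale $M_n$ is bounded in $L^2$, so it converges almost everywhere to a finite limit, and $\ln X_n$ converges almost everywhere to a finite limit. This gives $h'>0$ almost everywhere.

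Fatou's lemma then yields $\int (h')^r<\infty$ for all real $r$, which is \eqref{eq:all-derivative-moments}. In particular $h'\in L^p$ for all $p<\infty$, so $h-\id\in W^{1,p}$.

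For the inverse, $(h^{-1})'=1/h'\circ h^{-1}$. By the change of variables $y=h(x)$,
\[
\int ((h^{-1})')^p\,dy=\int (h')^{1-p}\,dx<\infty .
\]
Here $h^{-1}$ is absolutely continuous by Lemma~\ref{lem:KO-dyadic}, and $h'>0$ almost everywhere justifies the substitution. So $h^{-1}-\id\in W^{1,p}$.

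\textbf{Besov regularity: the main step.} The plan is to prove
\[
\big\|(X_{n+1}-X_n)_n\big\|_{\ell^2(L^p)}<\infty,
\]
and then to identify this with the $B^0_{p,2}$ norm. The bound itself is direct: $|X_{n+1}-X_n|\le C\beta_{n+1}X_n$, so
\[
\|X_{n+1}-X_n\|_p\le C\beta_{n+1}\sup_n\|X_n\|_p,
\]
which is square summable.

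The harder part is the transfer to the smooth blocks $\mathsf P_j h'$. Write $h'=X_0+\sum_n d_n$ with $d_n\defeq X_{n+1}-X_n$; each $d_n$ is a Haar-type function at scale $2^{-n}$. Then bound
\[
\|\mathsf P_j d_n\|_p\lesssim 2^{-|j-n|/2}\,\|d_n\|_p .
\]
For $j<n$, use the cancellation of $d_n$: it has mean zero on each parent interval. For $j>n$, use that $\mathsf P_j$ kills polynomials in the interior and only sees the jumps of $d_n$, an $O(2^{n-j})$ fraction of the measure, with $L^p$ boundedness giving the power $2^{-(j-n)/p}$. Schur's test for the convolution $\ell^2\to\ell^2$ then gives $(\|\mathsf P_jh'\|_p)_j\in\ell^2$, hence $h'\in B^0_{p,2}$.

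If the off-diagonal decay is awkward to make uniform in $p$, the fallback is this. Since $\beta_n\to0$, the increments $d_n$ are $L^q$ bounded for every $q$. Apply Burkholder's square-function inequality to obtain $h'\in F^0_{p,2}$, and embed $F^0_{p,2}\hookrightarrow B^0_{p,2}$ for $p\le 2$. For $p>2$, use instead the $L^p$ estimate of each $d_n$ directly, together with the almost-orthogonality above.

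The equivalence $h'\in B^0_{p,2}\iff h-\id\in B^1_{p,2}$ is the lifting property \eqref{appB:lifting}.
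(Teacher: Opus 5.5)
Your proof is correct and follows the paper's route. Both arguments use the dyadic averages $X_n$ of $h'$ with multiplicative increments controlled by \eqref{eq:KO-dyadic-ratio}, then uniform bounds on $\int_\T X_n^r$ together with Fatou, the change of variables $y=h(x)$ for $h^{-1}$, and $\|X_{n+1}-X_n\|_p\lesssim\beta_{n+1}$ fed into the Haar-type criterion of Proposition~\ref{appB:dyadic-step}. The differences are minor: you get the moments from Azuma--Hoeffding applied to $\ln X_n$ rather than the paper's one-step inequality $\tfrac12[(1+a)^r+(1-a)^r]\le 1+C_ra^2$, and you sketch a proof of the Haar-to-Besov transfer instead of citing it. In that sketch the decay for $j>n$ is $2^{-(j-n)/p}$, not $2^{-(j-n)/2}$, when $p>2$; this still suffices for Schur's test, so the Burkholder fallback is unnecessary.
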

\begin{proof}
For $n\ge0$ and $k\in\Z$, write $I_{n,k}\defeq [k2^{-n},(k+1)2^{-n})$. On each $I_{n,k}$, define
\[
w_n(x)\defeq 2^n\big(h((k+1)2^{-n})-h(k2^{-n})\big),\qquad x\in I_{n,k}.
\]
Since $h(x+1)=h(x)+1$, the function $w_n$ is 1-periodic and $w_0\equiv 1$. Since $h$ is strictly increasing, $w_n$ is positive. For each parent interval $I=I_{n-1,k}$, denote its left and right children of equal length by $I^-$ and $I^+$, respectively. By construction,
\[
\frac{w_n|_{I^-}+w_n|_{I^+}}{2}=w_{n-1}|_I.
\]
Thus we may write $w_n|_{I^+}=w_{n-1}|_I(1+a)$ and $w_n|_{I^-}=w_{n-1}|_I(1-a)$, where the parameter is real and $|a|<1$. Applying~\eqref{eq:KO-dyadic-ratio} at the common endpoint $(2k+1)2^{-n}$ of the two children gives $|a|\le\frac{\beta_n}{2-\beta_n}$ whenever $\beta_n<1$.

Since $(\beta_n)_{n\ge1}\in\ell^2$, there exists $n_0\ge1$ such that $\beta_n<1$ and $\frac{\beta_n}{2-\beta_n}\le 1/2$ for every $n\ge n_0$. Define
\[
\zeta_n\defeq
\begin{cases}
\max_I |a|, & 1\le n<n_0,\\
\dfrac{\beta_n}{2-\beta_n}, & n\ge n_0,
\end{cases}.
\]
For $n<n_0$, the maximum is taken over all parent intervals at level $n-1$. Since $h$ is strictly increasing, $\zeta_n<1$. It follows that $\sum_{n\ge1}\zeta_n^2<\infty$ and that $|a|\le\zeta_n$ for every level and every parent interval.

For any fixed $r \in \R$, Taylor expansion gives, on $[-1/2,1/2]$,
\[
 \frac{(1+a)^r+(1-a)^r}{2}\le 1+C_r a^2.
\]
Integrating over each parent interval and summing gives $\int_\T w_n^r\,dx \le (1+C_r\zeta_n^2)\int_\T w_{n-1}^r\,dx$. Iteration yields
\begin{equation}\label{eq:dyadic-moment-bound}
 \sup_{n\ge0}\int_\T w_n^r\,dx
 \le C_{F,r}\prod_{n\ge n_0}(1+C_r\zeta_n^2)<\infty.
\end{equation}
Since $h$ is absolutely continuous, $w_n$ is the dyadic conditional expectation of $h'$. By the Lebesgue differentiation theorem, $w_n\to h'$ almost everywhere. Fatou's lemma gives, for every $r$,
\[
 \int_\T(h')^r\,dx \le \liminf_{n\to\infty}\int_\T w_n^r\,dx < \infty.
\]
Since $h$ preserves orientation, $h'\ge0$ almost everywhere. Combining this with $\int_\T (h')^{-1}\,dx < \infty$ for $r=-1$ gives $h'>0$ almost everywhere, proving~\eqref{eq:all-derivative-moments}. Setting $y=h(x)$, we obtain, for every $1<p<\infty$,
\begin{equation}\label{eq:inverse-moments}
 \int_\T((h^{-1})'(y))^p\,dy
 =\int_\T(h'(x))^{1-p}\,dx<\infty.
\end{equation}
This proves that $h-\id, h^{-1}-\id\in W^{1,p}(\T)$.

For $n\ge1$, let $d_n\defeq w_n-w_{n-1}$. By $|d_n|\le \zeta_n w_{n-1}$ and~\eqref{eq:dyadic-moment-bound},
\begin{equation}\label{eq:dyadic-increment-Lp}
 \|d_n\|_p\le \zeta_n\|w_{n-1}\|_p\le C_{F,p}\zeta_n.
\end{equation}
Since $d_n$ is constant on each dyadic interval of length $2^{-n}$ and has mean zero on each parent interval, the fact that $h'\in L^p(\T)$ and Proposition~\ref{appB:dyadic-step} imply
\[
 \|h'\|_{B^0_{p,2}}
 \le C_p\left(\|h'\|_p+
        \Bigl(\sum_{n\ge1}\|d_n\|_p^2\Bigr)^{1/2}\right)<\infty.
\]
Equation~\eqref{appB:lift-convention} now gives $h-\id\in B^1_{p,2}(\T)$.\qedhere
\end{proof}

\subsection{The two perturbations at each stage}
\label{subsec:stage-perturbations}

Lemma~\ref{lem:outer-Besov} provides the estimates for the first perturbation. The second perturbation is introduced to increase the local norms of the sharp Fourier blocks while retaining control in weaker topologies. Define the real-valued trigonometric polynomial of mean zero
\[
 \mathfrak p_N\defeq A_\alpha U_N,\qquad \deg\mathfrak p_N\le N.
\]
By~\eqref{eq:difference-estimates} and Lemma~\ref{lem:LN}, whenever $N\ge\max\{3,4c^{-2}\}$, we have
\begin{equation}\label{eq:insertion-basic}
 \|\mathfrak p_N\|_\infty\le\frac2{L_N}\le C_cJ_N^{-1/2}.
\end{equation}
Since $\mathfrak p_N$ has at most $1+\lfloor\log_2N\rfloor\le C J_N$ nonzero dyadic frequency blocks, for $1<q\le2$ we have
\begin{equation}\label{eq:insertion-besov}
 \mathcal B_q(\mathfrak p_N)
 \le(CJ_N)^{1/q-1/2}\|\mathfrak p_N\|_2
 \le C_{c,q}J_N^{1/q-1}\longrightarrow0.
\end{equation}
For $2<q\le\infty$, we have
\begin{equation}\label{eq:insertion-besov-high}
 \mathcal B_q(\mathfrak p_N)\le\|\mathfrak p_N\|_2\le\|\mathfrak p_N\|_\infty
 \le C_cJ_N^{-1/2}\longrightarrow0.
\end{equation}
The Bernstein estimate $\|\mathfrak p_N'\|_\infty\le 2\pi N\|\mathfrak p_N\|_\infty$ gives
\[
 \omega_{\mathfrak p_N}(t)\le\min\{2\|\mathfrak p_N\|_\infty, t\|\mathfrak p_N'\|_\infty\}
 \le C_cJ_N^{-1/2}\min\{1,Nt\}.
\]
Integrating separately over $[0,N^{-1}]$ and $[N^{-1},e^{-1}]$, we obtain
\begin{equation}\label{eq:insertion-dini}
 \mathcal D_\gamma(\mathfrak p_N)\le C_{c,\gamma}
 \begin{cases}
 J_N^{1/2-\gamma},&1/2<\gamma<1,\\
 J_N^{-1/2}\ln(eJ_N),&\gamma=1,
 \end{cases}
 \longrightarrow0.
\end{equation}

Since $\mathfrak p_N$ has mean zero and $A_\alpha^{-1}\mathfrak p_N=U_N$, Lemma~\ref{lem:besov-multiplier} gives
\begin{equation}\label{eq:insertion-endpoint-lower}
\mathcal B_1(\mathfrak p_N)\ge C_c^{-1}\qquad(N\ge3).
\end{equation}
For a $C^2$ lift $G$, define
\begin{equation}\label{eq:finite-endpoint-functional}
\mathfrak E_M(G)\defeq\sum_{s=0}^M\|\Pi_sG''\|_2,
\end{equation}
which satisfies the stability estimate
\begin{equation}\label{eq:endpoint-stability}
|\mathfrak E_M(G)-\mathfrak E_M(\widetilde G)|
\le(M+1)d_2(G,\widetilde G).
\end{equation}

Let $G$ be a real-analytic orientation-preserving lift with $\rho(G)=\alpha$. Given any $L,\epsilon_C,\epsilon_B,\epsilon_D>0$, choose $\lambda>0$ such that
\begin{equation}\label{eq:lambda-choice}
\lambda C_c^{-1}>\mathcal B_1(G'')+4L.
\end{equation}
For any function $f\in C^0(\T)$ of mean zero, define
\[
(\mathcal I f)(x)\defeq \int_0^x f(t)\,dt - \int_0^1\left(\int_0^s f(t)\,dt\right)ds.
\]
Clearly, $(\mathcal I f)'=f$, $\int_\T \mathcal I f\,dx=0$, and $\|\mathcal I f\|_\infty\le 2\|f\|_\infty$. Set
\[
z_N\defeq \lambda \mathcal I^2(\mathfrak p_N).
\]
Then $z_N$ is again a real trigonometric polynomial of mean zero and degree at most $N$, and it satisfies
\begin{equation}\label{eq:primitive-smallness}
z_N''=\lambda \mathfrak p_N,\qquad
\|z_N'\|_\infty\le2\lambda\|\mathfrak p_N\|_\infty,\qquad
\|z_N\|_{C^2}\le 4\lambda\|\mathfrak p_N\|_\infty\longrightarrow0.
\end{equation}
With $\lambda$ fixed, equations~\eqref{eq:insertion-besov}, \eqref{eq:insertion-dini}, and~\eqref{eq:primitive-smallness} allow us to choose a sufficiently large truncation degree $N$ such that, for any fixed $q>1$ and $\gamma\in(1/2,1]$,
\begin{equation}\label{eq:insertion-budgets}
8\lambda\|\mathfrak p_N\|_\infty<\epsilon_C,\qquad
\lambda\mathcal B_q(\mathfrak p_N)<\epsilon_B,\qquad
\lambda\mathcal D_\gamma(\mathfrak p_N)<\epsilon_D.
\end{equation}

Further require $N$ to be large enough that $2\lambda\|\mathfrak p_N\|_\infty<\tfrac12\min G'$. Then, for every constant $\eta \in \R$, the lift $G+z_N+\eta$ satisfies $(G+z_N+\eta)'\ge\tfrac12\min G'>0$. The pointwise bounds
\[
G-\|z_N\|_\infty\le G+z_N\le G+\|z_N\|_\infty
\]
and the monotonicity and continuity in Proposition~\ref{lem:rotation-basic} imply that there exists a constant $\tau_N$ with $|\tau_N|\le\|z_N\|_\infty$ such that
\begin{equation}\label{eq:rotation-correction}
\widetilde G_N\defeq G+z_N+\tau_N
\end{equation}
has the same rotation number $\rho(\widetilde G_N)=\alpha$. By~\eqref{eq:primitive-smallness} and~\eqref{eq:insertion-budgets},
\[
d_2(\widetilde G_N,G)
\le\|z_N\|_{C^2}+|\tau_N|
\le2\|z_N\|_{C^2}
\le8\lambda\|\mathfrak p_N\|_\infty<\epsilon_C.
\]
Let $M(N)\defeq\lfloor\log_2N\rfloor$. All frequencies of $\mathfrak p_N$ lie in blocks with $s\le M(N)$, so $\sum_{s=0}^{M(N)}\|\Pi_s\mathfrak p_N\|_2=\mathcal B_1(\mathfrak p_N)$. Since $\widetilde G_N''=G''+\lambda\mathfrak p_N$, applying the reverse triangle inequality on each of these finitely many blocks gives
\begin{align*}
\mathfrak E_{M(N)}(\widetilde G_N)
&=\sum_{s=0}^{M(N)}\|\Pi_s(G''+\lambda\mathfrak p_N)\|_2\\
&\ge\lambda\mathcal B_1(\mathfrak p_N)-\sum_{s=0}^{M(N)}\|\Pi_sG''\|_2
\ge\lambda\mathcal B_1(\mathfrak p_N)-\mathcal B_1(G'').
\end{align*}
Combining~\eqref{eq:insertion-endpoint-lower} and~\eqref{eq:lambda-choice}, we obtain
\begin{equation}\label{eq:finite-endpoint-witness}
\mathfrak E_{M(N)}(\widetilde G_N)
\ge \lambda C_c^{-1}-\mathcal B_1(G'')
>4L.
\end{equation}
Moreover, by choosing a smaller $\epsilon_C>0$ in~\eqref{eq:insertion-budgets}, the construction ensures that $\widetilde G_N$ remains in any prescribed $C^2$ open neighborhood of $G$.

\subsection{The recursive construction}
\label{subsec:recursion}

\begin{proof}[Proof of Theorem~\ref{thm:main}]
Fix $q_*>1$, $\gamma_*\in(1/2,1]$, and $\varepsilon>0$. Take $\theta_*\defeq\min\{1/4,\gamma_*-1/2\}$, and set
\begin{equation}\label{eq:exponent-budgets}
\tau\defeq\min\{1,\varepsilon\},\qquad
\overline q\defeq\min\{q_*,2\},\qquad
b_n\defeq\varepsilon2^{-n-4}\qquad(n\ge1),
\end{equation}
and
\begin{equation}\label{eq:gamma-sequence}
\mathfrak q_n\defeq1+\frac{\overline q-1}{n+1},\qquad
\gamma_n\defeq\frac12+\frac{\theta_*}{n}\in(1/2,\gamma_*]\qquad(n\ge1).
\end{equation}

\medskip\noindent\textbf{Inductive construction.}
Start with $F_0\defeq R_\alpha$, $H_0\defeq\id$, and $r_0\defeq\tau$. For $n\ge1$, suppose inductively that positive numbers $r_0,\ldots,r_{n-1}>0$ and a real-analytic orientation-preserving lift $H_{n-1}$ with $H_{n-1}(0)=0$ have been chosen such that
\begin{equation}\label{eq:analytic-representation}
F_{n-1}=H_{n-1}R_\alpha H_{n-1}^{-1}.
\end{equation}
Set
\begin{equation}\label{eq:stage-budgets}
\delta_n\defeq2^{-n-8}\min_{0\le j<n} r_j.
\end{equation}

Write
\begin{equation}\label{eq:outer-constants}
\kappa_{n-1}\defeq\frac{\max H_{n-1}'}{\min H_{n-1}'},\qquad
T_n\defeq\frac{16}{15}\ln(4\kappa_{n-1}n)+1,
\end{equation}
and set $C_n\defeq C_{H_{n-1},T_n}$. Introduce the threshold
\begin{equation}\label{eq:degree-choice}
\begin{split}
S_n\defeq1+\max\biggl\{&J_0(T_n),\ \ln(4c^{-2}),\
(2C_n/\delta_n)^2,\
\frac{\beta_c^2T_n^2e^{T_n}}{2\Gamma_c^2},\\
&(12C_n/b_n)^{\mathfrak q_n/(\mathfrak q_n-1)},\ \ln(12C_n/b_n),\
\left(\frac{6C_n}{(1-\gamma_n)b_n}\right)^{1/(\gamma_n-1/2)}
\biggr\},
\end{split}
\end{equation}
Take $N_n\defeq 2^{\lfloor S_n/\ln2\rfloor+1}$. Since $S_n<\ln N_n\le S_n+\ln2$, we have $\mathcal Z_{N_n,T_n}\le2$ and
\begin{equation}\label{eq:degree-rule}
\begin{split}
C_nJ_{N_n}^{-1/2}&<\frac{\delta_n}{2},\\
3C_n\bigl(J_{N_n}^{1/\mathfrak q_n-1}+N_n^{-1}\bigr)&<\frac{b_n}{2},\\
\frac{3C_n}{1-\gamma_n}J_{N_n}^{1/2-\gamma_n}&<\frac{b_n}{2}.
\end{split}
\end{equation}
Define
\begin{equation}\label{eq:large-step}
G_n\defeq H_{n-1}g_{N_n,T_n}H_{n-1}^{-1},\qquad \ell_n\defeq\ell_{N_n}.
\end{equation}
By Lemma~\ref{lem:outer-Besov} and~\eqref{eq:degree-rule},
\begin{equation}\label{eq:large-step-budgets}
d_2(G_n,F_{n-1})<\frac{\delta_n}{2},\quad
\mathcal B_{\mathfrak q_n}(G_n''-F_{n-1}'')<\frac{b_n}{2},\quad
\mathcal D_{\gamma_n}(G_n''-F_{n-1}'')<\frac{b_n}{2}.
\end{equation}
Moreover, by~\eqref{eq:seed-growth} and~\eqref{eq:outer-constants},
\begin{equation}\label{eq:new-iterate-witness}
\|DG_n^{\ell_n}\|_\infty
\ge\kappa_{n-1}^{-1}e^{15T_n/16}>4n.
\end{equation}

Set $K_n\defeq H_{n-1}\circ h_{N_n,T_n}$. Then $G_n=K_nR_\alpha K_n^{-1}$ and $K_n(0)=0$. Combining $\min H_{n-1}'\ge\kappa_{n-1}^{-1}$, $h_{N_n,T_n}'(0)\ge\tfrac12e^{T_n}$, and~\eqref{eq:outer-constants}, we obtain
\[
K_n'(0)\ge\frac{e^{T_n}}{2\kappa_{n-1}}>4n.
\]
Thus $K_n(t)>4nt$ in a sufficiently small right neighborhood of the origin. Since the positive orbit $\{k\alpha\}_{k\ge1}$ accumulates at the origin, there exists an integer $k_n\ge1$ such that
\begin{equation}\label{eq:new-secant-witness}
t_n\defeq k_n\alpha-\lfloor k_n\alpha\rfloor\in(0,2^{-n})
\end{equation}
and $K_n(t_n)>4nt_n$. Introduce the functional
\begin{equation}\label{eq:secant-functional}
\mathfrak S_n(u)\defeq u^{k_n}(0)-\lfloor k_n\alpha\rfloor,
\end{equation}
so that $\mathfrak S_n(G_n)=K_n(t_n)>4nt_n$. Moreover, Lemma~\ref{lem:iteration} gives
\begin{equation}\label{eq:secant-stability}
|\mathfrak S_n(u)-\mathfrak S_n(v)|\le(2^{k_n}-1)d_2(u,v).
\end{equation}

Apply the perturbation construction in Subsection~\ref{subsec:stage-perturbations} with $G=G_n$ and $L=n$. By~\eqref{eq:secant-stability} and Lemma~\ref{lem:iteration}, to ensure that
\begin{equation}\label{eq:witness-preservation}
\|DF_n^{\ell_n}\|_\infty>3n,\qquad \mathfrak S_n(F_n)>3nt_n
\end{equation}
it suffices to require the $C^2$ perturbation to satisfy
\begin{equation}\label{eq:perturb-tolerance}
d_2(F_n,G_n)<\min\left\{\frac{\delta_n}{2},\,\frac{\|DG_n^{\ell_n}\|_\infty-3n}{C_{\ell_n}^{\mathrm{it}}},\,\frac{\mathfrak S_n(G_n)-3nt_n}{2^{k_n}-1}\right\}.
\end{equation}
Accordingly, choose $\lambda_n$ as in~\eqref{eq:lambda-choice} and then choose $\widehat N_n$ sufficiently large. The resulting $F_n$ satisfies
\begin{equation}\label{eq:insertion-stage-budgets}
d_2(F_n,G_n)<\frac{\delta_n}{2},\quad
\mathcal B_{\mathfrak q_n}(F_n''-G_n'')<\frac{b_n}{2},\quad
\mathcal D_{\gamma_n}(F_n''-G_n'')<\frac{b_n}{2},
\end{equation}
as well as~\eqref{eq:witness-preservation}.

Set $M_n\defeq\lfloor\log_2\widehat N_n\rfloor$. By~\eqref{eq:finite-endpoint-witness} and~\eqref{eq:insertion-stage-budgets},
\begin{equation}\label{eq:stage-witnesses}
\mathfrak E_{M_n}(F_n)>3n,\qquad
\|DF_n^{\ell_n}\|_\infty>3n,\qquad
\mathfrak S_n(F_n)>3nt_n.
\end{equation}
Since $d_2(F_n,R_\alpha)\le\sum_{j=1}^n\delta_j\le1/256$, we use~\eqref{eq:iteration-stability}, \eqref{eq:endpoint-stability}, and~\eqref{eq:secant-stability} to choose a perturbation radius
\begin{equation}\label{eq:witness-radius}
0<r_n<\min\left\{\frac14,\,
\frac{\|DF_n^{\ell_n}\|_\infty-2n}{C_{\ell_n}^{\mathrm{it}}},\,
\frac{\mathfrak E_{M_n}(F_n)-2n}{M_n+1},\,
\frac{\mathfrak S_n(F_n)-2nt_n}{2^{k_n}-1}\right\}.
\end{equation}
For every orientation-preserving lift $u$ satisfying $d_2(u,F_n)<r_n$, we have $d_2(u,R_\alpha)<1/256+1/4<1/2$. The derivative bounds in Lemma~\ref{lem:iteration} apply to $u$, and
\begin{equation}\label{eq:open-ball-preservation}
\mathfrak E_{M_n}(u)>2n,\qquad
\|Du^{\ell_n}\|_\infty>2n,\qquad
\mathfrak S_n(u)>2nt_n.
\end{equation}

Since $F_n$ is real-analytic and orientation-preserving with $\rho(F_n)=\alpha$, the Herman--Yoccoz global analytic linearization theorem \cite[Corollary, p.~335]{Yoc84} gives a real-analytic orientation-preserving lift $H_n$ satisfying $H_n(0)=0$ and
\begin{equation}\label{eq:analytic-representation-next}
F_n=H_nR_\alpha H_n^{-1}.
\end{equation}
This completes the $n$th step of the inductive construction.

\medskip\noindent\textbf{The limit map.}
Combining~\eqref{eq:large-step-budgets} and~\eqref{eq:insertion-stage-budgets}, we obtain, for every $n\ge1$,
\begin{equation}\label{eq:combined-stage-budgets}
d_2(F_n,F_{n-1})<\delta_n,\quad
\mathcal B_{\mathfrak q_n}(F_n''-F_{n-1}'')<b_n,\quad
\mathcal D_{\gamma_n}(F_n''-F_{n-1}'')<b_n.
\end{equation}
Since $\sum_{n\ge1}\delta_n\le1/256<\infty$, the sequence $\{F_n\}$ converges in the $C^2$ topology to an orientation-preserving lift $F$ with $\min F'>0$. By continuity of the rotation number in the $C^0$ topology, $\rho(F)=\lim_{n\to\infty}\rho(F_n)=\alpha$. Moreover, by~\eqref{eq:stage-budgets}, we have $\delta_j\le2^{-j-8}r_n$ for every $j\ge n+1$, and hence
\begin{equation}\label{eq:tail}
d_2(F,F_n)\le\sum_{j=n+1}^\infty \delta_j\le r_n\sum_{j=n+1}^\infty 2^{-j-8}=2^{-n-8}r_n<r_n.
\end{equation}
Together with~\eqref{eq:open-ball-preservation}, this gives
\begin{equation}\label{eq:limit-witnesses}
\|DF^{\ell_n}\|_\infty>2n,\qquad
\mathcal B_1(F'')\ge \mathfrak E_{M_n}(F)>2n,\qquad
\mathfrak S_n(F)>2nt_n.
\end{equation}
Letting $n\to\infty$, we conclude that $\{\|DF^{\ell_n}\|_\infty\}$ is unbounded and $F''\notin B^0_{2,1}(\T)$.

Since $\mathfrak q_n\le q_*$ and $\gamma_n\le\gamma_*$ for all $n\ge1$, \eqref{eq:combined-stage-budgets} gives
\begin{equation}\label{eq:limit-smallness}
d_2(F,R_\alpha)+\max\{\mathcal B_{q_*}(F''),\,\mathcal D_{\gamma_*}(F'')\}
\le\sum_{n=1}^\infty(\delta_n+b_n)
\le\frac{\tau}{256}+\frac{\varepsilon}{16}<\varepsilon.
\end{equation}
Let $q>1$ be arbitrary. Since $\mathfrak q_n\to1$, choose $n_q\ge1$ such that $\mathfrak q_n\le q$ for every $n\ge n_q$. As $\sum_{n\ge1}b_n<\infty$, the series $\sum_{n\ge n_q}(F_n''-F_{n-1}'')$ converges absolutely in $B^0_{2,q}(\T)$. Together with the real analyticity of $F_{n_q-1}''$, this gives
\begin{equation}\label{eq:limit-all-Besov}
F''\in\bigcap_{q>1}B^0_{2,q}(\T).
\end{equation}
Similarly, let $\gamma\in(1/2,1]$ be arbitrary. For sufficiently large $n$, we have $\gamma_n\le\gamma$. The inequality $\mathcal D_\gamma\le\mathcal D_{\gamma_n}$ and Fatou's lemma yield
\begin{equation}\label{eq:limit-all-Dini}
\mathcal D_\gamma(F'')<\infty\qquad(1/2<\gamma\le1).
\end{equation}

\medskip\noindent\textbf{The conjugacy.}
By Denjoy's theorem, there exists a unique orientation-preserving homeomorphism $h$ satisfying $F\circ h=h\circ R_\alpha$ and $h(0)=0$. Since $F^{k_n}(0)=h(k_n\alpha)=h(t_n)+\lfloor k_n\alpha\rfloor$, equations~\eqref{eq:secant-functional} and~\eqref{eq:limit-witnesses} give
\begin{equation}\label{eq:direct-non-Lipschitz}
\frac{h(t_n)-h(0)}{t_n}=\frac{\mathfrak S_n(F)}{t_n}>2n\qquad(0<t_n<2^{-n}).
\end{equation}
Since $t_n\to 0$, the right difference quotients at the origin diverge, so $h\notin\mathrm{Lip}(\T)$. More generally, $F$ and $R_\alpha$ are not $C^1$ conjugate: if there were $\widetilde h\in\mathcal D^1(\T)$ such that $F=\widetilde hR_\alpha\widetilde h^{-1}$, then for all $k\ge1$ we would have
\begin{equation}\label{eq:c1-obstruction}
\|DF^k\|_\infty\le\frac{\max\widetilde h'}{\min\widetilde h'}<\infty,
\end{equation}
contradicting the unboundedness of $\{\|DF^{\ell_n}\|_\infty\}$ in~\eqref{eq:limit-witnesses}.

On the other hand, applying Proposition~\ref{prop:conjugacy-regularity} to $F$ shows that $h$ satisfies $h'>0$ almost everywhere and
\begin{equation}\label{eq:conjugacy-regularity-spaces}
h'\in\bigcap_{1<p<\infty}B^0_{p,2}(\T),\qquad
h-\id,\ h^{-1}-\id\in\bigcap_{1<p<\infty}W^{1,p}(\T).
\end{equation}
In particular, $h$ is absolutely continuous, and~\eqref{eq:direct-non-Lipschitz} gives
\begin{equation}\label{eq:ess-sup-blowup}
\frac{1}{t_n}\int_0^{t_n}h'(x)\,dx=\frac{h(t_n)}{t_n}>2n.
\end{equation}
Thus $\operatorname*{ess\,sup}h'=\infty$, that is, $h'\notin L^\infty(\T)$.
\end{proof}

\begin{remark}\label{rem:explicit}
The construction proceeds by a step-by-step recursion; the parameter choices at a single stage are shown in Figure~\ref{fig:recursion}. The perturbation amplitudes are fixed before the truncation degree $\widehat N_n$ is chosen. For each fixed $q>1$ and $1/2<\gamma\le1$, the corresponding Besov and weak Dini tail estimates hold from some stage onward.

Once the conjugacy $H_{n-1}$ and its derivative distortion bound are given, all kernel estimates and the rules~\eqref{eq:witness-rule} and~\eqref{eq:degree-choice} are fully quantitative. The representation $F_n=H_nR_\alpha H_n^{-1}$ relies on the Herman--Yoccoz global analytic linearization theorem \cite[Corollary, p.~335]{Yoc84}. To turn this procedure into a fully quantitative effective construction, the key requirement is an effective a priori bound on the derivative of the conjugacy $H_n$. Given this control, the parameters $S_{n+1}, N_{n+1}, \delta_{n+1}, r_{n+1}$ at the next stage are determined successively by explicit algebraic inequalities.
\end{remark}

\par\medskip
\noindent\begin{minipage}{\linewidth}
\refstepcounter{figure}\label{fig:recursion}
\hypertarget{target:recursion}{}
\centering
\setlength{\unitlength}{1mm}
\footnotesize
\scalebox{0.90}{%
\begin{picture}(150,88)
\linethickness{0.4pt}
\put(0,77){\framebox(68,10){\shortstack[c]{
  Rotation number and its bound $(\alpha,c)$\\
  $L_N\ge\Gamma_c\sqrt{\ln N}$}}}
\put(72,77){\framebox(68,10){\shortstack[c]{
  Current map $F_{n-1}=H_{n-1}R_\alpha H_{n-1}^{-1}$\\
  $\mathfrak q_n\downarrow1$, $\gamma_n\downarrow1/2$, previous radii $\{r_j\}_{j<n}$}}}
\put(12,61){\framebox(116,12){\shortstack[c]{
  Determine $T_n$ and $S_n$ from $\kappa_{n-1}$\\
  Choose $N_n$ with $\ln N_n>S_n$ so that $\mathcal Z_{N_n,T_n}\le2$}}}
\put(0,43){\framebox(68,14){\shortstack[c]{
  $G_n=H_{n-1}g_{N_n,T_n}H_{n-1}^{-1}$\\
  $\|DG_n^{\ell_n}\|_\infty>4n$\\
  $K_n(t_n)/t_n>4n$,\quad $0<t_n<2^{-n}$}}}
\put(72,43){\framebox(68,14){\shortstack[c]{
  Choose amplitude $\lambda_n$ and degree $\widehat N_n$\\
  $\rho(F_n)=\alpha,\quad M_n=\lfloor\log_2\widehat N_n\rfloor$\\
  $\mathfrak E_{M_n}(F_n)>3n,\quad \mathfrak S_n(F_n)/t_n>3n$}}}
\put(12,27){\framebox(116,12){\shortstack[c]{
  Choose $r_n$ to preserve the three lower bounds $>2n$ in $B_{d_2}(F_n,r_n)$\\
  Analytically linearize $F_n$ to obtain $H_n$}}}
\put(12,0){\framebox(116,22){\shortstack[c]{
  $F_n\xrightarrow{C^2} F,\quad \rho(F)=\alpha,\quad d_2(F,R_\alpha)<\varepsilon$\\
  $F''\in\left(\bigcap_{q>1}B^0_{2,q}\right)\setminus B^0_{2,1},\quad
  \mathcal D_\gamma(F'')<\infty\ (1/2<\gamma\le1)$\\
  $\sup_\ell\|DF^\ell\|_\infty=\infty \Longrightarrow F$ and $R_\alpha$ are not $C^1$ conjugate\\
  $h(t_n)/t_n>2n\ (t_n\to 0) \Longrightarrow h\notin\mathrm{Lip}(\T),\ h'\notin L^\infty(\T)$}}}
\put(34,77){\vector(0,-1){4}}
\put(106,77){\vector(0,-1){4}}
\put(34,61){\vector(0,-1){4}}
\put(68,50){\vector(1,0){4}}
\put(106,43){\vector(0,-1){4}}
\put(70,27){\vector(0,-1){5}}
\put(128,33){\line(1,0){19}}
\put(147,33){\line(0,1){49}}
\put(147,82){\vector(-1,0){7}}
\put(145,68){\makebox(0,0)[r]{\scriptsize Next stage}}
\end{picture}%
}
\par\smallskip
{\footnotesize\textsc{Figure \thefigure.} Flowchart of the recursive construction.\par}
\end{minipage}
\par\medskip

\section{Density and meagreness of the family of counterexamples}
\label{sec:density-category}

\begin{proof}[Proof of Corollary~\ref{cor:dense-meagre}]
We first show that the defining properties of $\mathcal E_\alpha$ are preserved under smooth conjugation, and then prove density and meagreness separately.

\medskip\noindent\textbf{Invariance under smooth conjugation.}
Consider the following direct consequence of the Fourier block estimates. Let $Q\in\mathcal D^\infty(\T)$ and let $a\in C^1(\T)$ be real-valued. Define
\[
 Tw\defeq a(w\circ Q).
\]
For every continuous periodic function $w$ for which the right-hand side is finite, we have
\[
 \mathcal B_q(Tw)\le C\mathcal B_q(w)
 \qquad (1\le q\le\infty).
\]
To prove this estimate, extend the notation for sharp frequency blocks by setting $\Pi_{-1}w\defeq\widehat w(0)$, and write $H\defeq Q^{-1}$. The $L^2$ adjoint is
\[
 T^*v=(a\circ H)H'(v\circ H),
 \qquad \int_\T (Tw)\overline v\,dx
        =\int_\T w\,\overline{T^*v}\,dx.
\]
Both $T$ and $T^*$ are bounded on $W^{1,2}(\T)$. If $j\ge k$ and $j\ge0$, then
\[
 \|\Pi_jT\Pi_kw\|_2
 \le(2\pi2^j)^{-1}\|(T\Pi_kw)'\|_2
 \le C2^{k-j}\|\Pi_kw\|_2.
\]
Applying the same estimate to $T^*$ and using $L^2$ duality gives the corresponding bound for $k>j$; the case $j=k=-1$ follows directly from $L^2$ boundedness. Thus,
\begin{equation}\label{eq:smooth-conjugation-blocks}
 \|\Pi_jT\Pi_k\|_{L^2\to L^2}
 \le C2^{-|j-k|}\qquad (j,k\ge-1).
\end{equation}
For a trigonometric polynomial $w$, the discrete convolution inequality gives
\[
 \bigl\|(\|\Pi_jTw\|_2)_{j\ge-1}\bigr\|_{\ell^q}
 \le C\left(\sum_{r\in\Z}2^{-|r|}\right)
        \bigl\|(\|\Pi_kw\|_2)_{k\ge-1}\bigr\|_{\ell^q}.
\]
For a general continuous function $w$, approximation by Fourier truncations and Fatou's lemma yield the same bound. Together with the norm equivalence in Proposition~\ref{appB:block-calculus}, this proves the Besov bound above. If $\min a>0$, the inverse operator $T^{-1}v=(v/a)\circ H$ has the same form and satisfies analogous estimates.

Fix $H\in\mathcal D^\infty(\T)$ and $f\in\mathcal F_\alpha^2$, and write $Q\defeq H^{-1}$ and $G\defeq HfH^{-1}$. The second derivative satisfies
\begin{equation}\label{eq:smooth-conjugation-curvature}
 G''=a(f''\circ Q)+b,
\end{equation}
where
\begin{align*}
 a&\defeq(H'\circ f\circ Q)(Q')^2,\\
 b&\defeq(H''\circ f\circ Q)(f'\circ Q)^2(Q')^2
        +(H'\circ f\circ Q)(f'\circ Q)Q''.
\end{align*}
Here $a,b\in C^1(\T)$ and $\min a>0$. Bernstein's inequality gives
\[
 \sum_{j\ge0}\|\Pi_jb\|_2
 \le\frac{\|b'\|_2}{2\pi}\sum_{j\ge0}2^{-j}<\infty,
\]
so $b\in B^0_{2,1}(\T)$. Applying the estimates for $T$ and $T^{-1}$ to~\eqref{eq:smooth-conjugation-curvature} yields
\[
 G''\in B^0_{2,q}(\T)\iff
 f''\in B^0_{2,q}(\T)\qquad (1\le q\le\infty).
\]
In particular, nonmembership at the endpoint $q=1$ is preserved under this transformation.

For the weak Dini condition, set $L\defeq\max\{1,\|Q'\|_\infty\}$. For a continuous function $w$, the modulus of continuity satisfies
\[
 \omega_{a(w\circ Q)}(t)
 \le\|a\|_\infty\omega_w(Lt)
        +\|a'\|_\infty\|w\|_\infty t.
\]
Split the integral at $t=e^{-1}/L$. In the first part, make the change of variables $s=Lt$ and use $\ln(L/s)\ge\ln(1/s)$ to obtain, for every $0\le\gamma\le1$,
\[
 \mathcal D_\gamma\bigl(a(w\circ Q)\bigr)
 \le C_{a,Q,\gamma}
        \bigl(\mathcal D_\gamma(w)+\|w\|_\infty\bigr).
\]
The second part is trivially bounded by $C\|w\|_\infty$. Since $\omega_b(t)\le\|b'\|_\infty t$, the term $b$ in~\eqref{eq:smooth-conjugation-curvature} satisfies, for every such $\gamma$, the condition $\mathcal D_\gamma(b)<\infty$.

By Proposition~\ref{prop:conjugacy-regularity}, the conjugacies are all absolutely continuous. Let $\theta\in\R$ be the unique real number satisfying $H(h_f(\theta))=0$. By the normalization convention,
\[
 h_G(x)=H(h_f(x+\theta)),\qquad
 h_G'(x)=H'(h_f(x+\theta))h_f'(x+\theta)
 \qquad\text{almost everywhere}.
\]
Since the positive periodic function $H'$ is bounded above and bounded away from zero, $h_G'\notin L^\infty(\T)\iff h_f'\notin L^\infty(\T)$. By Proposition~\ref{lem:rotation-basic}, the rotation number is invariant under conjugation. Hence $HfH^{-1}\in\mathcal E_\alpha$ whenever $f\in\mathcal E_\alpha$.

\medskip\noindent\textbf{Density.}
The subclass $\mathcal F_\alpha^\omega$ is dense in $\mathcal F_\alpha^2$ in the $C^2$ topology. Indeed, for any $F\in\mathcal F_\alpha^2$, approximate in the $C^2$ norm the periodic function $F-\id$ by its Fej\'er means, and denote the corresponding analytic lifts by $A_n$. Since $\min F'>0$, one has $A_n'>0$ for all sufficiently large $n$. Set $\eta_n\defeq\|A_n-F\|_\infty$. Then we have the pointwise bounds
\[
 A_n-\eta_n\le F\le A_n+\eta_n.
\]
By the monotonicity and continuity of the rotation number in Proposition~\ref{lem:rotation-basic}, there is a constant $\tau_n\in[-\eta_n,\eta_n]$ such that $\rho(A_n+\tau_n)=\alpha$. Hence $A_n+\tau_n\in\mathcal F_\alpha^\omega$ and $d_2(A_n+\tau_n,F)\to0$.

Let $F\in\mathcal F_\alpha^2$ and $\varepsilon>0$ be arbitrary. Choose $G_0\in\mathcal F_\alpha^\omega$ such that $d_2(G_0,F)<\varepsilon/2$. Since $\alpha$ is of constant type, the Herman--Yoccoz global analytic linearization theorem \cite[Corollary, p.~335]{Yoc84} gives $H\in\mathcal D^\omega(\T)$ such that $G_0=HR_\alpha H^{-1}$. Applying Theorem~\ref{thm:main} with $q_*=2$, we find, arbitrarily close to $R_\alpha$ in the metric $d_2$, a map $f\in\mathcal E_\alpha$. Specifically, choose $f$ such that
\[
 d_2(f,R_\alpha)
 <\min\left\{\frac12,
        \frac{\varepsilon}{2C_{\mathrm{conj}}(H)}\right\}.
\]
The derivative bounds required in~\eqref{eq:conjugation-stability} then hold, giving
\[
 d_2(HfH^{-1},G_0)<\frac{\varepsilon}{2}.
\]
Conjugation invariance implies that $HfH^{-1}\in\mathcal E_\alpha$, and
\[
 d_2(HfH^{-1},F) \le d_2(HfH^{-1},G_0) + d_2(G_0,F) < \varepsilon.
\]

\medskip\noindent\textbf{Meagreness.}
For $M\in\N$, define
\[
 \mathcal A_M\defeq
 \{F\in\mathcal F_\alpha^2:\mathcal D_1(F'')\le M\}.
\]
This set is closed in $\mathcal F_\alpha^2$. Indeed, for continuous periodic functions $u,v$, we have
\[
 |\omega_u(t)-\omega_v(t)|\le2\|u-v\|_\infty,
\]
so, if $F_n\to F$ in the $C^2$ topology, Fatou's lemma gives $\mathcal D_1(F'')\le\liminf_{n\to\infty}\mathcal D_1(F_n'')$.

To show that $\mathcal A_M$ has empty relative interior, fix $F\in\mathcal F_\alpha^2$ and $\varepsilon>0$, and choose $0<\delta<\varepsilon$. For a sufficiently large integer $N$, set
\[
 \widetilde F_N(x)\defeq
 F(x)-\frac{\delta}{(2\pi N)^2}\sin(2\pi Nx).
\]
The derivative of this lift is strictly positive for sufficiently large $N$. As in the rotation-number correction above, choose a constant satisfying $|\tau_N|\le\delta/(2\pi N)^2$ such that $F_N\defeq\widetilde F_N+\tau_N\in\mathcal F_\alpha^2$. Then
\[
 d_2(F_N,F)\le\delta<\varepsilon,\qquad
 F_N''=F''+\delta\sin(2\pi Nx).
\]
At $x=1/(4N)$ and $y=3/(4N)$, the sine function takes the values $1$ and $-1$, respectively. Since $|x-y|=1/(2N)$, for all sufficiently large $N$ we have
\[
 \omega_{F_N''}\bigl((2N)^{-1}\bigr)
 \ge2\delta-\omega_{F''}\bigl((2N)^{-1}\bigr)
 \ge\delta.
\]
Consequently,
\[
 \mathcal D_1(F_N'')
 \ge\delta\int_{(2N)^{-1}}^{e^{-1}}
        \frac{dt}{t\ln(1/t)}
 =\delta\ln\ln(2N)\longrightarrow\infty.
\]
Thus every relative neighborhood of $F$ contains a point outside $\mathcal A_M$, so each $\mathcal A_M$ is closed and nowhere dense. Finally, the weak Dini condition in the definition of $\mathcal E_\alpha$ gives
\[
 \mathcal E_\alpha\subset\bigcup_{M=1}^{\infty}\mathcal A_M.\qedhere
\]
\end{proof}

\section{Global rigidity at the Besov endpoint}\label{sec:rigidity}

This section proves Theorem~\ref{thm:rigidity}. The argument establishes a scale-by-scale comparison between Fourier blocks and averages over dynamical orbit partitions, and combines this comparison with the Katznelson--Ornstein return distortion estimates to obtain absolute summability; the required dynamical results are collected in Appendix~\ref{app:KO}. Throughout this section, the constant $C$ may depend on the map $F$ and its rotation number $\alpha$.

\subsection{Multiplication by \texorpdfstring{$C^1$}{C1} functions}
\label{subsec:dynamical-averages}

To deduce from $F''\in B^0_{2,1}(\T)$ that $F''/F'\in B^0_{2,1}(\T)$, we establish the following estimate.
\begin{lemma}\label{lem:besov-C1-multiplier}
If $a\in C^1(\T)$ and $f\in B^0_{2,1}(\T)$, then
\[
 \|af\|_{B^0_{2,1}}
 \le C\bigl(\|a\|_\infty+\|a'\|_\infty\bigr)\|f\|_{B^0_{2,1}}.
\]
\end{lemma}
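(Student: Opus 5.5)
We prove the estimate by a paraproduct decomposition in the sharp dyadic blocks, working with the equivalent functional $\mathcal B_1$ (Proposition~\ref{appB:block-calculus}). Write $a=\sum_{k\ge-1}\Pi_ka$ and $f=\sum_{l\ge-1}\Pi_lf$, where $\Pi_{-1}$ denotes the zero mode as in Section~\ref{sec:density-category}. Then $\Pi_j(af)=\sum_{k,l}\Pi_j(\Pi_ka\,\Pi_lf)$. Since $\Pi_ka\,\Pi_lf$ has frequencies of size at most $2^{k+1}+2^{l+1}$, the term vanishes unless $j\le\max\{k,l\}+2$. Moreover, if $k\le l-3$, the frequencies are of size comparable to $2^l$, so $j\in\{l-1,l,l+1\}$.

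The proof splits the double sum into two pieces.

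\textbf{Low--high part ($k\le l-3$).} We group these terms as $(S_{l-3}a)\,\Pi_lf$, where $S_{l-3}a=\sum_{k\le l-3}\Pi_ka$. Since $\Pi_j$ is an $L^2$ contraction,
\[
\|\Pi_j(S_{l-3}a\,\Pi_lf)\|_2\le\|S_{l-3}a\|_\infty\|\Pi_lf\|_2 .
\]
It is cleaner to avoid estimating $\|S_{l-3}a\|_\infty$ through Dirichlet-kernel bounds. Instead, write $S_{l-3}a=a-\sum_{k>l-3}\Pi_ka$, which gives
\[
\|\Pi_j(S_{l-3}a\,\Pi_lf)\|_2\le\|a\|_\infty\|\Pi_lf\|_2+\sum_{k>l-3}\|\Pi_ka\|_\infty\|\Pi_lf\|_2 .
\]
Cauchy--Schwarz on each block gives $\|\Pi_ka\|_\infty\le(\#I_k)^{1/2}\|\Pi_ka\|_2\le C2^{k/2}\cdot2^{-k}\|a'\|_2$. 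Hence $\sum_{k>l-3}\|\Pi_ka\|_\infty\le C2^{-l/2}\|a'\|_\infty\le C\|a'\|_\infty$. Each $l$ contributes to at most three values of $j$, so summing over $j$ bounds this part by $C(\|a\|_\infty+\|a'\|_\infty)\mathcal B_1(f)$.

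\textbf{High--low and comparable part ($k\ge l-2$).} Here we bound crudely, with $\Pi_j$ acting as a contraction on at most $k+3$ relevant values of $j$:
\[
\sum_j\|\Pi_j(\Pi_ka\,\Pi_lf)\|_2\le(k+3)\|\Pi_ka\|_\infty\|\Pi_lf\|_2 .
\]
Using $\|\Pi_ka\|_\infty\le C2^{-k/2}\|a'\|_\infty$ from the previous step, the sum over $k\ge l-2$ of $(k+3)2^{-k/2}$ is bounded by an absolute constant. This part is therefore at most $C\|a'\|_\infty\sum_l\|\Pi_lf\|_2$.

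\textbf{Technical point.} The only delicate issue is justifying the rearrangement for general $f\in B^0_{2,1}$, and it is routine. First prove the bound for trigonometric polynomials $f$. Then pass to the limit: the Fourier truncations $f_n\to f$ in $L^2$, so $af_n\to af$ in $L^2$, and each block $\Pi_j(af_n)\to\Pi_j(af)$. Fatou's lemma for the $\ell^1$ sum then gives the bound for $\mathcal B_1(af)$, and the norm equivalence transfers it to $\|af\|_{B^0_{2,1}}$.

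I expect the main obstacle to be the low--high paraproduct term. The point there is to get a uniform $L^\infty$ bound on the low-frequency part of $a$ without losing a logarithm. The decomposition $S_{l-3}a=a-{}$(high tail), combined with the $2^{-k/2}$ decay of the high blocks of a $C^1$ function, handles it.
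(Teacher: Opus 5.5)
Your proof is correct, but it takes a different route from the paper's.

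\textbf{The paper's route.} The paper works directly with the smooth blocks $\mathsf P_j$ and proves the commutator estimate $\|\mathsf P_j(af)-a\,\mathsf P_jf\|_2\le C2^{-j}\|a'\|_\infty\|f\|_2$. It writes the difference as $\int_\T\mathcal K_j(y)\bigl(a(x-y)-a(x)\bigr)f(x-y)\,dy$ and bounds the first moment $\int_\T\dist(y,\Z)|\mathcal K_j(y)|\,dy\le C2^{-j}$ through third differences of the smooth cutoff. Summing $\|\mathsf P_j(af)\|_2\le\|a\|_\infty\|\mathsf P_jf\|_2+C2^{-j}\|a'\|_\infty\|f\|_2$ over $j$ then finishes. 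This needs the smoothness of the cutoff, since the sharp block kernels have first moments of order one. In exchange, it needs no norm equivalence and no approximation step.

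\textbf{Your route.} Your paraproduct decomposition in the sharp blocks uses only Plancherel, Cauchy--Schwarz on a block, and Bernstein. Its two key ingredients are the estimate $\|\Pi_ka\|_\infty\le C2^{-k/2}\|a'\|_2$ and writing $S_{l-3}a$ as $a$ minus its high tail, which does avoid the logarithmic loss. The cost is reliance on $\mathcal B_1\asymp\|\cdot\|_{B^0_{2,1}}$ (Proposition~\ref{appB:block-calculus}) and on the Fatou limit. You handle both correctly, using $\|\Pi_lf_n\|_2\le\|\Pi_lf\|_2$ for Fourier truncations. The gain is a slightly stronger statement: your estimate involves only $\|a\|_\infty$ and $\|a'\|_2$, so it extends to $a\in W^{1,2}(\T)$.

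\textbf{Two harmless slips.} First, in the high--low part the zero mode $k=-1$ appears when $l\le1$. It must be bounded by $|\widehat a(0)|\le\|a\|_\infty$, not by $\|a'\|_\infty$. Second, the number of relevant $j$ there is $k+5$, running from $-1$ to $\max\{k,l\}+1\le k+3$, not $k+3$. Neither affects the conclusion.
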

\begin{proof}
Write $f_j\defeq\mathsf P_jf$. By Definition~\ref{def:besov}, $\sum_{j\ge0}\|f_j\|_2=\|f\|_{B^0_{2,1}}<\infty$, so $\sum_{j\ge0}f_j$ converges absolutely in $L^2(\T)$ to $f$, and
\[
 \|f\|_2\le\sum_{j\ge0}\|f_j\|_2=\|f\|_{B^0_{2,1}}.
\]
For $j\ge1$, write $\psi(\xi)\defeq\chi(\xi)-\chi(2\xi)$ and
\[
 b_{j,k}\defeq\psi(2^{-j}k),\qquad
 \mathcal K_j(y)\defeq\sum_{k\in\Z}b_{j,k}e^{2\pi iky}.
\]
By the definition of the Fourier multiplier, every $v\in L^2(\T)$ admits the convolution representation $\mathsf P_jv(x)=\int_\T\mathcal K_j(y)v(x-y)\,dy$.

Define $\Delta b_{j,k}\defeq b_{j,k}-b_{j,k-1}$. Three successive termwise summations give
\[
 (1-e^{2\pi iy})^3\mathcal K_j(y)
 =\sum_{k\in\Z}\Delta^3b_{j,k}e^{2\pi iky},
\]
where the third difference is represented by
\[
 \Delta^3b_{j,k}
 =2^{-3j}\int_{[0,1]^3}
 \psi'''\bigl(2^{-j}(k-t_1-t_2-t_3)\bigr)
 \,dt_1\,dt_2\,dt_3.
\]
Since $\supp\psi\subset[-2,2]$, the sum has at most $C2^j$ nonzero terms, giving $\sum_k|\Delta^3b_{j,k}|\le C2^{-2j}$ and $\sum_k|b_{j,k}|\le C2^j$. For $t\defeq\dist(y,\Z)\in(0,1/2]$, the inequality $|1-e^{2\pi iy}|=2\sin(\pi t)\ge4t$ yields
\[
 |\mathcal K_j(y)|
 \le C\min\{2^j,\,2^{-2j}t^{-3}\}.
\]
Splitting the integral over $[-1/2,1/2]$ gives
\begin{equation}\label{eq:kernel-first-moment}
 \int_\T\dist(y,\Z)|\mathcal K_j(y)|\,dy
 \le 2\left(2^j\int_0^{2^{-j}}t\,dt+2^{-2j}\int_{2^{-j}}^{1/2}t^{-2}\,dt\right)
 \le C2^{-j}.
\end{equation}

Since $a\in C^1(\T)$, we have $|a(x-y)-a(x)|\le\|a'\|_\infty\dist(y,\Z)$. By the convolution formula,
\[
 \mathsf P_j(af)(x)-a(x)f_j(x)
 =\int_\T\mathcal K_j(y)\bigl(a(x-y)-a(x)\bigr)f(x-y)\,dy.
\]
Together with~\eqref{eq:kernel-first-moment}, this gives, for every $j\ge1$,
\[
 \|\mathsf P_j(af)-af_j\|_2
 \le \|a'\|_\infty\|f\|_2\int_\T\dist(y,\Z)|\mathcal K_j(y)|\,dy
 \le C2^{-j}\|a'\|_\infty\|f\|_2.
\]
Applying the triangle inequality in $L^2$, we obtain
\[
 \|\mathsf P_j(af)\|_2
 \le\|a\|_\infty\|f_j\|_2+C2^{-j}\|a'\|_\infty\|f\|_2\qquad(j\ge1).
\]
For $j=0$, Plancherel's identity gives $\|\mathsf P_0(af)\|_2\le\|af\|_2\le\|a\|_\infty\|f\|_2$. Summing over all $j\ge0$ and substituting $\|f\|_2\le\|f\|_{B^0_{2,1}}$ gives
\[
 \|af\|_{B^0_{2,1}}
 =\sum_{j\ge0}\|\mathsf P_j(af)\|_2
 \le C\bigl(\|a\|_\infty+\|a'\|_\infty\bigr)\|f\|_{B^0_{2,1}}.\qedhere
\]
\end{proof}

\subsection{Proof of Theorem~\ref{thm:rigidity}}
\label{subsec:global-linearization}

\subsubsection{Dynamical partitions and measures of atoms}
\label{subsubsec:orbit-partitions}
Let $F\in\mathcal F_\alpha^2$ have a rotation number $\alpha$ of constant type, and suppose that $F''\in B^0_{2,1}(\T)$. Since $\min_\T F'>0$, Lemma~\ref{lem:besov-C1-multiplier} gives
\[
 g\defeq F''/F'\in B^0_{2,1}(\T).
\]
Since $g=D\ln F'$ and $F'$ is periodic, we have $\int_\T g=0$. By Denjoy's theorem, there is a unique normalized topological conjugacy $h$ satisfying~\eqref{eq:forward-conjugacy}; Proposition~\ref{appC:KO-interval-ratios} asserts that $h$ is absolutely continuous and $h'\in L^2(\T)$. Write
\[
 H\defeq\|h'\|_2^2.
\]
The Cauchy--Schwarz inequality gives
\[
 H\ge\left(\int_\T h'\right)^2=1.
\]

Let $p_n/q_n$ be the sequence of continued-fraction convergents of $\alpha$. For a measurable set $E\subset\T$, write $|E|$ for its Lebesgue measure. For the rigid rotation $R_\alpha$, denote the circle partition determined by the orbit points $\{k\alpha\pmod1: 0\le k<q_n\}$ by $P_n(R_\alpha)$, and denote its open interval components by $J$. Since the points $\{kp_n/q_n\pmod1: 0\le k<q_n\}$ form a uniform grid of spacing $q_n^{-1}$ and $|\alpha-p_n/q_n|<(q_nq_{n+1})^{-1}$, we obtain
\begin{equation}\label{eq:rotation-mesh-elementary}
 |J|\le q_n^{-1}+2q_{n+1}^{-1}\le\frac3{q_n}.
\end{equation}

Let $P_n(F)$ denote the dynamical partition generated by the orbit points $\{F^k(0)\pmod1: 0\le k<q_n\}$. Its interval components are called atoms and denoted by $I$; its mesh is defined by
\begin{equation}\label{eq:mesh-definition}
 \operatorname{mesh}P_n(F)\defeq\max_{I\in P_n(F)}|I|.
\end{equation}
For an integrable function $f$, define
\begin{equation}\label{eq:dynamical-average}
 (E_nf)|_I\defeq\frac1{|I|}\int_I f(x)\,dx\qquad(I\in P_n(F)),
\end{equation}
and set $E_0f\defeq\int_\T f$. By the Cauchy--Schwarz inequality, each atom satisfies
\[
 |I|\,|(E_nf)|_I|^2\le\int_I|f|^2,
\]
so, for the partition $P_n(F)$ and any union of its atoms $B\subset\T$, summation gives $\|\mathbf1_BE_nf\|_2\le\|\mathbf1_Bf\|_2$.

By the conjugacy relation, every atom has the form $I=h(J)$. In view of the absolute continuity of $h$ and the Cauchy--Schwarz inequality, we always have
\[
 |I|^2=\left(\int_Jh'(x)\,dx\right)^2\le |J|\int_J|h'(x)|^2\,dx.
\]
Thus, for every scale $L>0$,
\begin{equation}\label{eq:short-atom-measure}
 \left|\bigcup_{\substack{I\in P_n(F)\\ |I|<L}}I\right|\le q_nL,\qquad
 \left|\bigcup_{\substack{I\in P_n(F)\\ |I|\ge L}}I\right|\le L^{-1}\sum_{I\in P_n(F)}|I|^2\le\frac{3H}{q_nL}.
\end{equation}

\subsubsection{A comparison principle for Fourier blocks and dynamical partitions}
\label{subsubsec:comparison-lemma}

\begin{lemma}\label{lem:Fourier-orbit-comparison}
There is a constant $C_H$, depending only on the constant-type bound and $H$, such that every $f\in B^0_{2,1}(\T)$ with $\int_\T f=0$ satisfies
\begin{equation}\label{eq:Fourier-orbit-comparison}
 \sum_{n\ge1}\|(E_n-E_{n-1})f\|_{3/2}
 \le C_H\|f\|_{B^0_{2,1}}.
\end{equation}
\end{lemma}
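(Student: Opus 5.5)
By linearity and the triangle inequality, decompose $f=\sum_{j\ge0}\Pi_jf$ (the mean vanishes). It then suffices to show that for every $j\ge0$ and every trigonometric polynomial $w$ with frequencies in $I_j$,
\[
\sum_{n\ge1}\|(E_n-E_{n-1})w\|_{3/2}\le C_H\|w\|_2 .
\]
Summing over $j$ and using $\sum_j\|\Pi_jf\|_2\asymp\|f\|_{B^0_{2,1}}$ (Proposition~\ref{appB:block-calculus}) then gives \eqref{eq:Fourier-orbit-comparison}. Fix such a block $w$ at frequency $2^j$, and let $n_j$ be the characteristic index with $q_{n_j}\approx 2^j$; constant type gives $q_{n+1}\le c^{-1}q_n$. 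I will split the sum over $n$ at $n_j$. Write $\Phi$ for the zero-mean periodic primitive of $w$; Bernstein gives $\|\Phi\|_\infty\le C2^{-j/2}\|w\|_2$ and $\|w'\|_2\le C2^j\|w\|_2$.

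\emph{Low levels ($n\le n_j$).} On an atom $I=[x,y]$, write $(E_nw)|_I=(\Phi(y)-\Phi(x))/|I|$. On the union of atoms with $|I|\ge L$ this is at most $2\|\Phi\|_\infty/L$, and by \eqref{eq:short-atom-measure} that union has measure at most $3H/(q_nL)$. On the short atoms I use only the trivial bound $\|\mathbf 1_BE_nw\|_2\le\|\mathbf 1_Bw\|_2$. Passing from $L^2$ to $L^{3/2}$ is the key device: on a set $B$, $\|\mathbf 1_B u\|_{3/2}\le |B|^{1/6}\|u\|_2$. So the short atoms contribute $(q_nL)^{1/6}\|w\|_2$. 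I choose $L=\theta/q_n$ with $\theta=(q_n/q_{n_j})^{a}$ for a small power $a$, so that $(q_nL)^{1/6}$ decays geometrically as $n$ decreases below $n_j$. The long-atom part gives, with $L^{3/2}$ measure factor $(H/(q_nL))^{2/3}$,
\[
\frac{2^{-j/2}}{L}\Bigl(\frac{H}{q_nL}\Bigr)^{2/3}\|w\|_2 \approx \Bigl(\frac{q_n}{2^j}\Bigr)^{1/2}\theta^{-5/3}H^{2/3}\|w\|_2 .
\]
This is geometric in $n_j-n$ provided $a$ is small. Here the geometric growth of $q_n$ (ratio at least $\sqrt2$ every two steps) makes all these sums converge to $C_H\|w\|_2$. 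Bounding $E_{n-1}$ the same way handles the difference.

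\emph{High levels ($n>n_j$).} I compare $E_nw$ with $w$: $\|(E_n-E_{n-1})w\|\le\|E_nw-w\|+\|E_{n-1}w-w\|$. On each atom, $|w-E_nw|\le\int_I|w'|$, and by Cauchy--Schwarz $\|\mathbf 1_I(w-E_nw)\|_2\le|I|\,\|\mathbf 1_Iw'\|_2$. On short atoms ($|I|<L$) this gives $L\cdot 2^j\|w\|_2$. On long atoms I again use $\|\mathbf 1_B(E_nw-w)\|_{3/2}\le 2|B|^{1/6}\|w\|_2$ with $|B|\le 3H/(q_nL)$. Taking $L=q_n^{-1}(q_n/2^j)^{b}$ with $0<b<1$ makes both $L2^j=(2^j/q_n)^{1-b}$ and $(q_nL)^{-1/6}=(2^j/q_n)^{b/6}$ geometrically small in $n-n_j$. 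Summing over $n>n_j$ again yields $C_H\|w\|_2$.

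The main obstacle is the low-frequency regime. The naive bound $|(E_nw)|_I|\le 2\|\Phi\|_\infty/|I|$ blows up on short atoms, and because atoms of $P_n(F)$ need not be comparable to $1/q_n$ (only $h'\in L^2$ is known), I must rely solely on the measure estimates \eqref{eq:short-atom-measure}. The calibration of $L$ and the exponent bookkeeping so that both pieces decay geometrically in $|n-n_j|$ is the delicate step. The $L^{3/2}$ exponent (rather than $L^2$) is exactly what supplies the small positive power of the exceptional measure there. If the exponents fail to close, I would first try adjusting the power $a$ or replacing $3/2$ by a nearby exponent, keeping the same structure.
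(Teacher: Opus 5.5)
\textbf{Where the proposal breaks.} Your high-level regime ($q_n>2^j$) is correct and is essentially the paper's argument. With $b=1/2$, your $L=q_n^{-1}(q_n/2^j)^{1/2}=(2^jq_n)^{-1/2}$ is exactly the paper's choice, giving $(2^j/q_n)^{1/2}$ and $H^{1/6}(2^j/q_n)^{1/12}$.

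The low-level regime does not close. With $L=\theta/q_n$, your long-atom term is
\[
\frac{2^{-j/2}}{L}\Bigl(\frac{H}{q_nL}\Bigr)^{2/3}=H^{2/3}\,q_n2^{-j/2}\,\theta^{-5/3},
\]
not $(q_n/2^j)^{1/2}\theta^{-5/3}H^{2/3}$: you dropped a factor $q_n^{1/2}$. At $n=n_j$, where $\theta=1$ and $q_{n_j}\asymp2^j$, this is of order $2^{j/2}$.

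No tuning of $a$, of $L$, or of the exponent $3/2$ repairs this. The short-atom term gives decay only if $q_nL\ll1$. In that case your long-atom bound is at least of order $q_n2^{-j/2}$, and the same holds in any $L^p$ with $p>1$. That bound is unbounded on all of the $\asymp j$ levels with $2^{j/2}\ll q_n\le 2^j$. Estimating those levels trivially by $2\|w\|_2$ yields only $\sum_j j\|\Pi_jf\|_2$, a log-weighted condition strictly stronger than $B^0_{2,1}$.

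\textbf{The missing idea.} The uniform bound $\|\Phi\|_\infty\lesssim 2^{-j/2}\|w\|_2$ is too lossy. You need an $\ell^2$ sampling bound for $\Phi$ at the endpoints of the long atoms, which exploits $\|\Phi\|_2\le(2\pi2^j)^{-1}\|w\|_2$.

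The paper also takes $L=(2^jq_n)^{-1/2}$ in this regime. Then $\delta=2^{-j-1}\le L/2$ exactly when $q_n\le 2^j$. Average $\Phi(a)=\Phi(x)-\int_a^x w$ over $x\in[a,a+\delta]$, and do the same at the right endpoint $b$. Since these endpoint subintervals lie inside their atoms, they are disjoint across long atoms, and summing gives
\[
\sum_{|I|\ge L}\bigl(|\Phi(a_I)|^2+|\Phi(b_I)|^2\bigr)\lesssim 2^j\|\Phi\|_2^2+2^{-j}\|w\|_2^2\lesssim 2^{-j}\|w\|_2^2 .
\]
Hence
\[
\|\mathbf 1_{B^c}E_nw\|_2^2=\sum_{|I|\ge L}\frac{|\Phi(b_I)-\Phi(a_I)|^2}{|I|}\lesssim (2^jL)^{-1}\|w\|_2^2=(q_n/2^j)^{1/2}\|w\|_2^2 .
\]
Your short-atom bound is $(q_nL)^{1/6}=(q_n/2^j)^{1/12}$. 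The two together give geometric decay in $n_j-n$, and the long atoms here need no measure estimate at all, only this $L^2$ bound.
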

\begin{proof}
For an integer $N\ge1$, let $f_N$ be a trigonometric polynomial with Fourier support in $\{N\le|k|<2N\}$, and denote its zero-mean periodic primitive by $u_N$. Then
\begin{equation}\label{eq:block-derivative-primitive}
 \|f_N'\|_2\le4\pi N\|f_N\|_2,\qquad
 \|u_N\|_2\le(2\pi N)^{-1}\|f_N\|_2.
\end{equation}
Fix $n\ge1$, write $Q\defeq q_n$, and choose $L\defeq(NQ)^{-1/2}$. Set $B\defeq\bigcup_{I\in P_n(F),\,|I|<L}I$. For every measurable set $E\subset\T$, we have $\|\mathbf1_Ew\|_{3/2}\le |E|^{1/6}\|\mathbf1_Ew\|_2$.

Suppose that $Q\le N$. By~\eqref{eq:short-atom-measure},
\[
 \|\mathbf1_BE_nf_N\|_{3/2}\le(QL)^{1/6}\|f_N\|_2.
\]
For a long atom satisfying $|I|\ge L$, write $I=[a,b]$ and take the intervals contained in $I$ adjacent to its endpoints, namely $[a,a+\delta]$ and $[b-\delta,b]$, where $\delta\defeq(2N)^{-1}\le L/2$. Averaging $u_N(a)=u_N(x)-\int_a^x f_N$ over $x\in[a,a+\delta]$ and applying the Cauchy--Schwarz inequality gives
\[
 |u_N(a)|^2\le\frac2\delta\int_a^{a+\delta}|u_N|^2+2\delta\int_a^{a+\delta}|f_N|^2.
\]
Since the endpoint intervals of distinct long atoms have disjoint interiors, summing over all long atoms and using~\eqref{eq:block-derivative-primitive} gives
\[
 \sum_{|I|\ge L}\bigl(|u_N(a)|^2+|u_N(b)|^2\bigr)
 \le C\bigl(N\|u_N\|_2^2+N^{-1}\|f_N\|_2^2\bigr)
 \le CN^{-1}\|f_N\|_2^2.
\]
Since $(E_nf_N)|_I=(u_N(b)-u_N(a))/|I|$, we have
\[
 \|\mathbf1_{B^c}E_nf_N\|_2^2
 =\sum_{|I|\ge L}\frac{|u_N(b)-u_N(a)|^2}{|I|}
 \le\frac{C}{NL}\|f_N\|_2^2.
\]
Using $\|w\|_{3/2}\le\|w\|_2$ and $L=(NQ)^{-1/2}$, we obtain
\begin{equation}\label{eq:coarse-average-block}
 \|E_nf_N\|_{3/2}
 \le C\bigl((QL)^{1/6}+(NL)^{-1/2}\bigr)\|f_N\|_2
 \le C(Q/N)^{1/12}\|f_N\|_2\qquad(Q\le N).
\end{equation}

Now suppose that $Q\ge N$. For each interval $I$, write $(f_N)_I\defeq(E_nf_N)|_I$. The fundamental theorem of calculus gives
\[
\int_I|f_N-(E_nf_N)|_I|^2 \le |I|^2\int_I|f_N'|^2.
\]
This follows directly by writing $f_N(x)-(E_nf_N)|_I=|I|^{-1}\int_I(f_N(x)-f_N(y))\,dy$ and applying the Cauchy--Schwarz inequality twice. Summing over the atoms in $B$ gives
\[
 \|\mathbf1_B(f_N-E_nf_N)\|_2 \le L\|f_N'\|_2 \le CNL\|f_N\|_2.
\]
On $B^c$,~\eqref{eq:short-atom-measure} yields
\[
 \|\mathbf1_{B^c}(f_N-E_nf_N)\|_{3/2}\le 2|B^c|^{1/6}\|f_N\|_2\le 2\left(\frac{3H}{QL}\right)^{1/6}\|f_N\|_2.
\]
Substituting $L=(NQ)^{-1/2}$, we obtain
\begin{equation}\label{eq:fine-average-block}
 \|f_N-E_nf_N\|_{3/2}
 \le C\left(NL+\left(\frac{H}{QL}\right)^{1/6}\right)\|f_N\|_2
 \le C_H(N/Q)^{1/12}\|f_N\|_2\qquad(Q\ge N).
\end{equation}

Set
\[
 n_N\defeq\max\{n\ge0:q_n\le N\}.
\]
This index is well defined because $q_0=1\le N$ and $q_n\to\infty$, and it satisfies $q_{n_N}\le N<q_{n_N+1}$. Since $E_0f_N=\int_\T f_N=0$, for $1\le n\le n_N$ we apply~\eqref{eq:coarse-average-block} to $E_n$ and, when $n\ge2$, to $E_{n-1}$. This range is empty if $n_N=0$; for $n=n_N+1$, we have $\|(E_{n_N+1}-E_{n_N})f_N\|_{3/2}\le2\|f_N\|_2$; for $n\ge n_N+2$, apply~\eqref{eq:fine-average-block} to $(E_n-E_{n-1})f_N=(E_nf_N-f_N)-(E_{n-1}f_N-f_N)$. Since $q_{n+2}\ge2q_n$, we have
\[
 \sum_{1\le n\le n_N}(q_n/N)^{1/12}+\sum_{n\ge n_N+2}(N/q_{n-1})^{1/12}\le C,
\]
and hence
\begin{equation}\label{eq:single-block-sum}
 \sum_{n\ge1}\|(E_n-E_{n-1})f_N\|_{3/2}\le C_H\|f_N\|_2.
\end{equation}
For every zero-mean function $f\in B^0_{2,1}(\T)$, the dyadic expansion $f=\sum_{k\ge0}\Pi_kf$ converges absolutely in $L^2(\T)$. Applying~\eqref{eq:single-block-sum} and Proposition~\ref{appB:block-calculus} gives
\[
 \sum_{n\ge1}\|(E_n-E_{n-1})f\|_{3/2}
 \le C_H\sum_{k\ge0}\|\Pi_kf\|_2
 \le C_H\|f\|_{B^0_{2,1}}.\qedhere
\]
\end{proof}

\subsubsection{Summability estimates for return distortion}
\label{subsubsec:return-sum}
For $g\defeq F''/F'\in B^0_{2,1}(\T)$, define, as in~\eqref{eq:dynamical-average},
\begin{equation}\label{eq:KO-averaging}
 \mathsf G_n\defeq E_ng\quad(n\ge0),\qquad
 \mathsf g_n\defeq\mathsf G_n-\mathsf G_{n-1}\quad(n\ge1).
\end{equation}
Each $\mathsf G_n$ is constant on the atoms of $P_n(F)$, and $\int_J \mathsf g_n\,dx=0$ for every $J\in P_{n-1}(F)$. By Lemma~\ref{lem:Fourier-orbit-comparison},
\begin{equation}\label{eq:summable-averaging-hypothesis}
 \sum_{n\ge1}\|\mathsf g_n\|_{3/2}
 \le C_H\|g\|_{B^0_{2,1}}<\infty.
\end{equation}
By~\eqref{eq:rotation-mesh-elementary} and the uniform continuity of $h$, we have $\lim_{n\to\infty}\operatorname{mesh}P_n(F)=0$, and hence
\[
 \|\mathsf G_n-g\|_\infty
 \le\omega_g(\operatorname{mesh}P_n(F))\longrightarrow0 \qquad(n\to\infty).
\]
Since $\mathsf G_0=\int_\T g\,dx=0$, the series $\sum_{n\ge1}\mathsf g_n$ converges uniformly to $g$.

If an open arc $I$ has its first $q_n$ iterates $F^k(I)$ pairwise disjoint for $0\le k<q_n$, then $I$ is called a $q_n$-small interval. In addition to $K_n^0$ in~\eqref{eq:KO-return-distortion}, introduce
\begin{equation}\label{eq:KO-short-distortion}
 \widetilde K_n^1\defeq
 \sup_{\substack{0\le k<q_n\\ I=(x,y)\ q_n\text{-small}}}
 |\ln DF^k(y)-\ln DF^k(x)|.
\end{equation}
Since $F\in\mathcal F_\alpha^2$, for every fixed $n$ and $0\le k<q_n$, we have $DF^k>0$ and this derivative is uniformly continuous on $\T$, so $\widetilde K_n^1<\infty$. By Proposition~\ref{appC:KO-smallness}, for $n\ge2$ we have $K_n^0\le2\widetilde K_n^1$.

Take $p=3/2$ in Proposition~\ref{appC:KO-recursion}. Write
\[
 s_n\defeq \mathsf r_n+C\sum_{j\ge1}c_{nj}\|\mathsf g_j\|_{3/2} \qquad (n\ge n_0).
\]
By~\eqref{appC:KO-recursion-remainder},
\[
 \sup_{j\ge1}\sum_{n\ge n_0}c_{nj}
 \le\sum_{n\ge1}n\eta^{n/2}
    +\sum_{k\in\Z}\eta_1^{|k|}<\infty.
\]
Combining this with~\eqref{eq:summable-averaging-hypothesis} and interchanging the order of summation gives $\sum_{n\ge n_0}s_n<\infty$. Choose $\varepsilon>0$ sufficiently small that
\[
 \kappa\defeq \frac{C\varepsilon\eta}{1-\eta}<\frac{1}{2}.
\]
For the above choice of $\varepsilon$, fix the corresponding $n_0$. Summing~\eqref{appC:KO-recursion-estimate} over $n_0\le n\le N$ and applying Fubini's theorem gives
\[
 \sum_{n=n_0}^N\widetilde K_n^1
 \le C_1+\kappa\sum_{m=n_0}^N\widetilde K_m^1
    +\sum_{n\ge n_0}s_n,
\]
where the constant $C_1$ depends only on the finite initial segment $\widetilde K_1^1,\dots,\widetilde K_{n_0-1}^1$. Since $\kappa<1/2$, for every $N\ge n_0$ the preceding inequality gives
\[
 \sum_{n=n_0}^N\widetilde K_n^1
 \le \frac{C_1+\sum_{n\ge n_0}s_n}{1-\kappa}.
\]
Letting $N\to\infty$, we obtain
\begin{equation}\label{eq:global-summable-returns}
 \sum_{n\ge1}\widetilde K_n^1<\infty,
 \qquad\sum_{n\ge1}K_n^0<\infty.
\end{equation}

\begin{remark}\label{rem:C1-criterion-connection}
Once $\sum_{n\ge1}K_n^0<\infty$ is established in~\eqref{eq:global-summable-returns}, the conclusion $h\in\mathcal D^1(\T)$ also follows from Proposition~\ref{appC:KO-C1-criterion}. In Subsection~\ref{subsubsec:continuous-derivative}, we prove this conclusion using the interval-ratio estimate in Proposition~\ref{appC:KO-interval-ratios}, which gives uniform convergence of the dyadic difference quotients to a strictly positive continuous function.
\end{remark}

\subsubsection{Continuity and strict positivity of the derivative}
\label{subsubsec:continuous-derivative}
Let the indices $n_*,\ell(n)$ and the constants $\theta,L_A$ be as in Proposition~\ref{appC:KO-interval-ratios}, and use the sequence $(\beta_n)_{n\ge1}$ from Lemma~\ref{lem:KO-dyadic}. Its tail satisfies
\[
 \sum_{n\ge n_*}\beta_n
 \le C\sum_{m\ge1}K_m^0
       \sum_{\substack{n\ge n_*\\\ell(n)\le m}}
          \theta^{m-\ell(n)}
 \le\frac{CL_A}{1-\theta}\sum_{m\ge1}K_m^0<\infty.
\]
Together with~\eqref{eq:global-summable-returns}, this yields $\sum_{n\ge1}\beta_n<\infty$. Define
\[
 v_n(x)\defeq 2^n\bigl(h(x+2^{-n})-h(x)\bigr) \qquad (n\ge1).
\]
Set $\delta\defeq 2^{-n-1}$ and $\xi_n(x)\defeq v_{n+1}(x+\delta)/v_{n+1}(x)$. By the interval-ratio estimate in Proposition~\ref{appC:KO-interval-ratios}, we have $|\xi_n(x)-1|\le\beta_{n+1}$, and
\[
 v_n(x)=\frac{1}{2}v_{n+1}(x)\bigl(1+\xi_n(x)\bigr).
\]
Since $\beta_n\to0$, for all sufficiently large $n$ we have
\[
 \frac{2}{2+\beta_{n+1}}
 \le\frac{v_{n+1}(x)}{v_n(x)}
 \le\frac{2}{2-\beta_{n+1}},
 \qquad
 \left|\frac{v_{n+1}(x)}{v_n(x)}-1\right|\le\beta_{n+1}\le\frac{1}{2}.
\]
Since $\sum_{n\ge1}\beta_n<\infty$, the infinite product $\prod_{n\ge1}(v_{n+1}/v_n)$ converges uniformly on $\T$. Thus there are constants $0<c_F\le C_F<\infty$ such that $c_F\le v_n(x)\le C_F$ for all sufficiently large $n$. It follows that
\[
 \|v_{n+1}-v_n\|_\infty\le C_F\beta_{n+1}.
\]
Hence $(v_n)_{n\ge1}$ is a Cauchy sequence in $C(\T)$ and converges uniformly to a continuous limit $v$ satisfying $v\ge c_F$.

Fix $x\in\R$. Changing variables in the two integrals and canceling the common terms gives
\[
 \int_0^x v_n(s)\,ds
 =2^n\int_0^{2^{-n}}\bigl(h(x+t)-h(t)\bigr)\,dt.
\]
Since $h$ is continuous and $v_n\to v$ uniformly, letting $n\to\infty$ gives
\[
 \int_0^x v(s)\,ds = h(x)-h(0).
\]
Thus $h$ is differentiable everywhere and $h'=v\in C(\T)$. Since $h'\ge c_F>0$, the map $h$ is a $C^1$ diffeomorphism, that is, $h\in\mathcal D^1(\T)$.

Finally, by~\eqref{appB:lifting}, $F''\in B^0_{2,1}(\T)$ is equivalent to $F-R_\alpha\in B^2_{2,1}(\T)$.\qed

\begin{remark}
\label{rem:rigidity-methods}
Proposition~\ref{appC:KO-C1-criterion} summarizes two conclusions from~\cite{KO89D}: for a rotation number of constant type, absolute summability of $K_n^0$ implies $C^1$ conjugacy; if $F''\in C^\sigma(\T)$, $0<\sigma<1$, then $K_n^0$ decays exponentially. The former is established by decomposing circle translations into continued-fraction returns and controlling the derivative products arising from the chain rule, whereas the latter relies on a recursion for nonlinear distortion on return intervals. At the critical $C^2$ regularity, Proposition~\ref{appC:KO-smallness} gives only $\widetilde K_n^1\to0$, which does not a priori ensure $\sum_{n\ge1}K_n^0<\infty$.

The absolute continuity argument in~\cite{KO89} requires only that $\ln F'$ be absolutely continuous and $D\ln F'\in L^p(\T)$ for $p>1$; Proposition~\ref{appC:KO-square-summability} states the square summability of distortion that this argument yields for $C^2$ maps. Its main step is to take martingale-type averages of the logarithmic derivative along nested orbit partitions and control them using the convolution-type recursion in Proposition~\ref{appC:KO-recursion}. In particular, for rotation numbers of constant type, Proposition~\ref{appC:KO-interval-ratios} ensures that $h$ and $h^{-1}$ have $L^2$ derivatives.

Tong, Xiao, and Li~\cite{TXL} use cross-ratio distortion estimates controlled by the modulus of continuity of $F''$ to establish Denjoy-type bounds for $DF^{q_n}-1$. Absolute summation of the return errors yields a continuous positive density for the invariant measure and hence the inverse conjugacy; see~\cite[Proposition 3.2, Theorems 4.1--4.2]{TXL}. In the constant-type case, their sufficient condition is $\mathcal D_0(F'')<\infty$; see~\cite[Corollary 4.1, Remark 4.4]{TXL}. Summing the modulus estimate in~\cite[Eq.~4.7]{TXL} over the characteristic dynamical scales naturally corresponds to the classical Dini condition. Indeed, for any fixed $0<\lambda<1$, Tonelli's theorem gives
\[
 \sum_{n\ge0}\lambda^n\int_{\lambda^n}^{1}
        \frac{\omega_{F''}(t)}{t^2}\,dt
 =\int_0^1\frac{\omega_{F''}(t)}{t^2}
        \sum_{\{n:\,\lambda^n\le t\}}\lambda^n\,dt
 \asymp\int_0^1\frac{\omega_{F''}(t)}t\,dt.
\]
After establishing $C^1$ conjugacy, Tong, Xiao, and Li~\cite[Theorem 4.4, Corollary 4.2]{TXL} use the tails of the return-error sums to obtain quantitative bounds for the modulus of continuity of the derivative of the conjugacy.

The proof of Theorem~\ref{thm:rigidity} combines Besov estimates with dynamical averaging increments. Equation~\eqref{eq:Fourier-orbit-comparison} directly converts summability of the Fourier block series of the logarithmic derivative into $L^{3/2}$ summability of the averaging increments over orbit partitions, with the measure estimate for exceptional atoms contributing a regularity gain of exponent $1/6$. Substituting this $L^{3/2}$ summability into the linear convolution system in Proposition~\ref{appC:KO-recursion} and absorbing the small perturbation yields the summability criterion in Proposition~\ref{appC:KO-C1-criterion}. Finally, Proposition~\ref{appC:KO-interval-ratios} shows that the dyadic difference quotients form a Cauchy sequence in the uniform norm, establishing that $h'\in C(\T)$ and is bounded away from zero. In particular, the condition $F''\in B^0_{2,1}(\T)$ provides a purely analytic criterion for $C^1$ conjugacy; by Lemma~\ref{lem:half-dini}, this condition follows from the weak Dini condition $\mathcal D_{1/2}(F'')<\infty$.
\end{remark}

\appendix
\section{Proof of the finite-band square estimate}\label{app:square}

\begin{proof}[Proof of Lemma~\ref{lem:finite-band}]
We argue by induction on the number of nonzero dyadic blocks. The inequality is immediate when there is at most one nonzero dyadic block. Suppose that the assertion holds for every family with fewer nonzero blocks, and write
\[
 W\defeq\sum_sW_s,\qquad
 A_s\defeq\|W_s\|_4,\qquad
 \mathcal A\defeq\left(\sum_sA_s^2\right)^{1/2}.
\]
Decompose the square as $W^2=\mathcal H+2\mathcal T$, where
\[
 \mathcal H\defeq\sum_{|s-t|\le1}W_sW_t,
 \qquad
 \mathcal G_s\defeq\sum_{t\le s-2}W_t,
 \qquad
 \mathcal T\defeq\sum_sW_s\mathcal G_s.
\]
H\"older's inequality $\|W_sW_t\|_2\le A_sA_t$ gives directly
\begin{equation}\label{eq:resonant}
 \|\mathcal H\|_2
 \le\sum_s(A_s^2+A_sA_{s-1}+A_sA_{s+1})
 \le3\mathcal A^2.
\end{equation}

For $s\ge0$, write $X_s\defeq W_s\mathcal G_s$, with $X_0=X_1=0$. For $s\ge2$, since $\supp\widehat W_s\subset I_s$ and $\supp\widehat{\mathcal G_s}\subset\{|k|<2^{s-1}\}$, we have
\[
 \supp\widehat X_s
 \subset\{k\in\Z:2^{s-1}<|k|<5\cdot2^{s-1}\}.
\]
Each nonzero frequency belongs to at most three such frequency bands, as the admissible indices lie in an interval of length $\log_2 5 < 3$. At each frequency, the Cauchy--Schwarz inequality bounds the square of the sum of at most three coefficients by three times the sum of their squares. Summing over all frequencies and applying Plancherel's identity yields
\[
 \left\|\sum_sX_s\right\|_2^2\le3\sum_s\|X_s\|_2^2.
\]
For each $W_s\ne0$, the sum $\mathcal G_s$ contains strictly fewer nonzero blocks. By H\"older's inequality and the induction hypothesis,
\begin{align*}
 \|2\mathcal T\|_2^2
 &\le12\sum_sA_s^2\|\mathcal G_s\|_4^2\\
 &\le192\sum_sA_s^2\sum_{t\le s-2}A_t^2
 \le96\mathcal A^4,
\end{align*}
where the double sum satisfies
\[
 \sum_sA_s^2\sum_{t\le s-2}A_t^2
 \le\sum_{t<s}A_s^2A_t^2
 =\tfrac12\left[\left(\sum_sA_s^2\right)^2-\sum_sA_s^4\right]
 \le\tfrac12\mathcal A^4.
\]
Combining~\eqref{eq:resonant} with the preceding estimates gives
\[
 \|W\|_4^2=\|W^2\|_2
 \le\|\mathcal H\|_2+\|2\mathcal T\|_2
 \le(3+4\sqrt6)\mathcal A^2
 \le16\sum_s\|W_s\|_4^2.\qedhere
\]
\end{proof}

\section{Basic facts about function spaces}
\label{app:function-spaces}

This section collects the norm comparisons, embedding theorems, and dyadic estimates used in the main text. All function spaces are periodic and inhomogeneous, with the projections given in Definition~\ref{def:besov}. We use the periodic versions of the following lemmas; for the passage between the Euclidean and periodic function space settings, see \cite[Chapter~9, pp.~264--268]{Tri83}.

\begin{proposition}
\label{appB:block-calculus}
Let $s\in\R$ and $1\le p,q\le\infty$. The definition of the Besov norm is independent of the choice of smooth cutoff. When $p=2$,
\begin{equation}\label{appB:sharp-equivalence}
 \|w\|_{B^s_{2,q}} \asymp |\widehat w(0)| + \bigl\|(2^{sj}\|\Pi_jw\|_2)_{j\ge0}\bigr\|_{\ell^q}.
\end{equation}
For $m=1,2$, differentiation satisfies
\begin{equation}\label{appB:lifting}
 \|w\|_{B^{s+m}_{p,q}} \asymp |\widehat w(0)| + \|D^mw\|_{B^s_{p,q}}.
\end{equation}
Hence, for an absolutely continuous map $h$,
\begin{equation}\label{appB:lift-convention}
 h-\id\in B^1_{p,q} \quad\Longleftrightarrow\quad h'\in B^0_{p,q}.
\end{equation}
The summability exponents satisfy the continuous embedding
\begin{equation}\label{appB:q-monotonicity}
 B^s_{p,q_0} \hookrightarrow B^s_{p,q_1} \qquad (1\le q_0\le q_1\le\infty).
\end{equation}
\end{proposition}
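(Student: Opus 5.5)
We will prove the four assertions of Proposition~\ref{appB:block-calculus} by standard Littlewood--Paley arguments on $\T$, working with the Fourier side throughout.

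\emph{Independence of the cutoff.} Let $\chi,\widetilde\chi$ be two admissible cutoffs, with blocks $\mathsf P_j$ and $\widetilde{\mathsf P}_j$. The multiplier of $\mathsf P_j$ is supported in $\{2^{j-1}\le|k|\le2^{j+1}\}$ for $j\ge1$. Hence $\mathsf P_j=\sum_{|i-j|\le2}\mathsf P_j\widetilde{\mathsf P}_i$, with the obvious modification near $j=0$. It therefore suffices to bound $\mathsf P_j$ on $L^p$ uniformly in $j$. Its convolution kernel $\mathcal K_j$ satisfies $|\mathcal K_j(y)|\le C\min\{2^j,2^{-2j}\dist(y,\Z)^{-3}\}$; this follows from the third-difference computation in the proof of Lemma~\ref{lem:besov-C1-multiplier}. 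Consequently $\sup_j\|\mathcal K_j\|_1<\infty$, and Young's inequality gives $\|\mathsf P_j\widetilde{\mathsf P}_iw\|_p\le C\|\widetilde{\mathsf P}_iw\|_p$ for every $1\le p\le\infty$. Summing with the weights $2^{sj}\asymp2^{si}$ over the five neighbouring indices and taking $\ell^q$ norms yields the equivalence of the two norms.

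\emph{The case $p=2$, i.e.\ \eqref{appB:sharp-equivalence}.} By Plancherel, both $\|\mathsf P_jw\|_2$ and $\|\Pi_jw\|_2$ are $\ell^2$ norms of $\widehat w$ restricted to dyadic annuli, weighted by multipliers with values in $[0,1]$. The support of $\mathsf P_j$ meets only the sharp blocks $\Pi_{j-1},\Pi_j,\Pi_{j+1}$, so $\|\mathsf P_jw\|_2\le\sum_{|i-j|\le1}\|\Pi_iw\|_2$ for $j\ge1$. For the reverse inequality, note that $\sum_j(\chi(2^{-j}k)-\chi(2^{1-j}k))=1$. The multipliers are nonnegative, and at most two of them are nonzero at any $k$. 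Hence on $I_i$ the largest of the relevant multipliers is at least $1/2$, which gives $|\widehat w(k)|\le2\sum_{|j-i|\le2}|\widehat{\mathsf P_jw}(k)|$ there. Thus $\|\Pi_iw\|_2\le2\sum_{|j-i|\le2}\|\mathsf P_jw\|_2$. The mean $\widehat w(0)$ is controlled by $\mathsf P_0$ in one direction; in the other, $\|\mathsf P_0w\|_2\le|\widehat w(0)|+\|\Pi_0w\|_2$, since $\chi$ vanishes for $|k|\ge2$. Finite shifts preserve $\ell^q$ norms up to constants, which proves \eqref{appB:sharp-equivalence}.

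\emph{Differentiation, i.e.\ \eqref{appB:lifting}, and its consequence \eqref{appB:lift-convention}.} For $j\ge1$ we write $\mathsf P_jw=2^{-mj}\mathcal M_j(\mathsf P_jD^mw)$. Here $\mathcal M_j$ is the multiplier with symbol $\phi(2^{-j}k)$, where $\phi(\xi)=(2\pi i\xi)^{-m}\widetilde\psi(\xi)$ and $\widetilde\psi$ is a fattened annular cutoff. The same $L^1$ kernel bound as above gives $\|\mathcal M_j\|_{L^p\to L^p}\le C$. Conversely, $\mathsf P_jD^mw=2^{mj}\mathcal M_j'\mathsf P_jw$ with a uniformly bounded $\mathcal M_j'$. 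For $j=0$, the low-frequency part of $w$ is a trigonometric polynomial of degree at most $2$, so $\|\mathsf P_0w\|_p\le C(|\widehat w(0)|+\sum_{1\le|k|\le2}|\widehat{D^mw}(k)|)$. The remaining coefficients are bounded by $\|\mathsf P_0D^mw\|_p+\|\mathsf P_1D^mw\|_p$. Summing in $\ell^q$ gives \eqref{appB:lifting}. Applying it with $m=1$ to the periodic function $h-\id$, whose derivative is $h'-1$, gives \eqref{appB:lift-convention}: the constant $1$ lies in every $B^0_{p,q}$, and the mean of $h-\id$ is finite.

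\emph{Monotonicity \eqref{appB:q-monotonicity}.} This is immediate from $\|\cdot\|_{\ell^{q_1}}\le\|\cdot\|_{\ell^{q_0}}$.

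The only step requiring real care is the uniform $L^1$ bound on the rescaled kernels, and that bound is already done by the third-difference argument in the proof of Lemma~\ref{lem:besov-C1-multiplier}. That argument applies verbatim to any symbol $\phi(2^{-j}\cdot)$ with $\phi\in C_c^\infty$ supported away from the origin, and this covers all of the multipliers used above.
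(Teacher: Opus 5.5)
Your proof is correct, but it takes a different route from the paper, which gives no proof of Proposition~\ref{appB:block-calculus} at all. The paper refers to Triebel's monograph for cutoff independence, lifting, and monotonicity in $q$ (Sections~2.3.2 and~2.3.8), and to Chapter~9 there for the periodic setting and the sharp-annulus $L^2$ equivalence.

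You instead give a self-contained Fourier-side argument on $\T$. Its only analytic input is a uniform $L^1$ bound for the kernels of the multipliers $\phi(2^{-j}k)$ with $\phi\in C_c^\infty(\R)$. You obtain this bound from the third-difference estimate already used in the proof of Lemma~\ref{lem:besov-C1-multiplier}, and combine it with Young's inequality and, for $p=2$, Plancherel.

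What your route buys:
\begin{itemize}
\item it handles the endpoints $p\in\{1,\infty\}$ on the same footing as $1<p<\infty$;
\item it needs neither a Mikhlin-type multiplier theorem nor the transference between $\R$ and $\T$;
\item it gives explicit constants depending only on $\chi$, $s$ and $m$.
\end{itemize}
The citation costs nothing and covers much more general statements than the paper needs.

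Two small points should be made explicit.
\begin{itemize}
\item The paper does not assume $\chi$ is monotone. So your claim that $\chi(\xi)-\chi(2\xi)\in[0,1]$ needs a one-line check: it follows from $\chi=1$ on $[-1,1]$ and $\chi=0$ outside $[-2,2]$.
\item In the lifting step, record the reverse inequality at $j=0$, namely $|\widehat w(0)|+\|\mathsf P_0D^mw\|_p\le C\|\mathsf P_0w\|_p$. It is immediate, since $\mathsf P_0w$ has frequencies only in $\{|k|\le1\}$ and $\chi(0)=\chi(\pm1)=1$.
\end{itemize}
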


For cutoff independence, monotonicity in the summability exponent, and differentiation properties, see \cite[Sections~2.3.2 and 2.3.8]{Tri83}; \eqref{appB:sharp-equivalence} is the $L^2$ decomposition associated with sharp annular projections, whose periodic version is discussed in \cite[Chapter~9]{Tri83}.

\begin{proposition}
\label{appB:embeddings}
\label{appB:besov-sobolev}
\label{appB:classical-embeddings}
\label{appB:arithmetic-versus-sobolev}
For every $1<p<\infty$,
\begin{equation}\label{appB:sobolev-sandwich}
 B^1_{p,\min\{p,2\}}\hookrightarrow W^{1,p}
 \hookrightarrow B^1_{p,\max\{p,2\}}.
\end{equation}
In particular,
\[
 \begin{array}{ll}
 1<p<2: & W^{1,p}\hookrightarrow B^1_{p,2}, \\
 p=2: & W^{1,2}=B^1_{2,2}, \\
 2<p<\infty: & B^1_{p,2}\hookrightarrow W^{1,p}.
 \end{array}
\]
At order zero, $B^0_{2,1}\hookrightarrow L^2=B^0_{2,2}$. For every integer $m\ge0$,
\begin{equation}\label{appB:critical-continuity}
 B^{m+1/2}_{2,1}\hookrightarrow C^m.
\end{equation}
In particular, $B^2_{2,1}\hookrightarrow C^1$, but
\[
 B^0_{2,1}\not\hookrightarrow C^0,
 \qquad B^2_{2,1}\not\hookrightarrow C^2.
\]
For $0<\sigma<1$ and $m=1,2$, the following identity holds with equivalent norms:
\begin{equation}\label{appB:holder-identification}
 B^{m+\sigma}_{\infty,\infty}=C^{m+\sigma}.
\end{equation}
Membership in the classical spaces above is understood in terms of the corresponding continuous representatives.
\end{proposition}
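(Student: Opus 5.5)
The plan is to prove each item by reducing it to dyadic block estimates for the smooth projections $\mathsf P_j$ of Definition~\ref{def:besov}. Classical Littlewood--Paley theory, in the form of \cite[Sections~2.3.2, 2.3.8, 2.5.6, 2.5.7]{Tri83} and its periodic version \cite[Chapter~9]{Tri83}, will be quoted wherever it is genuinely nontrivial.

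\textbf{Sobolev sandwich.} For~\eqref{appB:sobolev-sandwich} I would use the Littlewood--Paley characterization $W^{1,p}=F^1_{p,2}$ for $1<p<\infty$, quoted from Triebel. The sandwich then reduces to the elementary comparison
\[
B^s_{p,\min\{p,2\}}\hookrightarrow F^s_{p,2}\hookrightarrow B^s_{p,\max\{p,2\}}.
\]
When $p\ge2$, this follows from Minkowski's inequality in $L^{p/2}$ applied to $\sum_j 2^{2sj}|\mathsf P_jw|^2$. When $p\le2$, it follows from the reverse Minkowski inequality $\|(\cdot)\|_{L^p(\ell^2)}\le\|(\cdot)\|_{\ell^p(L^p)}$ and its counterpart. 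The three displayed special cases are read off directly, and $W^{1,2}=B^1_{2,2}$ also follows from Plancherel. The statement $B^0_{2,1}\hookrightarrow L^2=B^0_{2,2}$ is~\eqref{appB:q-monotonicity} combined with Plancherel.

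\textbf{Critical continuity.} For~\eqref{appB:critical-continuity}, each $\mathsf P_jw$ has at most $C2^j$ nonzero Fourier coefficients. Cauchy--Schwarz on the coefficients then gives the Bernstein-type bound $\|\mathsf P_jw\|_\infty\le C2^{j/2}\|\mathsf P_jw\|_2$. Differentiating $i\le m$ times costs a factor $C2^{ij}$, so
\[
\sum_j\|D^i\mathsf P_jw\|_\infty\le C\|w\|_{B^{m+1/2}_{2,1}},
\]
and the series $\sum_j\mathsf P_jw$ converges in $C^m$ to the continuous representative. Specializing to $m=1$ gives $B^2_{2,1}\hookrightarrow B^{3/2}_{2,1}\hookrightarrow C^1$.

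\textbf{Counterexamples and H\"older identification.} For $B^0_{2,1}\not\hookrightarrow C^0$ I would take
\[
w\defeq\sum_{j\ge3}j^{-2}2^{-j}\,\Phi_j,
\]
where $\Phi_j$ is a nonnegative-coefficient Fej\'er-type block with spectrum in $I_j$ and $\Phi_j(0)\asymp 2^j$. Concretely, $\Phi_j$ is the real part of $e_{3\cdot2^{j-1}}\mathsf F_{2^{j-2}}$, so that $\|\Phi_j\|_2\asymp2^{j/2}$. Then $\mathcal B_1(w)\le C\sum_j j^{-2}2^{-j/2}<\infty$. However, this choice gives a bounded function, so the amplitude must be increased; this is the step I expect to need the most care. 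The fix is to take coefficients $c_j$ with $\sum_j c_j2^{j/2}\cdot2^{-j}\cdot2^{j/2}=\sum_j c_j<\infty$ after $L^2$-normalizing, that is, to use $\widetilde\Phi_j=2^{-j/2}\Phi_j$ with $\|\widetilde\Phi_j\|_2\asymp1$ and $\widetilde\Phi_j(0)\asymp2^{j/2}$. With $w=\sum_j j^{-2}\widetilde\Phi_j$, the Fej\'er means at $0$ are bounded below by a multiple of $\sum_{j\le J}j^{-2}2^{j/2}\to\infty$, because the coefficients are nonnegative. Hence $w$ has no bounded, and in particular no continuous, representative. The non-embedding $B^2_{2,1}\not\hookrightarrow C^2$ follows by lifting: the double primitive $u$ of this zero-mean $w$ lies in $B^2_{2,1}$ by~\eqref{appB:lifting}, and $u\in C^2$ would force $w=u''\in C^0$. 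Finally, for~\eqref{appB:holder-identification} I would cite the Zygmund--H\"older characterization \cite[Section~2.5.7]{Tri83}: for noninteger $m+\sigma$, $\sup_j2^{j(m+\sigma)}\|\mathsf P_jw\|_\infty<\infty$ is equivalent to $D^mw\in C^\sigma$. The direction ``$\Leftarrow$'' uses the vanishing moments of the kernel $\mathcal K_j$ against the Taylor polynomial, and ``$\Rightarrow$'' splits $|D^mw(x+t)-D^mw(x)|$ into blocks with $2^j\le t^{-1}$, estimated by the mean value theorem, and blocks with $2^j>t^{-1}$, estimated by the sup norm.
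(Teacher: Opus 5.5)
Your proposal is correct. The paper gives no argument of its own for this proposition; it only refers to Triebel: Sections~2.3.2 and 2.5.6 for the Sobolev sandwich, 2.5.7 for the H\"older identification, and 2.7.1 and Chapter~9 for the critical embedding and its sharpness.

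\textbf{Where you follow the paper.} For \eqref{appB:sobolev-sandwich} and \eqref{appB:holder-identification}, your plan is what those references contain: $W^{1,p}=F^1_{p,2}$ combined with the elementary comparison $B^s_{p,\min\{p,2\}}\hookrightarrow F^s_{p,2}\hookrightarrow B^s_{p,\max\{p,2\}}$, and the H\"older--Zygmund characterization. There you only make the elementary half explicit.

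\textbf{Where you depart from it.} For \eqref{appB:critical-continuity} and the two non-embeddings you argue directly. Counting the $O(2^j)$ nonzero coefficients gives $\|D^i\mathsf P_jw\|_\infty\le C2^{(i+1/2)j}\|\mathsf P_jw\|_2$, which is a self-contained two-line proof of the embedding. The Fej\'er blocks with nonnegative coefficients give an explicit element of $B^0_{2,1}\setminus L^\infty$, and the zero-mean double primitive then yields $B^2_{2,1}\not\hookrightarrow C^2$. The paper's citation is shorter. Your version shows that the loss of $2^{j/2}$ per block is sharp. This is exactly what Lemma~\ref{lem:besov-multiplier} circumvents through the arithmetic bound $\sum_{k\in I_s}|m_k|^2\le C_c$, a contrast the paper only states in words.

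\textbf{Presentation fixes.}
\begin{enumerate}
\item Delete the first candidate $\sum_j j^{-2}2^{-j}\Phi_j$, which converges uniformly and is continuous. Define directly $w\defeq\sum_{j\ge3}j^{-2}2^{-j/2}\Phi_j$. Then $\Pi_jw=j^{-2}2^{-j/2}\Phi_j$, so $\mathcal B_1(w)\asymp\sum_jj^{-2}<\infty$. For $M=2^{J+2}$ the Fej\'er weights are at least $1/2$ on all blocks $j\le J$, so the Fej\'er mean of $w$ at $0$ is at least $c\sum_{j\le J}j^{-2}2^{j/2}$. This is impossible if $w\in L^\infty$.
\item For $p\le2$, the inequality $\|\cdot\|_{L^p(\ell^2)}\le\|\cdot\|_{\ell^p(L^p)}$ is just the pointwise inclusion $\ell^p\subset\ell^2$. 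It is the reverse Minkowski inequality in $L^{p/2}$, with $p/2\le1$, that gives the other half $F^s_{p,2}\hookrightarrow B^s_{p,2}$.
\item On $\T$, Taylor polynomials are not periodic, so the moment argument in the H\"older identification needs adjusting. For $j\ge1$, write $\mathsf P_jw=2^{-jm}\widetilde{\mathsf P}_jD^mw$, where $\widetilde{\mathsf P}_j$ has symbol $\psi(\xi)(2\pi i\xi)^{-m}$ evaluated at $\xi=2^{-j}k$, with $\psi$ as in the proof of Lemma~\ref{lem:besov-C1-multiplier}. Then use only that the kernel of $\widetilde{\mathsf P}_j$ has mean zero and $\sigma$-th moment $\int_\T|\cdot|\,\dist(y,\Z)^\sigma\,dy=O(2^{-j\sigma})$, exactly as in that proof.
\end{enumerate}
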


For the Sobolev space comparisons, see \cite[Sections~2.3.2 and 2.5.6]{Tri83}; for the identification with H\"older spaces, see \cite[Section~2.5.7]{Tri83}; and for the critical continuity embeddings and their sharpness, see \cite[Section~2.7.1 and Chapter~9]{Tri83}. These results distinguish Sobolev embeddings from the arithmetic continuity estimate in Lemma~\ref{lem:besov-multiplier}.

\begin{proposition}
\label{appB:dyadic-step}
Let $1<p<\infty$ and $w\in L^p(\T)$. For $n\in\N_0$, let $w_n$ be the piecewise constant function that, on each interval $[k2^{-n},(k+1)2^{-n})$ ($0\le k<2^n$), equals the average of $w$, and, for $n\ge1$, write $d_n\defeq w_n-w_{n-1}$. If
\[
 \sum_{n\ge1}\|d_n\|_p^2<\infty,
\]
then $w\in B^0_{p,2}(\T)$ and
\begin{equation}\label{appB:eq:dyadic-Besov}
 \|w\|_{B^0_{p,2}}
 \le C_p\left(\|w\|_p+
        \Bigl(\sum_{n\ge1}\|d_n\|_p^2\Bigr)^{1/2}\right).
\end{equation}
\end{proposition}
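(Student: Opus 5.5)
The plan is to compare $w$ with the dyadic martingale expansion $w = w_0 + \sum_{n\ge1} d_n$. This series converges in $L^p$ by the martingale convergence theorem, since $w_n$ is the dyadic conditional expectation of $w\in L^p$. After the expansion, each smooth Littlewood--Paley block $\mathsf P_j d_n$ is estimated separately, with a bound that is geometrically off-diagonal in $|j-n|$. The constant $w_0=\widehat w(0)$ is controlled by $\|w\|_p$, and I expect the Besov norm to be controlled by discrete convolution in $\ell^2$.

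\textbf{Step 1 (low frequencies, $j\le n$).} Each $d_n$ is constant on intervals of length $2^{-n}$ and has mean zero on each parent interval of length $2^{1-n}$. I will write $d_n = D\Phi_n$, where $\Phi_n(x)=\int_0^x d_n$. This primitive is supported in the parents, vanishes at their endpoints, and satisfies $\|\Phi_n\|_p\le 2^{1-n}\|d_n\|_p$. Since $\mathsf P_j D$ has an $L^p\to L^p$ norm $O(2^j)$ (by the kernel $L^1$ bound, the same computation as in Lemma~\ref{lem:besov-C1-multiplier}), this gives $\|\mathsf P_j d_n\|_p\le C2^{j-n}\|d_n\|_p$.

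\textbf{Step 2 (high frequencies, $j>n$).} Here $d_n$ is a step function with jumps only at the points $k2^{-n}$. I will expand $\mathsf P_j d_n$ through the kernel $\mathcal K_j$. The moment bound \eqref{eq:kernel-first-moment}, together with the decay $|\mathcal K_j(y)|\le C\min\{2^j,2^{-2j}t^{-3}\}$ and the vanishing mean of $\mathcal K_j$, implies the following: at distance $t$ from the nearest jump, $|\mathsf P_jd_n(x)|$ is bounded by the adjacent jump sizes times $C\min\{1,(2^jt)^{-2}\}$. Integrating the $p$-th power over the cells gives $\|\mathsf P_j d_n\|_p\le C2^{-(j-n)/p}\|d_n\|_p$. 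The key point is that the jump sizes are bounded by the values of $d_n$ on the adjacent cells, and the fraction of each cell where the bound is nontrivial is of size $2^{n-j}$.

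\textbf{Step 3 (summation).} With $\epsilon_p\defeq\min\{1,1/p\}>0$, Steps 1 and 2 give
\[
\|\mathsf P_j(w-w_0)\|_p\le C\sum_{n\ge1}2^{-\epsilon_p|j-n|}\|d_n\|_p,
\]
where the series is handled by the $L^p$ convergence of the martingale expansion. Young's inequality for the discrete convolution on $\ell^2$ then yields $\big(\sum_j\|\mathsf P_j(w-w_0)\|_p^2\big)^{1/2}\le C_p\big(\sum_n\|d_n\|_p^2\big)^{1/2}$. Adding the constant term, which is bounded by $\|w_0\|_p\le\|w\|_p$, gives \eqref{appB:eq:dyadic-Besov}.

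\textbf{Main obstacle.} I expect Step 2 to be the hardest, because it needs a careful localization of the kernel near the jump points. My first attempt will be to write $d_n$ as a sum of its jumps times Heaviside functions at the $k2^{-n}$ and apply $\mathsf P_j$ to each Heaviside function. The tail $\int_{|y|>t}|\mathcal K_j|\le C(2^jt)^{-2}$ controls each term, and Hölder's inequality over neighbouring cells then completes the bound.
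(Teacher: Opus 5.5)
Your proposal is correct, and it takes a different route from the paper. The paper gives no argument of its own. It identifies the statement with the case $s=0$, $q=2$ of the Haar-system characterization of $B^s_{p,q}$ for $1/p-1<s<1/p$ in \cite{GSU23}, transferred to $\T$ via \cite[Chapter~9]{Tri83}.

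You instead prove directly, by an almost-orthogonality argument, the one inequality that is actually used: the synthesis direction, in which the martingale differences control the Besov norm. The key ingredients are the off-diagonal bound $\|\mathsf P_jd_n\|_p\le C2^{-|j-n|/p}\|d_n\|_p$ and Young's inequality on $\ell^2$.

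What your route buys:
\begin{enumerate}
\item It is self-contained and works on $\T$ from the start, so no localization from $\R$ is needed.
\item It reuses the kernel bounds already proved in Lemma~\ref{lem:besov-C1-multiplier}.
\item It shows what the one-sided bound really requires. Step~1 uses only the first-order cancellation of $d_n$, and Step~2 uses only the $1/p$-smoothness of step functions. Together these would allow any $-1<s<1/p$ here, whereas the cited range $1/p-1<s<1/p$ is tailored to the two-sided equivalence.
\end{enumerate}
The citation, in exchange, supplies that two-sided norm equivalence at no cost.

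One claim in Step~2 needs correcting, although the conclusion survives. As stated, the pointwise bound by the \emph{adjacent} jump sizes is false. Since $\mathcal K_j$ is not compactly supported, $\mathsf P_jd_n(x)$ also sees distant jumps: take $d_n$ vanishing on the cell of $x$ and on its two neighbours but not elsewhere.

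The fix is to write
\[
\mathsf P_jd_n(x)=\int_{-1/2}^{1/2}\mathcal K_j(y)\bigl(d_n(x-y)-d_n(x)\bigr)\,dy
\]
and telescope over the jumps $J_k$ at $x_k=k2^{-n}$ lying on the arc between $x-y$ and $x$. This also sidesteps the fact that single Heaviside functions are not periodic. Your tail estimate then gives
\[
 |\mathsf P_jd_n(x)|\le C\sum_k|J_k|\min\bigl\{1,(2^j\|x-x_k\|)^{-2}\bigr\}.
\]
\begin{enumerate}
\item The pieces with $\|x-x_k\|<2^{-n-1}$ have disjoint supports and give your bound $C2^{-(j-n)/p}\|d_n\|_p$.
\item For $m\ge1$, the pieces with $\|x-x_k\|\in[m2^{-n-1},(m+1)2^{-n-1})$ overlap at most twice and carry weight at most $4(2^{j-n}m)^{-2}$.
\item Using $\sum_k|J_k|^p\le2^p2^n\|d_n\|_p^p$ and Minkowski's inequality over $m$, these far pieces contribute at most $C2^{-2(j-n)}\|d_n\|_p\le C2^{-(j-n)/p}\|d_n\|_p$.
\end{enumerate}
So Step~2 holds exactly as you claimed.
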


This is the special case $s=0, q=2$ of the Haar frame characterization of Besov spaces in the range $1/p-1<s<1/p$; see \cite{GSU23}, pp.~2--3, equations~(1.5) and~(1.7), together with the periodic localization in \cite[Chapter~9]{Tri83}. Since $1<p<\infty$, the exponent $s=0$ lies strictly inside this range. The criterion characterizes the standard Besov space and is applied in Proposition~\ref{prop:conjugacy-regularity} to $w=h'$ in the conjugacy coordinate.

\section{Katznelson--Ornstein estimates}
\label{app:KO}

The following results are used in the proofs of Lemma~\ref{lem:KO-dyadic} and Theorem~\ref{thm:rigidity}, and in the comparison in Remark~\ref{rem:rigidity-methods}. The notation is fixed in Sections~\ref{subsec:conjugacy-regularity} and~\ref{subsec:global-linearization}.

\begin{proposition}
\label{appC:KO-square-summability}
Let $F\in\mathcal F_\alpha^2$, where $\alpha$ is irrational, and let $K_n^0$ be defined by~\eqref{eq:KO-return-distortion}. Then
\begin{equation}\label{eq:KO-square-summability}
 \sum_{n\ge1}(K_n^0)^2<\infty.
\end{equation}
\end{proposition}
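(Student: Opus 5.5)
The plan is to follow the martingale strategy of Katznelson and Ornstein, with $g\defeq F''/F'=D\ln F'$ in the role of the square-integrable "density". Since $F\in\mathcal F_\alpha^2$, $g$ is continuous with $\int_\T g=0$, so in particular $g\in L^2(\T)$. I would work with the dynamical partitions $P_n(F)$ generated by $\{F^k(0):0\le k<q_n\}$, already used in Section~\ref{sec:rigidity}. These partitions are nested: every endpoint of $P_n(F)$ is also an endpoint of $P_{n+1}(F)$, because $q_n<q_{n+1}$. Hence the averages $\mathsf G_n=E_ng$ of~\eqref{eq:KO-averaging} form a martingale, and the increments $\mathsf g_n=\mathsf G_n-\mathsf G_{n-1}$ are pairwise orthogonal in $L^2$. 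Orthogonality gives
\[
\sum_{n\ge1}\|\mathsf g_n\|_2^2\le\|g\|_2^2<\infty .
\]
The goal is then a linear bound
\[
K_n^0\le C\sum_{j\ge1}c_{nj}\|\mathsf g_j\|_2+\mathsf r_n
\]
with a Schur-bounded kernel $(c_{nj})$ and a square-summable remainder $(\mathsf r_n)$. Schur's test then converts $\ell^2$ of $(\|\mathsf g_j\|_2)$ into $\ell^2$ of $(K_n^0)$.

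To obtain the bound, I would first use the real a priori bounds that hold for any irrational $\alpha$ and $F\in C^2$, which do not need constant type. These are Denjoy's bounded distortion $\|\ln DF^{q_n}\|_\infty\le \operatorname{Var}\ln F'$, and the fact that each atom of $P_{n+2}(F)$ has length at least a fixed fraction of its parent atom in $P_n(F)$. Consequently $\operatorname{mesh}P_n(F)\le C\lambda^n$ for some $\lambda<1$, and $\mathsf G_n\to g$ uniformly.

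Next, for $x$ and $y=F^{q_n}(x)$ (or a nearby point), I would write $\ln DF^{q_n}(x)$ as a sum of integrals of $g$ over the iterates of a short interval. Since $\int_\T g\,d\mu=0$ for the invariant measure, a Denjoy--Koksma-type telescoping reduces $\ln DF^{q_n}$ to integrals of $g$ along the family $\{F^k(I_n)\}_{0\le k<q_n}$. Here $I_n$ is the fundamental interval, and these images almost tile $\T$. I would then split $g=\sum_{j\le n-1}\mathsf g_j+(g-\mathsf G_{n-1})$.

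For the coarse pieces $\mathsf g_j$ with $j<n$, each is constant on atoms of $P_j$ and has zero mean on atoms of $P_{j-1}$. The orbit family of $I_n$ distributes across these atoms up to boundary effects at the $O(1)$ atoms not fully covered, and the bounded geometry makes these effects decay geometrically in $n-j$. This yields coefficients $c_{nj}\lesssim n\eta^{(n-j)/2}$ times $\|\mathsf g_j\|_2$, after converting $L^\infty$ on an atom to $L^2$ via the atom lengths.

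For the fine part $j\ge n$, I would use Cauchy--Schwarz on the union of orbit intervals. Their total length is $O(1)$, and the lower bounds on atom lengths give $c_{nj}\lesssim\eta_1^{j-n}$. The nonlinear correction comes from replacing $\ln DF^k$ on $F^k(I_n)$ by its averaged value. It is controlled by $\widetilde K^1$-type distortions times the small lengths $|F^k I_n|$, which I would absorb into $\mathsf r_n$ or into a small multiple of earlier $K_m^0$ that can be moved to the left-hand side.

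The main obstacle is the coarse-scale cancellation estimate. One must show that the $q_n$-orbit of a single fundamental interval samples each atom of $P_j(F)$, $j<n$, in a way whose defect against the zero-mean condition is geometrically small in $n-j$, uniformly without any Diophantine assumption. I would attack this through the combinatorics of the continued fraction: $P_n$ consists of $q_{n-1}$ long and $q_n$ short atoms, and the orbit of $I_n$ visits each atom of $P_{n-1}$ exactly once except near two exceptional atoms. I would iterate this description down the levels, using the factor-$\tfrac12$ decay $q_{n+2}\ge2q_n$ together with the real bounds to get the factor $\eta^{(n-j)/2}$. Once the kernel bound $\sup_j\sum_n c_{nj}+\sup_n\sum_j c_{nj}<\infty$ is in hand, Schur's test gives~\eqref{eq:KO-square-summability}.
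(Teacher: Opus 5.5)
Your architecture matches the Katznelson--Ornstein scheme that the paper cites: martingale increments $\mathsf g_j$ of $g=D\ln DF$ along the nested partitions $P_j(F)$, a linear bound with a Schur-type kernel, then $\ell^2$ summation. Using $L^2$ orthogonality instead of $L^p$, $p<2$, is harmless for $C^2$ maps. The gap is at the step you yourself flag as the main obstacle, and the mechanism you propose there cannot work.

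\textbf{The coarse-scale step.} For $j<n$ and $U=\bigcup_kF^k(I)$, we have $\int_U\mathsf g_j=\sum_{A\in P_j(F)}\mathsf g_j|_A\,|U\cap A|$. The zero mean of $\mathsf g_j$ on atoms of $P_{j-1}(F)$ gives cancellation only insofar as the Lebesgue density $|U\cap A|/|A|$ is nearly constant over the children $A$ of each parent. Combinatorics controls how many pieces $F^k(I)$ fall in each atom, and that count is proportional to $\mu(A)$, not $|A|$. It does not control their lengths, which vary with the nonlinear distortion of the iterates. Bounded geometry makes the resulting defect $O(1)$, not geometrically small in $n-j$. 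Controlling this defect is what the feedback term $C\varepsilon\sum_{n/2<m<n}\eta^{n-m}\widetilde K_m^1$ in Proposition~\ref{appC:KO-recursion} accounts for. That term can be moved to the left only because its coefficient is small, and the smallness is a genuinely $C^2$ input: $\sup_I\int_I|D\ln DF^{q_n}|\to0$ over $q_n$-small intervals (Proposition~\ref{appC:KO-smallness}), used at a fixed large return order. Your plan has no source of smallness. ``$\widetilde K^1$-type distortions times the lengths $|F^kI_n|$'' only gives $O(\widetilde K)$, since those lengths sum to $O(1)$, so nothing can be absorbed. Note also that the quantity appearing in the feedback, and on the left, is the short-interval distortion $\widetilde K_m^1$. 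It is converted back only at the end, via $K_n^0\le2\widetilde K_n^1$.

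\textbf{Supporting claims that are wrong.}
(i) ``Each atom of $P_{n+2}(F)$ has length at least a fixed fraction of its parent in $P_n(F)$'' is false without bounded type. An atom of $P_n(F)$ contains on average $q_{n+2}/q_n\ge\mathfrak a_{n+1}\mathfrak a_{n+2}$ atoms of $P_{n+2}(F)$, and the proposition is stated for every irrational $\alpha$. Only upper (mesh) bounds and comparability of adjacent atoms are available. So neither your fine-scale weights nor your $L^\infty$-to-$L^2$ conversion on atoms may use lower bounds on atom lengths.
(ii) In the fine range $j\ge n$, Cauchy--Schwarz over a set of total length $O(1)$ gives no decay. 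The decay comes from the zero mean of $\mathsf g_j$ on atoms of $P_{j-1}(F)$. This cancellation leaves only the $O(q_n)$ atoms of $P_{j-1}(F)$ containing endpoints of the pieces $F^k(I)$. Their total measure decays geometrically in $j-n$ by the upper bounds alone, and H\"older's inequality then yields the weight $\eta_1^{j-n}$.
(iii) Your coarse majorant $c_{nj}\lesssim n\eta^{(n-j)/2}$ fails Schur's test. Its row sums over $j<n$ are $\asymp n$ and its column sums over $n>j$ are $\asymp j$, so for instance $(n\|\mathsf g_{n-1}\|_2)_n$ need not be square summable. In the Katznelson--Ornstein kernel the factor $n$ appears only for $j<n/2$, where $\eta^{n/2}$ beats it. For $n/2\le j\le n$ the weight is $\eta_1^{n-j}$, with no factor $n$.
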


This result is \cite[Theorem~3.1, p.~685; proof on pp.~686--689]{KO89}. Its assumptions that $\ln DF$ is absolutely continuous and that there exists $p>1$ such that $D\ln DF\in L^p$ hold automatically for every $F\in\mathcal F_\alpha^2$.

\begin{proposition}
\label{appC:KO-interval-ratios}
Let $F\in\mathcal F_\alpha^2$, assume that the partial quotients of $\alpha$ are uniformly bounded by $A$, and let $h$ satisfy~\eqref{eq:forward-conjugacy}. Choose $n_*\ge1$ such that $2^{1-n_*}<\Vert{}q_1\alpha\Vert{}$. For $n\ge n_*$, define
\[ \ell(n)\defeq \min\{m\ge1:\Vert{}q_m\alpha\Vert{}<2^{1-n}\}. \]
Then $\ell(n)\ge2$. There exist constants $C>0$ and $0<\theta<1$ such that, for every $n\ge n_*$ and $x\in\R$,
\begin{equation}\label{eq:appC-KO-interval-ratio}
 \left|\frac{h(x+2^{-n})-h(x)}{h(x)-h(x-2^{-n})}-1\right|
 \le C\sum_{m\ge\ell(n)}\theta^{m-\ell(n)}K_m^0.
\end{equation}
For each integer $m$, the conditions $\ell(n)=m$ and $n\ge n_*$ restrict the number of indices $n$ to at most $L_A\defeq3+\lfloor\log_2(A+1)\rfloor$. Moreover, $h$ and $h^{-1}$ are absolutely continuous and satisfy
\[ h',\ (h^{-1})'\in L^2(\T). \]
\end{proposition}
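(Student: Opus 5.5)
The plan is to prove \eqref{eq:appC-KO-interval-ratio} first. The absolute continuity and $L^2$ assertions then follow from it by the dyadic martingale argument already used in Proposition~\ref{prop:conjugacy-regularity}. The counting statements are purely arithmetic, so I would dispose of them first.

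\textbf{Arithmetic part.} Since $2^{1-n}\le2^{1-n_*}<\|q_1\alpha\|$ for $n\ge n_*$, the index $m=1$ is never admissible, so $\ell(n)\ge2$. If $\ell(n)=m$, then $\|q_m\alpha\|<2^{1-n}\le\|q_{m-1}\alpha\|$. For partial quotients bounded by $A$ one has $\|q_{m-1}\alpha\|/\|q_m\alpha\|\le\mathfrak a_{m+1}+2\le 2(A+1)$. The admissible $n$ therefore lie in a window of $2^{-n}$ of multiplicative width at most $2(A+1)$, which contains at most $L_A$ integers.

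\textbf{Interval-ratio estimate.} Fix $x$ and set $\delta\defeq2^{-n}$. Expand $\delta$ in the Ostrowski system,
\[
\delta=\sum_{m\ge \ell(n)-1}\epsilon_m b_m\|q_m\alpha\|,\qquad 0\le b_m\le A,\ \epsilon_m\in\{\pm1\},
\]
and let $k_M\defeq\sum_{\ell(n)-1\le m\le M}b_mq_m$ with the appropriate signs, so that $x+k_M\alpha\to x+\delta$ modulo $1$ as $M\to\infty$. By the conjugacy, $h(y+k_M\alpha)=F^{k_M}(h(y))$. Hence $F^{k_M}$ maps $h([x-\delta,x])$ onto $h([x-\delta,x]+k_M\alpha)$, an interval differing from $h([x,x+\delta])$ at its two endpoints by $h$-images of arcs of length $\le\sum_{m>M}A\|q_m\alpha\|$.

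Factor $F^{k_M}$ into $b_m$ blocks $F^{\pm q_m}$. On each block the logarithmic derivative oscillates by at most $2K_m^0$, so the distortion of $F^{k_M}$ on the interval is at most $\exp\bigl(2A\sum_{m\ge\ell(n)-1}K_m^0\bigr)$.

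To obtain the geometric weights $\theta^{m-\ell(n)}$, I would not use this crude bound directly. Instead I would compare the \emph{lengths} of $h$-images through the bounded geometry of the dynamical partitions $P_m(F)$, which holds for $C^2$ maps with constants depending only on $A$ (Denjoy--Koksma/Świątek type). The block of $F^{q_m}$ moves the interval by roughly $\|q_m\alpha\|$. The deviation it causes in the ratio is therefore $K_m^0$ times the relative size of a level-$m$ atom inside a level-$\ell(n)$ atom, which is $\le C\theta^{m-\ell(n)}$ by bounded geometry. Summing over $m$ and letting $M\to\infty$, using continuity of $h$, gives \eqref{eq:appC-KO-interval-ratio}.

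\textbf{Regularity of $h$ and $h^{-1}$.} Combine \eqref{eq:appC-KO-interval-ratio} with Proposition~\ref{appC:KO-square-summability}. Exactly as in the proof of Lemma~\ref{lem:KO-dyadic}, this gives $\beta_n\in\ell^2$. The dyadic averages $w_n$ of $h$ then satisfy
\[
\sup_n\int_\T w_n^r\,dx<\infty\qquad(r=2,\ r=-1),
\]
by the product bound \eqref{eq:dyadic-moment-bound}; that argument uses only the ratio bound and no a priori absolute continuity. A uniformly $L^2$-bounded dyadic martingale converges in $L^2$ to some $v$ with $w_n=E[v\mid\mathcal F_n]$. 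Hence $h(x)=\int_0^xv$, so $h$ is absolutely continuous with $h'=v\in L^2$. The negative moment gives $h'>0$ almost everywhere. Finally, the change of variables
\[
\int_\T\bigl((h^{-1})'\bigr)^2\,dy=\int_\T (h')^{-1}\,dx<\infty
\]
gives $(h^{-1})'\in L^2$ and the absolute continuity of $h^{-1}$.

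\textbf{Main obstacle.} The hard step is the bounded-geometry bookkeeping: controlling the signed Ostrowski compositions, in which $F^{+q_m}$ and $F^{-q_m}$ alternate. The point is to show that each block's distortion enters only through the relative atom length $\theta^{m-\ell(n)}$, and not with an accumulated factor $A$ at every level. I would first try to organise the argument as a telescoping product of ratios of consecutive partition atoms, mirroring the proof in \cite{KO89}.
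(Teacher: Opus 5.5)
You handle the arithmetic part correctly. Your derivation of the regularity assertions from \eqref{eq:appC-KO-interval-ratio} is also sound. The $\ell^2$ bound on $(\beta_n)$ and the moment bounds \eqref{eq:dyadic-moment-bound} use only the ratio estimate, and $L^2$ martingale convergence then gives $h(x)=\int_0^x v$ without presupposing absolute continuity, so there is no circularity with Lemma~\ref{lem:KO-dyadic}. For comparison, the paper proves none of this and imports all three assertions from \cite{KO89}.

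The genuine gap is the geometric weighting, which is the whole content of \eqref{eq:appC-KO-interval-ratio}. For a $C^2$ map only $\sum_m(K_m^0)^2<\infty$ is available, so the unweighted Ostrowski bound $A\sum_{m\ge\ell(n)}K_m^0$ may be infinite. Your justification, that the level-$m$ block causes a deviation of $K_m^0$ times the relative size of a level-$m$ atom, does not follow from the ingredients you list. When $F^{\pm q_m}$ acts on the current interval $I_i=h([a,b])$ with $b-a=\delta$, the relative length change is $\bigl(|h([b,b+\epsilon])|-|h([a,a+\epsilon])|\bigr)/|I_i|$ with $\epsilon=\|q_m\alpha\|$. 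Bounded geometry bounds this by $C\theta^{m-\ell(n)}$, and $\|\ln DF^{q_m}\|_\infty$ bounds it by $CK_m^0$, but nothing gives the product. The two end arcs lie a distance $\delta$ apart, so the smallness of their length difference is governed by the coarse distortion $K^0_{\ell(n)}$ and by adjacent-interval ratios at finer scales, not by $K_m^0$. No per-block estimate of the kind you describe can close the argument.

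What is missing is a recursion across scales. Let $\ell=\ell(n)$. Since $\delta\le\frac12\|q_{\ell-1}\alpha\|$, the greedy expansion of $\delta$ in the lengths $\|q_m\alpha\|$ starts at level $\ge\ell$, not $\ell-1$; starting at $\ell-1$ would put a term $K^0_{\ell-1}$ on the right that the statement does not allow. Let $s=b\|q_k\alpha\|$ be its first nonzero term, with $k\ge\ell$ and $1\le b\le A$, and set $r=\delta-s$. Then $F^{\pm bq_k}(h([x-r,x]))=h([x+s-r,x+s])$ is adjacent to $h([x+s,x+\delta])$ and has the same rotation length $r<\|q_k\alpha\|$. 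Hence, by the mean value theorem,
\[
\varrho_\delta(x)=DF^{\pm bq_k}(\xi)+w\bigl(DF^{\pm bq_k}(\xi')\,\varrho_r(x+s)-1\bigr),\qquad w\defeq\frac{|h([x-r,x])|}{|h([x-\delta,x])|},
\]
where $\varrho_t(y)$ is the ratio of the $h$-lengths of $[y,y+t]$ and $[y-t,y]$. Bounded geometry gives $w\le\min\{\lambda,C\theta^{k'-\ell}\}$ with $\lambda<1$, where $k'>k$ is the level of $r$; its constants depend on $F$, not only on $A$. Iterating this identity yields $C\sum_{m\ge\ell}\theta^{m-\ell}K_m^0$. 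The remainder is controlled by a Denjoy a priori bound on all adjacent ratios, and the constants are absorbed by enlarging $\theta$ slightly. Until such a multiscale argument is carried out, the central estimate remains unproved, and so does the absolute continuity you derive from it. The telescoping over nested atoms that you allude to at the end would be an equivalent route.
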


The interval estimate~\eqref{eq:appC-KO-interval-ratio} follows from the proof of \cite[Theorem~2.3, pp.~684--685, equations~2.2--2.3]{KO89} in the constant-type case; the finite multiplicity of $\ell(n)$ is given in \cite[p.~685]{KO89}. For absolute continuity and $L^2$ regularity of the derivatives, see \cite[Lemma~2.2, pp.~683--684; Theorem~2.3, p.~684; Theorem~3.1, p.~685]{KO89}.

\begin{proposition}
\label{appC:KO-smallness}
Let $F\in\mathcal F_\alpha^2$, where $\alpha$ is irrational, and let $K_n^0$ and $\widetilde K_n^1$ be given by~\eqref{eq:KO-return-distortion} and~\eqref{eq:KO-short-distortion}, respectively. For every $n\ge2$,
\begin{equation}\label{appC:KO-return-comparison}
 K_n^0\le2\widetilde K_n^1,
 \qquad \lim_{n\to\infty}\widetilde K_n^1=0,
\end{equation}
and, over all $q_n$-small intervals $I$,
\begin{equation}\label{appC:KO-return-smallness}
 \lim_{n\to\infty}\sup_I
 \int_I|D\ln DF^{q_n}(x)|\,dx=0.
\end{equation}
\end{proposition}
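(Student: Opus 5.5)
The three assertions would be proved in the order $\lim\widetilde K_n^1=0$, then~\eqref{appC:KO-return-smallness}, then the comparison $K_n^0\le2\widetilde K_n^1$, using only $g\defeq F''/F'=D\ln DF\in C^0(\T)$, $\int_\T g=0$, and Denjoy's theorem. Denjoy's theorem gives bounded distortion of $DF^k$ on $q_n$-small intervals for $0\le k\le q_n$, and $\operatorname{mesh}P_n(F)\to0$. For a $q_n$-small $I=(x,y)$ and $0\le k\le q_n$, the chain rule gives
\[
 \ln DF^k(y)-\ln DF^k(x)=\sum_{j=0}^{k-1}\bigl(\ln DF(F^jy)-\ln DF(F^jx)\bigr)
 =\int_I\sum_{j=0}^{k-1}g(F^jt)\,DF^j(t)\,dt .
\]
Both limit statements therefore reduce to showing that the integral on the right is uniformly small when $k=q_n$. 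Since the intervals $F^jI$ are pairwise disjoint, the crude bound by $\int_{\bigcup_jF^jI}|g|$ is only $O(1)$, so some cancellation from $\int_\T g=0$ must be used.

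\textbf{Step 1 (smallness via cancellation).} Fix $\epsilon>0$. Choose $n$ large so that $\omega_g(\operatorname{mesh}P_{n-1}(F))<\epsilon$; for large $n$ the images $F^jI$ have diameter at most this mesh. Replace $g(F^jt)$ by its value at a fixed point $z_j\in F^jI$, at cost $\epsilon\sum_j|F^jI|\le\epsilon$. What remains is the Riemann-type sum $\sum_{j<q_n}g(z_j)|F^jI|$, weighted by bounded-distortion factors $DF^j(t)|I|/|F^jI|=1+o(1)$. For a $q_n$-small interval of maximal type, for instance $I=(z,F^{q_{n-1}}z)$, the images $F^jI$ for $0\le j<q_n$, together with the images of an adjacent $(z,F^{q_n}z)$-type interval, tile the circle. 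The sum is then a Riemann sum for $\int_\T g=0$ with mesh tending to zero, hence it is $o(1)$; general $q_n$-small intervals are subintervals of such domains, so the same argument applies with the same error. This yields~\eqref{appC:KO-return-smallness}, and taking absolute values inside the integral over $I$ also gives it in the form stated. Applying the same bound to $k<q_n$, with the partial sums of $g$ along the tiling and the continuity of $g$, gives $\widetilde K_n^1\to0$.

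\textbf{Step 2 (the comparison).} The displacement $d=F^{q_n}-\id-p_n$ is periodic, so $d'=DF^{q_n}-1$ vanishes at some point $x_0$, that is, $\ln DF^{q_n}(x_0)=0$. Hence $K_n^0\le\sup_x|\ln DF^{q_n}(x)-\ln DF^{q_n}(x_0)|$. For points $x,y$ joined by a $q_n$-small interval $I=(x,y)$, write $DF^{q_n}=(DF^{q_n-1}\circ F)\,DF$. Then
\[
 |\ln DF^{q_n}(y)-\ln DF^{q_n}(x)|\le\bigl|\ln DF^{q_n-1}(Fy)-\ln DF^{q_n-1}(Fx)\bigr|+\bigl|\ln DF(y)-\ln DF(x)\bigr|\le2\widetilde K_n^1 .
\]
Here the first term uses that $F(I)$ is again $q_n$-small with $k=q_n-1$, and the second uses $I$ itself with $k=1$; this is where the factor $2$ comes from.

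\textbf{Main obstacle.} The real difficulty is in Step 2: it must be arranged that every $x$ is joined to a zero $x_0$ of $d'$ by a single $q_n$-small interval. A direct mean-value argument does not suffice, since $\int_{[x,F^{q_n}x]}DF^{q_n}=|F^{q_n}[x,F^{q_n}x]|\ne|[x,F^{q_n}x]|$ in general. The first thing to try is to exploit the commutation $F^{q_{n-1}}F^{q_n}=F^{q_n}F^{q_{n-1}}$ to show that $d'$ changes sign on every arc $(x,F^{q_{n-1}}x)$. This arc is $q_n$-small for $n\ge2$, which is why the proposition is stated only for $n\ge2$. The sign change would follow because $d$ restricted to consecutive such arcs cannot be strictly monotone on all of them without contradicting periodicity of $d$ under the combinatorics of the first-return map. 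Failing that, one can instead bound the oscillation along a chain of $q_n$-small intervals and use Step 1 to absorb the extra terms. That route would yield the weaker form $K_n^0\le C\widetilde K_n^1$, which is still sufficient for the summability argument in Section~\ref{sec:rigidity}.
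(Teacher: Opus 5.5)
\textbf{Step 1 yields no smallness.} For $I=(z,F^{q_{n-1}}z)$, the images $F^jI$, $0\le j<q_n$, cover only $E\defeq\bigcup_{j<q_n}F^jI$. The complement $E'\defeq\bigcup_{j<q_{n-1}}F^jI'$, where $I'=(F^{q_n}z,z)$, is not in your sum, and its measure does not tend to $0$: already for $R_\alpha$ it equals $q_{n-1}\|q_n\alpha\|$, which is bounded below for constant type. So $\sum_j g(z_j)|F^jI|$ is a Riemann sum for $\int_E g$, not for $\int_\T g$.

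Moreover, the cancellation you invoke carries no information. Since $\sum_{j<k}\int_{F^jI}g=\ln DF^k(F^{q_{n-1}}z)-\ln DF^k(z)$ exactly, the relation $\int_Eg=-\int_{E'}g$ coming from $\int_\T g=0$ reads
\[
 \ln DF^{q_n}(F^{q_{n-1}}z)-\ln DF^{q_n}(z)=\ln DF^{q_{n-1}}(F^{q_n}z)-\ln DF^{q_{n-1}}(z).
\]
This is just the derivative of $F^{q_n}F^{q_{n-1}}=F^{q_{n-1}}F^{q_n}$ at $z$, and it forces neither side to be small. There are further problems with the step:
\begin{enumerate}
\item For subintervals of $I$, and for $k<q_n$, there is no tiling at all.
\item The weights $DF^j(t)|I|/|F^jI|=1+o(1)$ are the conclusion itself. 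They are also unnecessary, since $\int_Ig(F^jt)DF^j(t)\,dt=\int_{F^jI}g$.
\item A bound on $|\int_ID\ln DF^{q_n}|$ gives no bound on $\int_I|D\ln DF^{q_n}|$, so ``taking absolute values inside'' runs the wrong way.
\end{enumerate}
Both limits are genuinely deeper $C^2$ facts. The paper does not reprove them; it imports them from Katznelson--Ornstein's differentiability paper. A self-contained proof needs an input of a different kind, for instance their averaging over nested orbit partitions, or cross-ratio distortion bounds controlled by $\omega_{F''}$ and summed over disjoint images.

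\textbf{Step 2 rests on a false claim.} Your splitting $DF^{q_n}=(DF^{q_n-1}\circ F)\,DF$ is correct, and it shows that $\ln DF^{q_n}$ oscillates by at most $2\widetilde K_n^1$ on every $q_n$-small interval. But a zero of $DF^{q_n}-1$ need not lie in a $q_n$-small interval containing $x$, and $d'$ need not change sign on $(x,F^{q_{n-1}}x)$. Take $F=hR_\alpha h^{-1}$ with $h'=e^{\epsilon\cos2\pi x}/Z$, and set $\delta_n\defeq q_n\alpha-p_n$. Then exactly
\[
 \ln DF^{q_n}(h(x))=-2\epsilon\sin(\pi\delta_n)\sin(2\pi x+\pi\delta_n),
\]
which vanishes at only two points, while $q_n$-small intervals have length $O(\|q_{n-1}\alpha\|)\to0$. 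Your fallback chain therefore needs about $q_n$ links, so its constant grows with $n$, and Step 1 cannot absorb it.

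The missing idea is to move along the orbit rather than along the circle. Use the cocycle identity
\[
 \ln DF^{q_n}(F^jy)-\ln DF^{q_n}(y)=\ln DF^j(F^{q_n}y)-\ln DF^j(y),\qquad 0\le j<q_n,
\]
together with the fact that $(y,F^{q_n}y)$ is $q_n$-small. This shows that $\ln DF^{q_n}$ varies by at most $\widetilde K_n^1$ along $\{F^jx_0\}_{0\le j<q_n}$, where $x_0$ is any zero. The gaps of this orbit segment have rotation length $\|q_{n-1}\alpha\|$ or $\|q_{n-1}\alpha\|+\|q_n\alpha\|$, so every $x$ is joined to some $F^jx_0$ by a $q_n$-small arc. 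Combined with your splitting, this gives $K_n^0\le3\widetilde K_n^1$ under the convention $k<q_n$, and $K_n^0\le2\widetilde K_n^1$ if $k=q_n$ is admitted in the supremum. Either bound suffices for the summability argument in Section~\ref{sec:rigidity}; the paper itself takes the comparison from Katznelson--Ornstein's Lemma~1.2.
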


The inequality $K_n^0\le2\widetilde K_n^1$ is given in \cite[Lemma~1.2, p.~682, equation~1.1]{KO89}. For the derivation of the two limits under the $C^2$ assumption, see \cite[Lemmas~1.7 and 1.9, p.~648; Lemma~3.1, p.~652; Proposition~3.3, p.~653; Corollary~3.4, p.~655]{KO89D}.

\begin{proposition}
\label{appC:KO-recursion}
Let $F\in\mathcal F_\alpha^2$, where $\alpha$ is irrational, and let $\mathsf G_j$ and $\mathsf g_j$ be given by~\eqref{eq:dynamical-average} and~\eqref{eq:KO-averaging}, respectively. Fix $1<p<2$. There exist constants $C>0$ and $0<\eta\le\eta_1<1$, depending only on $F$ and $p$, such that, for every sufficiently small $\varepsilon>0$, there are an index $n_0\ge1$ and nonnegative sequences $(\mathsf r_n)_{n\ge n_0}$ and $(c_{nj})_{n\ge n_0,j\ge1}$ satisfying, for $n\ge n_0$,
\begin{equation}\label{appC:KO-recursion-estimate}
 \widetilde K_n^1
 \le C\varepsilon
       \sum_{n/2<m<n}\eta^{n-m}\widetilde K_m^1
      +\mathsf r_n+C\sum_{j\ge1}c_{nj}\|\mathsf g_j\|_p,
\end{equation}
where
\begin{equation}\label{appC:KO-recursion-remainder}
 \sum_{n\ge n_0}\mathsf r_n<\infty,
 \qquad
 0\le c_{nj}\le
 \begin{cases}
 n\eta^{n/2}, & j<n/2, \\
 \eta_1^{n-j}, & n/2\le j\le n, \\
 \eta_1^{j-n}, & j>n.
 \end{cases}
\end{equation}
The constants $C,\eta,\eta_1$ are independent of $\varepsilon$, whereas the index $n_0$ and the sum $\sum_{n\ge n_0}\mathsf r_n$ depend on $\varepsilon$.
\end{proposition}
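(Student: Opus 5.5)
The plan is to follow the martingale scheme of Katznelson and Ornstein~\cite{KO89}. We expand the distortion of a finite iterate on a $q_n$-small interval in terms of the increments $\mathsf g_j$ of the dynamical averages $\mathsf G_j=E_jg$, where $g=F''/F'=D\ln F'$, and then estimate the contribution of each level $j$ separately. For a $q_n$-small interval $I=(x,y)$ and $0\le k<q_n$ we have
\[
 \ln DF^k(y)-\ln DF^k(x)=\sum_{i=0}^{k-1}\int_{F^i(I)}g(t)\,dt,
\]
and the arcs $F^i(I)$, $0\le i<k$, are pairwise disjoint. Since $\sum_j\mathsf g_j$ converges uniformly to $g$, this sum can be rewritten as $\sum_{j\ge1}\Sigma_j$ with $\Sigma_j\defeq\sum_{i<k}\int_{F^i(I)}\mathsf g_j$. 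The bound~\eqref{appC:KO-recursion-estimate} will then come from sorting the levels into three ranges, $j>n$, $n/2\le j\le n$, and $j<n/2$. In each range we use the bounded geometry of the partitions $P_m(F)$: consecutive atoms have comparable lengths, with ratios controlled by $A$, and the mesh decays geometrically in $m$. For $F\in\mathcal F_\alpha^2$ this geometry is given by the Denjoy--Koksma and real a priori bounds, uniformly in $m$.

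\textbf{Fine levels $j>n$.} Here $\mathsf g_j$ has zero mean on every atom of $P_{j-1}(F)$, and these atoms are much finer than the arcs $F^i(I)$. So only the boundary atoms of each $F^i(I)$ contribute to $\Sigma_j$. The total measure of these boundary atoms is at most $C\,q_n\operatorname{mesh}P_{j-1}(F)\le C\eta_1^{\,j-n}$. Hölder's inequality with exponent $p$ then gives $|\Sigma_j|\le C\eta_1^{(j-n)/p'}\|\mathsf g_j\|_p$, where $p'$ is the dual exponent; after renaming $\eta_1$ this is the third case of~\eqref{appC:KO-recursion-remainder}.

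\textbf{Comparable levels $n/2\le j\le n$.} Here we apply Hölder directly on the disjoint union $\bigcup_iF^i(I)$. The function $\mathsf g_j$ is piecewise constant on $P_j(F)$, and this union meets $\sum_i|F^i(I)|$ worth of atoms. Combining this with the geometric comparison of $|F^i(I)|$ against the scale of $P_j(F)$, we aim for a bound of the form $\eta_1^{\,n-j}\|\mathsf g_j\|_p$. If the Hölder exponent is not absorbed cleanly, it is absorbed into the definition of $\eta_1$.

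\textbf{Coarse levels $j<n/2$.} This range is the main obstacle. On each atom of $P_j(F)$ the function $\mathsf g_j$ is constant, so $\Sigma_j$ is a weighted count of how the orbit arcs $F^i(I)$ distribute over these atoms. Lengths are transported along the orbit by $DF^i$. Comparing with the rigid rotation, where the count is exactly balanced because $\mathsf g_j$ has zero mean on each atom of $P_{j-1}$, the imbalance is governed by the distortion of $F^{q_m}$ on the intermediate return intervals, that is, by $\widetilde K_m^1$ for $n/2<m<n$. Concretely, I would write $DF^i$ on these returns as $\exp$ of the accumulated distortion and linearize the exponential. This is the step where Proposition~\ref{appC:KO-smallness} enters: it gives $\widetilde K_m^1\to0$, so there is an $n_0$ beyond which $\widetilde K_m^1\le\varepsilon$. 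The linearization then produces the convolution term $C\varepsilon\sum_{n/2<m<n}\eta^{n-m}\widetilde K_m^1$. The quadratic and higher-order remainders, together with the very coarse levels (weighted by $n\eta^{n/2}$), go into $\mathsf r_n$ and the first case for $c_{nj}$. Summability of $\mathsf r_n$ should follow from the exponential decay of the mesh together with the square summability of Proposition~\ref{appC:KO-square-summability}.

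I expect the delicate point to be keeping the constants $C,\eta,\eta_1$ independent of $\varepsilon$, so that only $n_0$ and $\sum_n\mathsf r_n$ depend on $\varepsilon$. Finally, taking the supremum over $0\le k<q_n$ and over all $q_n$-small $I$ yields~\eqref{appC:KO-recursion-estimate}.
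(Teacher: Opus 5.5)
The paper does not reprove this statement; it extracts the recursion from the proof of Theorem~3.1 in \cite{KO89}. Your outline has the same skeleton: martingale increments $\mathsf g_j$ of $g=F''/F'$ along the nested partitions $P_j(F)$, estimated level by level, with smallness from Proposition~\ref{appC:KO-smallness}. However, the level estimates you state do not yield \eqref{appC:KO-recursion-estimate}.

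\textbf{Fine levels $j>n$.} You bound the boundary atoms by $Cq_n\operatorname{mesh}P_{j-1}(F)\le C\eta_1^{j-n}$. At $j=n+1$ this asserts $\sup_n q_n\operatorname{mesh}P_n(F)<\infty$. For constant type that forces $h$ to be Lipschitz, which is false for the maps of Theorem~\ref{thm:main}. The a priori bounds give only relative contraction of nested atoms. So you must measure the level-$(j-1)$ boundary atoms against the level-$n$ atoms containing $F^i(x)$ and $F^i(y)$, which have bounded overlap. This step is repairable.

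\textbf{Comparable levels $n/2\le j\le n$.} Here the problem is not a slip. H\"older on $U\defeq\bigcup_{i<k}F^i(I)$ gives only $|\Sigma_j|\le|U|^{1/p'}\|\mathsf g_j\|_p$. Take $I$ to be the arc between $x$ and $F^{q_{n-1}}(x)$ and $k=q_n-1$; then $|U|$ is bounded below by a constant depending only on $F$. So there is no factor $\eta_1^{n-j}$, and you get $c_{nj}\asymp1$ on this band. Then $\sup_j\sum_n c_{nj}=\infty$, which is exactly the quantity Section~\ref{subsubsec:return-sum} needs to be finite. Decay in $n-j$ can only come from cancellation. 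Write $\Sigma_j=\sum_{B\in P_{j-1}(F)}\sum_{A\subset B}(\mathsf g_j|_A)\,|A|\,(\rho_A-\rho_B)$, with $\rho_A\defeq|A\cap U|/|A|$ and $\rho_B$ defined likewise. You must show that these densities are nearly constant on the children of each atom. That is already the distortion-dependent step, not a separate H\"older estimate.

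\textbf{Coarse levels and the factor $C\varepsilon$.} For $j\le n$, this distortion step is the core of the Katznelson--Ornstein argument, and your sketch does not supply it. Linearizing $DF^i=\exp(\ln DF^i)$ and using $\widetilde K_m^1\le\varepsilon$ for $m\ge n_0$ only makes the quadratic remainder $O(\varepsilon\widetilde K_m^1)$. You never isolate the first-order part of the density defect. You also do not show that it is bounded by the right-hand side of \eqref{appC:KO-recursion-estimate} with weights as in \eqref{appC:KO-recursion-remainder}. Nor do you explain why it carries a small coefficient, or why only $n/2<m<n$ survive. In the source the paper relies on, the factor $C\varepsilon$ comes from \eqref{appC:KO-return-smallness}, namely $\sup_I\int_I|D\ln DF^{q_n}|\to0$ over $q_n$-small $I$. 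This is used at a sufficiently large fixed return order \cite[eqs.~3.17--3.19]{KO89}. The terms with $m\le n/2$ are made summable by \cite[eq.~1.8]{KO89} and placed in $\mathsf r_n$, together with the exponentially decaying term of eq.~3.19.

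\textbf{Summability of $\mathsf r_n$.} Your route to $\sum_n\mathsf r_n<\infty$ also fails. Remainders quadratic in $\widetilde K_m^1$ are not controlled by Proposition~\ref{appC:KO-square-summability}, because $K_m^0\le2\widetilde K_m^1$ runs the wrong way. Moreover, \cite{KO89} derives that square summability from this very recursion, so invoking it here is circular.
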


The recurrence estimates~\eqref{appC:KO-recursion-estimate} and~\eqref{appC:KO-recursion-remainder} are given in \cite[proof of Theorem~3.1, pp.~688--689, equations~3.19--3.22]{KO89}. By \cite[p.~683, equation~1.8]{KO89}, the terms with $m\le n/2$ give a summable contribution, which is included in $\mathsf r_n$ together with the exponentially decaying term in equation~3.19. Combining~\eqref{appC:KO-return-smallness} with a sufficiently large fixed return order in \cite[p.~688, equations~3.17--3.19]{KO89} gives the small factor $C\varepsilon$. These estimates do not require a priori summability of $(\|\mathsf g_j\|_p)_{j\ge1}$.

\begin{proposition}
\label{appC:KO-C1-criterion}
Let $F\in\mathcal F_\alpha^2$, let $\alpha$ be an irrational number of constant type, and let $h$ satisfy~\eqref{eq:forward-conjugacy}. If $\sum_{n\ge1}K_n^0<\infty$, then $h\in \mathcal D^1(\T)$. If $F''\in C^\sigma(\T)$ (where $0<\sigma<1$), then there exist constants $C>0$ and $0<\lambda<1$ such that, for every $n\ge1$,
\begin{equation}\label{eq:KO-exponential-decay}
 K_n^0\le C\lambda^n.
\end{equation}
In particular, this summability condition holds for every $F\in\mathcal F_\alpha^{2+\sigma}$.
\end{proposition}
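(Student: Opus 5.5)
The two assertions are of different nature, and I would prove them separately. For the first one, assume $\sum_{n\ge1}K_n^0<\infty$ and follow the route of Subsection~\ref{subsubsec:continuous-derivative}. Proposition~\ref{appC:KO-interval-ratios} bounds the dyadic ratio defect at scale $2^{-n}$ by $\beta_n\defeq C\sum_{m\ge\ell(n)}\theta^{m-\ell(n)}K_m^0$. Each value $\ell(n)=m$ is attained by at most $L_A$ indices $n$, so exchanging the order of summation gives
\[
\sum_{n\ge n_*}\beta_n\le\frac{CL_A}{1-\theta}\sum_{m\ge1}K_m^0<\infty .
\]
With $v_n(x)\defeq2^n(h(x+2^{-n})-h(x))$, the identity $v_n=\tfrac12v_{n+1}(1+\xi_n)$ with $|\xi_n-1|\le\beta_{n+1}$ yields
\[
|v_{n+1}/v_n-1|\le\beta_{n+1}.
\]
Since $(\beta_n)$ is summable, the infinite product converges uniformly, so $c_F\le v_n\le C_F$ for all large $n$. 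Then $(v_n)$ is Cauchy in $C(\T)$, and its limit $v\ge c_F$ satisfies $\int_0^xv=h(x)-h(0)$. Hence $h\in\mathcal D^1(\T)$. This part is essentially bookkeeping.

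For the Hölder part, let $F''\in C^\sigma$ and put $g\defeq F''/F'$, which lies in $C^\sigma$ and has mean zero. The first step is bounded geometry. For constant type and $F\in\mathcal F_\alpha^2$, Denjoy's inequality together with $\widetilde K_n^1\to0$ (Proposition~\ref{appC:KO-smallness}) shows that adjacent atoms of $P_n(F)$ have comparable lengths. Consequently $\operatorname{mesh}P_n(F)\le C\lambda_0^n$ for some $\lambda_0<1$. The second step uses the Hölder continuity of $g$: oscillation on atoms gives
\[
\|\mathsf G_n-g\|_\infty\le C\operatorname{mesh}P_n(F)^\sigma\le C\lambda_0^{\sigma n},
\]
and therefore $\|\mathsf g_j\|_p\le C\lambda_0^{\sigma j}$.

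The third step feeds this exponential forcing into a recursion for the return distortion. The plan is to bound $\widetilde K_n^1$ by summing $\ln F'$ increments along the first $q_n$ iterates of a $q_n$-small interval. One decomposes via the nested partitions $P_m$, $m\le n$, and uses $\ln DF^{k}(y)-\ln DF^k(x)=\int_x^y\sum_{i<k}g(F^i t)DF^i(t)\,dt$. Replacing $g$ on each atom by its average $\mathsf G_m$ costs at most $C\lambda_0^{\sigma m}$ times the total length of the iterated intervals. The averaged parts telescope against the rotation combinatorics, up to errors proportional to earlier distortions. The expected outcome is a linear inequality of the form
\[
\widetilde K_n^1\le C\varepsilon\sum_{n/2<m<n}\eta^{n-m}\widetilde K_m^1+C\lambda_1^n ,
\]
i.e.\ Proposition~\ref{appC:KO-recursion} with $\mathsf r_n$ now exponentially small. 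A discrete Gronwall argument, choosing $\varepsilon$ with $C\varepsilon\eta/(1-\eta)<1/2$, then gives $\widetilde K_n^1\le C\lambda^n$. Finally $K_n^0\le2\widetilde K_n^1$ yields \eqref{eq:KO-exponential-decay}.

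The main obstacle is the third step, specifically the remainder $\mathsf r_n$. Proposition~\ref{appC:KO-recursion} only guarantees that $\mathsf r_n$ is summable, not that it decays exponentially. One must therefore reopen the source of $\mathsf r_n$: the terms with $m\le n/2$ and the exponentially decaying piece. The aim is to show that under the $C^\sigma$ hypothesis each of these is $O(\lambda^{n})$. My first attempt would be to bound the $m\le n/2$ terms by $\eta^{n/2}\sup_m\widetilde K_m^1$, using $\widetilde K_m^1\to0$. If that fails, I would fall back on a direct Khanin--Teplinsky style renormalization estimate $K_{n+1}^0\le\lambda K_n^0+C\lambda_0^{\sigma n}$.
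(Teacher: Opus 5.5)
Your argument for the first assertion is correct and coincides with the paper's own route in Subsection~\ref{subsubsec:continuous-derivative} (see Remark~\ref{rem:C1-criterion-connection}). Summability of $K_m^0$, together with the bounded multiplicity of $\ell(n)$ in Proposition~\ref{appC:KO-interval-ratios}, gives $\sum_n\beta_n<\infty$. The dyadic quotients $v_n$ then converge uniformly to a positive continuous limit. The paper also credits this assertion to \cite[Theorem~2.4]{KO89D}, but your argument needs nothing further.

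The second assertion has a genuine gap, and it is exactly the step you flag yourself. Your first two steps are fine: exponential decay of $\operatorname{mesh}P_n(F)$ already follows from Denjoy's inequality, and then $\|\mathsf g_j\|_p\le\|\mathsf g_j\|_\infty\le C\lambda_0^{\sigma j}$. You are also right that, given the shape of $c_{nj}$ in \eqref{appC:KO-recursion-remainder}, exponentially small $\|\mathsf g_j\|_p$ produce exponentially small forcing. Once that holds, comparison with $A\mu^n$, for $\mu<1$ close to $1$, finishes the argument. What is missing is the recursion itself with an exponentially small remainder,
\[
 \widetilde K_n^1\le C\varepsilon\sum_{n/2<m<n}\eta^{n-m}\widetilde K_m^1+C\lambda_1^n .
\]
Proposition~\ref{appC:KO-recursion} is available to you only as a black box, and its sole information on $\mathsf r_n$ is $\sum_{n\ge n_0}\mathsf r_n<\infty$, so no rate can be extracted from it.

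Your description of step three (replace $g$ by $\mathsf G_m$ and let the averaged parts ``telescope against the rotation combinatorics, up to errors proportional to earlier distortions'') summarizes the whole Katznelson--Ornstein argument rather than deriving it. Your ``first attempt'' presupposes a particular structure of $\mathsf r_n$: an $m\le n/2$ tail bounded by $\eta^{n/2}\sup_m\widetilde K_m^1$, plus an exponentially small term. You could only confirm that structure by redoing the proof of \cite[Theorem~3.1]{KO89}. The fallback inequality $K_{n+1}^0\le\lambda K_n^0+C\lambda_0^{\sigma n}$ is just the desired conclusion written recursively, and you give no mechanism that produces the contraction factor $\lambda<1$.

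For comparison, the paper does not prove \eqref{eq:KO-exponential-decay} either; it imports it from \cite[Theorem~3.7]{KO89D}. As Remark~\ref{rem:rigidity-methods} notes, that result rests on a direct recursion for nonlinear distortion on return intervals and holds for every irrational $\alpha$. It does not go through the $L^p$ absolute-continuity recursion of Proposition~\ref{appC:KO-recursion}. To turn your outline into a proof, you would have to establish such a return-interval distortion estimate under the $C^\sigma$ hypothesis on $F''$, or cite it (or a cross-ratio estimate in the spirit of \cite{KT09}). As written, the exponential decay is not proved, and neither is the final sentence of the proposition, which depends on it.
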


The first assertion is the conclusion of \cite[Theorem~2.4, pp.~650--651]{KO89D} under the constant-type assumption: bounded partial quotients make the weighted summability assumption equivalent to $\sum_{n\ge1}K_n^0<\infty$. The exponential decay estimate~\eqref{eq:KO-exponential-decay} is given in \cite[Theorem~3.7, p.~655]{KO89D} and does not depend on the constant-type assumption.

\par\medskip\noindent\textbf{Data availability.}
No datasets were generated or analyzed during the current study.
\par\medskip\noindent\textbf{Conflict of interest.}
The authors declare no conflict of interest.
\par\medskip\noindent\textbf{AI declaration.}
The authors used OpenAI's ChatGPT as an auxiliary tool for proofreading and improving the presentation of some arguments.
All mathematical arguments were independently verified by the authors, who take full
responsibility for the content.

\end{document}